\title[Geometries on tori]%
{Symmetries of holomorphic geometric structures on tori}
\date{April 13, 2010}
\author[S. Dumitrescu]{Sorin Dumitrescu}
\address{Universit\'e C\^ote d'Azur, Universit\'e  Nice Sophia Antipolis, UMR CNRS 7351, Laboratoire J.-A. Dieudonn\'e,   Parc Valrose, 06108 NICE Cedex 02, France}
\email{dumitres@unice.fr}
\author[B. McKay]{Benjamin McKay}
\address{University College Cork, Cork, Ireland}
\email{b.mckay@ucc.ie}
\keywords{locally homogeneous structures, complex tori}
\subjclass[2000]{Primary 53B21; Secondary 53C56, 53A55}
\date{\today}
\newtheorem{theorem}{Theorem}
\newtheorem{corollary}{Corollary}
\newtheorem{lemma}{Lemma}
\newtheorem{proposition}{Proposition}
\theoremstyle{remark}
\newtheorem{example}{Example}
\newcommand*{\pr}[1]{\ensuremath{\left(#1\right)}}
\newcommand*{\curly}[1]{\ensuremath{\left\{#1\right\}}}
\newcommand*{\of}[1]{\ensuremath{\!\pr{#1}}}
\newcommand*{\C}[1]{\ensuremath{\mathbb{C}^{#1}}}
\newcommand*{\Proj}[1]{\ensuremath{\mathbb{P}^{#1}}}
\newcommand*{\GL}[1]{\ensuremath{\operatorname{GL}\of{#1}}}
\newcommand*{\PSL}[1]{\ensuremath{\mathbb{P}\!\operatorname{SL}\of{#1}}}
\newcommand*{\Lm}[2]{\ensuremath{\Lambda^{#1}\of{#2}}}
\DeclareMathOperator{\Ad}{Ad}
\newcommand*{\map}[3][:]%
{\ensuremath{\IfStrEq{#1}{:}{}{{#1} \colon}{#2} \to {#3}}}
\newcommand*{\mapto}[3][:]%
{\ensuremath{\IfStrEq{#1}{:}{}{{#1} \colon}{#2} \mapsto {#3}}}
\newcommand*{\Hom}[2]{\ensuremath{\operatorname{Hom}\of{{#1},{#2}}}}
\newcommand*{\homotopyGroup}[2]%
{\ensuremath{\pi_{#1}\of{#2}}}
\newcommand*{\fundamentalGroup}[1]%
{\ensuremath{\homotopyGroup{1}{#1}}}
\newcommand*{\normalizer}[2]{\ensuremath{N_{#2}#1}}
\newcommand*{\centralizer}[2]{\ensuremath{Z_{#2}#1}}
\newcommand*{\defeq}{\mathrel{\vcenter{\baselineskip0.5ex \lineskiplimit0pt
                     \hbox{\scriptsize.}\hbox{\scriptsize.}}}%
                     =}
\newcommand*{\Lie}[1]{\ensuremath{\mathfrak{\lowercase{#1}}}}
\newcommand*{\MakeLie}[1]{\expandafter\def\csname Lie#1\endcsname{\Lie{#1}}}
\newcommand*{\Kaehler}{K\"ahler\xspace}
\newcommand*{\Aut}[1]{\ensuremath{\operatorname{Aut}\of{#1}}}
\def\lst{A,B,G,H,N,S,T,Z}
\lst\do{\expandafter\MakeLie \i}
\newcommand*{\dimC}[1]{\ensuremath{\dim_{\C{}} \! #1}}
\newcommand*{\alb}[1]{\ensuremath{\mathscr{A}_{#1}}}
\begin{document}
\begin{abstract}   
We prove  that any holomorphic locally homogeneous geometric structure on a complex torus of dimension two, modelled on a complex  homogeneous surface, is translation invariant.
We conjecture  that this result is true in any dimension. In higher dimension, we prove it for $G$ nilpotent. We also prove  that  for any given complex algebraic homogeneous space \((X,G)\),   the translation invariant \((X,G)\)-structures  on tori  form a union of connected components in the deformation space of \((X,G)\)-structures.
\end{abstract}

\maketitle
\tableofcontents

\section{Introduction}

We conjecture that holomorphic locally homogeneous geometric structures on complex tori are translation invariant.
Our motivation is from Ghys \cite{Ghys:1996}: holomorphic  nonsingular foliations of codimension one  on any complex torus admit a subtorus of symmetries of codimension at least one.
Let us briefly recall Ghys's  classification of holomorphic codimension one nonsingular foliations on complex tori. The simplest are those given by the kernel of some holomorphic 1-form 
\(\omega\). Since \(\omega\) is necessarily translation invariant on the complex  torus \(T\), the foliation will be also translation invariant.

Assume now that \(T=\C{n}/ \Lambda\), with \(\Lambda\) a lattice in \(\C{n}\) and there exists a linear form \(\pi \colon \C{n} \to \C{}\) sending \(\Lambda\) to a lattice \(\Lambda'\) in \(\C{}\). Then \(\pi\) descends
to a map \(\pi \colon T \to T'\defeq\C{}/ \Lambda'\). Pick a nonconstant meromorphic function \(u\) on the elliptic curve \(T'\) and consider the meromorphic closed 1-form \(\Omega=\pi^{*}(udz) + \omega\) on 
\(T\). It is easy to see that the foliation given by the kernel of \(\Omega\) extends to all of \(T\) as a nonsingular  holomorphic codimension one foliation. This foliation is not invariant by all translations in \(T\), but only by those which lie the kernel of \(\pi\). They act on \(T\) as a subtorus of symmetries of codimension one.

Ghys's theorem asserts that all nonsingular codimension one holomorphic  foliations on complex tori are constructed in this way. In particular, they are invariant by a subtorus of complex codimension one.
Moreover, for generic complex tori, there are no nonconstant meromorphic functions and, consequently, all holomorphic codimension one foliations on those tori  are translation invariant.

Our aim is to generalize Ghys's result to other holomorphic geometric structures  on complex tori and to   find the smallest amount of  symmetry  those geometric structures can have.
In particular, using a result from~\cite{Dumitrescu:2010b,Dumitrescu:2011} we prove here that on  complex tori with no nonconstant meromorphic functions (algebraic dimension zero), all holomorphic geometric structures are translation invariant.

Notice that  there are \emph{holomorphic Cartan geometries} (see section~\ref{section:cartan geometries} for the precise definition) or   holomorphic \emph{rigid geometric structures in Gromov's sense} (see~\cite{Dumitrescu:2011})  on complex tori which are not translation invariant:  just add any holomorphic affine structure (the standard one for example)  to one of the previous   holomorphic  foliations of Ghys. Then the subtorus of symmetries is of codimension one: all symmetries of the foliation preserve the affine structure.

There are holomorphic rigid geometric structures on complex tori without any symmetries. For example, any projective embedding  of an abelian variety into a complex projective space is a holomorphic rigid geometric structure. There  are  no symmetries: any symmetry preserves the fibers of the map; since the map is an embedding the only symmetry is the identity.

Nevertheless, we conjecture that all holomorphic  \emph{locally homogeneous} geometric structures on tori are translation invariant.  
Locally homogeneous geometric structures naturally arise in the following way.  Start with a holomorphic  rigid geometric structure in Gromov's sense  \(\phi\) on a complex manifold \(M\); think of  a holomorphic affine connection or of a holomorphic projective connection. Assume that the local  holomorphic vector fields preserving  \(\phi\) are transitive on the manifold: they span the holomorphic tangent bundle \(TM\) at each point \(m \in M\).  The rigidity  of \(\phi\)  implies that those local vector fields  form a finite dimensional Lie algebra (associated  to a  connected complex Lie group \(G\)). Under these assumptions, \(M\) is then  locally modelled on a complex \(G\)-homogeneous space \(X\)  and, consequently, we get a holomorphic  locally homogeneous geometric structure on $M$  locally modelled on \((X,G)\), also called a holomorphic \((X,G)\)-structure  (see the precise definition in section~\ref{section:Notations and main result}).

For various types of complex homogeneous spaces \((X,G)\), we develop some general techniques below to demonstrate  that all holomorphic \((X,G)\)-structures on complex tori are translation invariant. 
We will prove this below for all \((X,G)\) in complex dimension 1 and 2, by running through the classification of complex homogeneous surfaces following \cite{McKay:2012,McKay/Pokrovskiy:2010,McKay:2014}.
We also prove it for \(G\) nilpotent.

We further conjecture that holomorphic locally homogeneous geometric structures on compact quotients \(P/\pi\), with  \(\pi\) a discrete cocompact subgroup of a complex Lie group \(P\), lift to right invariant geometric structures on \(P\). This will be   proved here only  for those quotients which are of algebraic dimension zero. For \(P=SL(2,\C{})\) this  generalizes a result of Ghys about holomorphic tensors on \(SL(2,\C{})/\pi\)~\cite{Ghys:1995}.

Notice also that these results do not hold in the real analytic category.  For example, there are real analytic flat affine structures  on two dimensional real  tori  (constructed and classified by Nagano and Yagi) which are not translation invariant \cite{Benoist:1999}.

\section{Notation and main result}\label{section:Notations and main result}

A complex homogeneous space \((X,G)\) is a connected complex Lie group \(G\) acting transitively, effectively and holomorphically on a complex manifold \(X\).
A \emph{holomorphic \((X,G)\)-structure} (also known as a \emph{holomorphic locally homogeneous structure modelled on \((X,G)\)}) on a complex  manifold \(M\) is a maximal collection of local biholomorphisms of open subsets of \(M\) to open subsets of \(X\) (the \emph{charts} of the structure), so that any two charts differ by action of an element of \(G\), and every point of \(M\) lies in the domain of a chart.
Every holomorphic \((X,G)\)-structure on a complex manifold \(M\) has a \emph{developing map} \(\delta \colon \tilde{M} \to X\), a local biholomorphism on the universal covering space of \(M\), so that the composition of \(\delta\) with any local section of \(\tilde{M} \to M\) is a  chart of the structure. There is a unique developing map, up to post composition with elements in \(G\).
Any developing map is equivariant for a unique group morphism \(h \colon \fundamentalGroup{M} \to G\), the \emph{holonomy morphism} of the developing map. 
The reader can consult \cite{Goldman:2010} as a reference on locally homogeneous structures.

Throughout this paper, we take an \((X,G)\)-structure on a complex torus \(T=V/\Lambda\),  the quotient of \(V=\C{n}\)  with a lattice \(\Lambda \subset V\),  with holonomy morphism \(h \colon \Lambda \to G\) and developing map \(\delta \colon V \to X\).
Let \(x_0 \defeq \delta(0)\). 
Denote by \(H = G^{x_0}\subset G\) the stabilizer of the  point \(x_0 \in X\) and let \(n=\dimC X\).

Among interesting \((X,G)\)-structures on tori we note:
\[
\begin{array}{@{}rll@{}} \toprule
X & G & \text{\((X,G)\)-structure} \\ \midrule
\C{n} & \GL{n,\C{}} \ltimes \C{n} & \text{complex affine structure} \\
\mathbb{P}^n & PSL(n+1, \C{}) & \text{complex projective structure} \\
\C{n} & O(n, \C{}) \ltimes \C{n} & \text{flat holomorphic Riemannian metric} \\
\pr{\sum_i z_i^2=0} \subset \mathbb{P}^{n+1} & PO(n+2, \mathbb{C}) & \text{flat holomorphic conformal structure} \\ \bottomrule
\end{array}
\]

The main theorem of our article is the following:
\begin{theorem}    
Suppose that \((X,G)\) is a complex homogeneous curve or surface.
Then every holomorphic \((X,G)\)-structure on any complex torus is translation invariant.
\end{theorem}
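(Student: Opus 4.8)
The plan is to reformulate translation invariance in terms of the developing map, then to run through the classification of complex homogeneous curves and surfaces. The reformulation: the \((X,G)\)-structure on \(T=V/\Lambda\) is translation invariant precisely when, after postcomposing \(\delta\) with an element of \(G\), there is a holomorphic homomorphism \(\rho\colon V\to G\) with \(\delta(z)=\rho(z)\cdot x_0\) for all \(z\in V\); since \(V=\C n\) is simply connected and abelian, such a \(\rho\) equals \(\exp\circ r\) for a \(\C{}\)-linear \(r\colon V\to\LieG\) with \([r(z),r(w)]=0\), and automatically \(\rho|_\Lambda=h\). (The nontrivial implication uses effectiveness: if every translation \(\tau_v\) preserves the structure, then \(\delta\circ\tau_v=\rho(v)\circ\delta\) for a unique \(\rho(v)\in G\), and \(v\mapsto\rho(v)\) is a holomorphic homomorphism extending \(h\).) So the task, model by model, is to put \(\delta\) into this exponential normal form — equivalently, to show that every translation of \(T\) preserves the structure.

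I would dispatch the one-dimensional case directly. A complex homogeneous curve is \(\C{}\) (with \(G\subseteq\AffC1\)), \(\C{}\setminus\{0\}\) (with \(G=\C{}\setminus\{0\}\)), an elliptic curve (with \(G\) the curve), or \(\Proj1\) (with \(G=\PSL{2,\C{}}\)); in each case \(T\) is an elliptic curve and \(V=\C{}\). From \(\delta\) one forms a holomorphic differential invariant insensitive to the holonomy — the pullback form \(\delta^{*}dz\), the logarithmic derivative \(\delta''/\delta'\) of an affine developing map, \(\delta'/\delta\), or the Schwarzian \(S(\delta)\) — which descends to a holomorphic object on the elliptic curve, hence is constant since the canonical bundle is trivial; integrating the resulting constant-coefficient linear ordinary differential equation puts \(\delta\) into affine, exponential, or M\"obius-of-exponential form, all of the homomorphism type above.

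For surfaces, \(T\) is a two-torus, of algebraic dimension \(a(T)\in\{0,1,2\}\). If \(a(T)=0\), the assertion is the result recalled in the introduction and proved with \cite{Dumitrescu:2010b,Dumitrescu:2011}: every holomorphic rigid geometric structure — hence every \((X,G)\)-structure — on a torus with no nonconstant meromorphic function is translation invariant. So I may assume \(a(T)\ge1\), and then work case by case through the classification of complex homogeneous surfaces \cite{McKay:2012,McKay/Pokrovskiy:2010,McKay:2014}. For the \emph{tensorial} models — flat holomorphic affine, projective, Riemannian and conformal structures on \(\C2\), and their variants — a holomorphic affine connection, and any auxiliary holomorphic tensor, on \(T\) has constant coefficients because holomorphic tensors on \(T\) are translation invariant, and a holomorphic projective or conformal structure differs from a translation-invariant one by such a constant tensor; in either case translation invariance follows at once. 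For the \emph{split} models — products of homogeneous curves, \(\Proj1\times\Proj1\), \(\Proj1\times\C{}\), \(\Proj1\times(\C{}\setminus\{0\})\), and the like — the \(G\)-equivariant projections reduce \(\delta\) to developing maps of structures on factors or on transversal curves, to which the previous paragraph applies; the only subtlety is to control the holomorphic foliations that arise, which one does with Ghys's theorem (equivalently, using the meromorphic functions of \(T\), since the conormal bundle of such a foliation embeds into the trivial bundle \(\Omega^1_T\)). For the remaining models, the classification supplies a \(G\)-equivariant holomorphic fibration \(\varpi\colon X\to Y\) over a homogeneous curve (or a point); then \(\varpi\circ\delta\colon V\to Y\) is a submersion equivariant for the induced holonomy, and its fibres descend to a nonsingular holomorphic codimension-one foliation of \(T\), which by Ghys's theorem is invariant under a codimension-one subtorus. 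Combining the curve analysis — applied both to a transversal elliptic curve in \(T\) and along this foliation — with the structure of the kernel of \(G\to\Aut Y\) then pins \(\delta\) down to the required form.

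The hard part will be the genuinely twisted homogeneous surfaces, for which \(\varpi\colon X\to Y\) admits no \(G\)-equivariant section — for instance \(\C2\setminus\{0\}\) under \(\GL{2,\C{}}\) (with its cyclic quotients, the models built from line bundles over \(\Proj1\)) and the affine quadric surface under \(\SO{3,\C{}}\). For these there is no global fibre coordinate in which to ``take logarithms''; moreover one must check that the exponential normal form obtained for \(\delta\) is genuinely \(G\)-equivariant rather than merely \(\Lambda\)-equivariant, i.e.\ that the one-parameter subgroups that occur actually lie in \(G\), not just their restrictions to \(\Lambda\). I would handle this by lifting \(\delta\) to the appropriate linear or line-bundle cover, so that its components become entire functions on \(V\) that are \(\Lambda\)-quasiperiodic with a common multiplier system and never vanish simultaneously; the Appell--Humbert classification of line bundles on \(T\) — equivalently, the facts that such an equivariant map is a section of a flat holomorphic bundle on \(T\) and that \(\cohomology0{T,\Omega^1_T}\) consists of constant forms — forces each component to be an exponential of an affine-linear function, and inspecting the short list of admissible groups \(G\) confirms that the resulting homomorphism \(V\to G\) does land in \(G\). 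Carried out over every entry of the classification, together with the curve case and the \(a(T)=0\) reduction, this proves the theorem.
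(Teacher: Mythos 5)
Your reformulation of translation invariance (the developing map is \(v\mapsto \rho(v)x_0\) for a holomorphic homomorphism \(\rho\colon V\to G\) extending \(h\)) is exactly the paper's Lemma on the algebraic description of the symmetry group, and your overall route — normalize the developing map model by model — is genuinely different from the paper's. The paper almost never touches the developing map directly for surfaces; it instead assembles general structural results (positive-dimensional symmetry group via the finite-holonomy analysis, the discrete-stabilizer lemma \(\dimC X=\dimC G\), the normalizer-chain argument for nilpotent \(G\), enlargement of the model, the algebraic-dimension-zero theorem combined with the fact that translation-invariant structures are both open and closed in the deformation space, and the reductive/parabolic Cartan-geometry theorems) and checks that every entry of the classification is caught by at least one of them. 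Where your computations are actually carried out they are correct and in places more elementary: the curve case via \(\delta''/\delta'\) and the Schwarzian, the affine/projective models via constancy of holomorphic tensors on tori, and the linear-cover argument for \(\C{2}\setminus\{0\}\) under \(\GL{2,\C{}}\) (a nonzero section of a flat line bundle on a torus has \(c_1=0\), hence empty divisor, hence is \(e^{\ell}\) with \(\ell\) affine) all work and replace the paper's heavier machinery.

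But as written the proposal has genuine gaps. First, your \(a(T)=0\) branch invokes the local-homogeneity theorem of \cite{Dumitrescu:2010b,Dumitrescu:2011}, which applies to rigid geometric structures, equivalently to \((X,G)\)-structures with \emph{algebraic} model; a number of the homogeneous surfaces in the classification are transcendental (e.g.\ the families with algebraically independent weights), and for those this reduction needs the model-enlargement and lifting lemmas that the paper proves — you cannot simply cite the result. Second, for the fibred models with \(a(T)\ge 1\), Ghys's theorem gives a codimension-one subtorus of symmetries of the induced \emph{foliation}, not of the \((X,G)\)-structure — the paper's introduction exhibits exactly this phenomenon (an affine structure plus a Ghys foliation, where the foliation has strictly more symmetries than the structure) — so the step ``combining the curve analysis along the foliation with the kernel of \(G\to\Aut Y\) pins \(\delta\) down'' is where essentially all of the remaining work lives, and it is only announced. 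The same is true of the twisted cases: the claim that the one-parameter subgroups produced by the normal form actually lie in \(G\) (rather than merely matching \(h\) on \(\Lambda\)) is the crux there and is asserted, not proved. The strategy is plausible and I believe it can be completed, but in its present form it is a plan for a proof of the surface case rather than a proof.
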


For tori $T$ of higher dimension, we prove that the translation invariant  \((X,G)\)-structures on $T$ form a union of connected components in the deformation space of  \((X,G)\)-structures (see Theorem \ref{theorem:translation invariant.deformation}).

\section{The symmetry group}\label{section:symmetries}

Suppose that \(M\) is a manifold with an \((X,G)\)-structure with developing map \(\delta \colon \tilde{M} \to X\) and holonomy morphism \(h \colon \fundamentalGroup{M} \to G\).
Let \(Z_{\tilde{M}}\) be the group of all pairs \((f,g)\) so that \(f \colon \tilde{M} \to \tilde{M}\) is a diffeomorphism equivariant for some automorphism \(a \in \Aut{\fundamentalGroup{M}}\), i.e. \(f \circ \gamma = a(\gamma) \circ f\) with quotient \(\bar{f} \colon M \to M\) and \(g \in G\) and \(\delta \circ f = g \delta\).
Let \(f_* \defeq a\). We can see the fundamental group as a  discrete normal subgroup of \(Z_{\tilde{M}}\)  through the map  \(\gamma \in \fundamentalGroup{M} \mapsto \pr{\gamma,h\of{\gamma}} \in Z_{\tilde{M}}\).

The automorphism group of the \((X,G)\)-structure is the quotient 
\[Z_M \defeq Z_{\tilde{M}}/\fundamentalGroup{M}.\] 
Thus, a diffeomorphism of \(M\) is an automorphism just when, lifted to \(\tilde{M}\), it reads through the developing map \(\delta\)  as an element of \(G\).

We extend the holonomy morphism from  \(\fundamentalGroup{M}\)  to \(Z_{\tilde{M}}\)  by  defining  \(h \colon Z_{\tilde{M}} \to G\), \(h(f,g) \defeq g\).

Let \(Z_X \subset G\) be the image of this extended \(h\).  
The universal covering map \(\tilde{M} \to M\) is equivariant for \(Z_{\tilde{M}} \to Z_M\), while the developing map \(\delta \colon \tilde{M} \to X\) is equivariant for \(Z_{\tilde{M}} \to Z_X\).

The notation \(Z_X\) is explained by the following lemma.
\begin{lemma} \label{lemma:centralizer}
The identity component \(Z_X^0 \subset Z_X\) is the identity component of the centralizer \(\centralizer{h\of{\pi}}{G}\) in \(G\) of the image of \(\pi \defeq \fundamentalGroup{M}\).
\end{lemma}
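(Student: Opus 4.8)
The plan is to prove the two inclusions $Z_X^0 \subset \left(\centralizer{h(\pi)}{G}\right)^0$ and $\left(\centralizer{h(\pi)}{G}\right)^0 \subset Z_X^0$ separately, exploiting the equivariance of the developing map.

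For the first inclusion, I would take an element $(f,g) \in Z_{\tilde M}$, so that $\delta \circ f = g\delta$ and $f \circ \gamma = f_*(\gamma) \circ f$ for all $\gamma \in \pi$. Applying $\delta$ to both sides of the latter and using equivariance of $\delta$ under the holonomy morphism $h \colon \pi \to G$, one gets $h(\gamma)\, g\, \delta = h(f_*(\gamma))\, g\, \delta$ on all of $\tilde M$; since $\delta$ is a local biholomorphism its image is open, and an element of $G$ acting trivially on a nonempty open subset of $X$ acts trivially on $X$ (as $G$ acts effectively and holomorphically, hence the action is determined by its restriction to any open set), so $g^{-1} h(f_*(\gamma)) g = h(\gamma)$ for all $\gamma$. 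Thus conjugation by $g$ induces, via $h$, the automorphism $f_*$ of the image $h(\pi)$. Now if $(f,g)$ lies in the identity component $Z_{\tilde M}^0$, then $f_*$ is in the identity component of $\Aut{\pi}$; but $\pi = \fundamentalGroup{T} \cong \Z{2n}$ is discrete, so $\Aut{\pi}$ is discrete and $f_* = \mathrm{id}$. Hence $g$ centralizes $h(\pi)$, giving $Z_X^0 \subset \centralizer{h(\pi)}{G}$, and since $Z_X^0$ is connected, $Z_X^0 \subset \left(\centralizer{h(\pi)}{G}\right)^0$.

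For the reverse inclusion, I would start with $g \in \left(\centralizer{h(\pi)}{G}\right)^0$ and try to produce a lift $f \colon \tilde M = V \to V$ with $\delta \circ f = g\delta$ and $(f,g) \in Z_{\tilde M}^0$. The idea is that because $g$ centralizes the holonomy, the map $g \circ \delta \colon V \to X$ is again a developing map for \emph{the same} $(X,G)$-structure: it is a local biholomorphism, and it is equivariant under the same holonomy morphism $\gamma \mapsto h(\gamma)$ precisely because $g$ commutes with every $h(\gamma)$. Two developing maps of the same structure differ by a deck-transformation-type symmetry: more precisely, I would use the classical fact (referenced via \cite{Goldman:2010}) that the developing map is unique up to post-composition with $G$, together with the identification of $Z_{\tilde M}$ as exactly the group of symmetries realizing such ambiguities, to conclude there is $f$ with $\delta \circ f = g \delta$ and $f$ equivariant for the identity automorphism of $\pi$ (equivariance for $\mathrm{id}$ because $g$ commutes with the holonomy, not merely conjugates it). Then $(f,g) \in Z_{\tilde M}$ with $h(f,g) = g$, so $g \in Z_X$. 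Finally, running this construction along a path $g_t$ in $\left(\centralizer{h(\pi)}{G}\right)^0$ from $e$ to $g$ yields a path $(f_t,g_t)$ in $Z_{\tilde M}$ (the $f_t$ depending continuously on $g_t$), showing $g \in Z_X^0$.

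The main obstacle is the reverse inclusion, specifically the existence and continuity of the lift $f$: one must argue that post-composing the developing map with a holonomy-centralizing $g$ genuinely comes from a symmetry of the universal cover defined over all of $V$ (not just locally), and that this symmetry varies continuously — equivalently holomorphically — with $g$. On a torus this is clean because $V = \C{n}$ is simply connected and the developing map is defined on all of $V$, so the monodromy obstruction to globalizing the local biholomorphism $\delta^{-1} \circ g \circ \delta$ vanishes; I would spell this out by analytic continuation of the germ of $\delta^{-1} \circ g \circ \delta$ along paths in $V$, using simple connectivity to get a well-defined global $f$, and then the continuous dependence on $g$ follows from uniqueness of the continuation. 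Once the lift is produced the rest is bookkeeping with the definitions of $Z_{\tilde M}$, $Z_M$, $Z_X$ and the extended holonomy $h(f,g) = g$.
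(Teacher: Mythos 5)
Your first inclusion is essentially the paper's own argument and is correct: on the identity component of \(Z_{\tilde M}\) the induced automorphism \(f_*\) is forced to be trivial because \(\Aut{\fundamentalGroup{M}}\) is discrete, and equivariance of \(\delta\) together with effectiveness of the \(G\)-action on the connected manifold \(X\) converts \(f\circ\gamma=\gamma\circ f\) into \(g\,h(\gamma)=h(\gamma)\,g\). (Minor slip: the intermediate identity should read \(g\,h(\gamma)\,\delta=h\of{f_*(\gamma)}\,g\,\delta\), but your conclusion \(g^{-1}h\of{f_*(\gamma)}g=h(\gamma)\) is the right one.)

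The reverse inclusion is where you depart from the paper, and the step you yourself flag as the main obstacle is genuinely gapped. Simple connectivity of \(V\) does not make the germ \(\delta^{-1}\circ g\circ\delta\) continue analytically over all of \(V\): the monodromy theorem says that \emph{if} a germ admits continuation along every path in a simply connected domain \emph{then} the continuation is single-valued; it provides no existence. Existence is precisely the issue, since \(\delta\) is only a local biholomorphism, not a covering map onto its image, so nothing yet guarantees that \(g\) carries \(\delta(V)\) into \(\delta(V)\) or that the relevant paths in \(X\) lift through \(\delta\). A toy example showing the argument cannot be purely formal: let \(X=G=\C{}\) act by translations and let \(M\) be a disk with the tautological structure; then \(\tilde{M}=M\) is simply connected, \(\delta\) is globally defined, \(h(\pi)\) is trivial so \(\centralizer{h(\pi)}{G}=\C{}\), and yet \(Z_X=\curly{0}\) --- the germ \(v\mapsto v+g\) fails to continue to points near the boundary. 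Hence compactness of the torus must enter somewhere, and your write-up never uses it. The paper supplies the missing ingredient at the Lie algebra level: each \(z\) in the Lie algebra of \(\centralizer{h(\pi)}{G}\) pulls back under \(\delta\) to a vector field on \(V\) that is invariant under the deck group \(\Lambda\) (because \(z\) commutes with \(h(\Lambda)\)), hence descends to the compact torus and is therefore complete (on a torus it is even a constant, i.e.\ translation, field); integrating these complete flows yields one-parameter subgroups \(\pr{f_t,\exp(tz)}\) of \(Z_{\tilde M}^0\) and hence the inclusion of the identity component of the centralizer into \(Z_X^0\). Your group-level continuation can be repaired, but only by first establishing this descent/completeness step --- that is, by running the paper's argument.
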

\begin{proof}
Let \(Z \defeq \centralizer{h\of{\pi}}{G}\).
For any \((f,g)\) in the identity component \(Z_{\tilde{M}}^0\) of \(Z_{\tilde{M}}\), \(f_*\) is the identity. Since the action of \(Z_{\tilde{M}}^0\) on \(\tilde{M} \) commutes with the action of \( \pi_1(M) \) and \(\delta\) is equivariant, \(Z_X^0=h(Z_{\tilde{M}}^0)\) commutes with \(h(\pi)\). Hence,  \(Z_X^0\) lies in the centralizer \(Z\).

Conversely, every vector field \(z \) in the Lie algebra of \(Z\) is a complete vector field on \(X\) commuting with \(h\of{\pi}\). 
Pull back  \(z\) by \(\delta\) to become a  complete vector field on \(\tilde{M}\), invariant  under the action of  \(\pi\) by deck transformations, i.e. the vector field descends to \(M\).  The corresponding flow is an one-parameter subgroup  in  \(Z_{\tilde{M}}^0\) whose image under \(h\) is the flow of \(z\). 
Therefore the identity component of \(Z\) lies in  \(Z_X^0=h(Z_{\tilde{M}}^0)\).
\end{proof}


For a complex homogeneous space \((X,G)\), let \(\bar{G}\) be the closure of \(G\) in the biholomorphism group of \(X\), in the topology of uniform convergence on compact sets.
A complex homogeneous space \((X,G)\) is \emph{full} if \(G=\bar{G}\).
If \(\bar{G}\) is a complex Lie group acting holomorphically on \(X\), then the complex homogeneous space \(\pr{X,\bar{G}}\) is the \emph{fulfillment} of \((X,G)\).
It turns out that every complex homogeneous space of complex dimension 1 or 2 has a fulfillment, i.e. \(\bar{G}\) is a finite dimensional complex Lie group acting holomorphically on \(X\).
It does not appear to be known if there are complex homogeneous spaces which do not have a fulfillment.
A locally homogeneous structure is \emph{full} if its model is full.

Take complex numbers \(\lambda_1, \lambda_2, \dots, \lambda_n\), algebraically independent, define the diagonal matrix
\[
A=
\begin{pmatrix}
\lambda_1 \\
 & \lambda_2 \\
 & & \ddots \\
 & & & \lambda_n
\end{pmatrix}
\]
and consider the morphism of complex Lie groups
\[
\mu \colon z \in \C{} \mapsto e^{z A} \in \C{*n}.
\]
Take a faithful representation \(\rho \colon \C{*n} \to \GL{N,\C{}}\), and let \(X=\C{N}\) and \(G \defeq \C{N} \rtimes \C{}\) with action \((v,z)x=\rho \circ \mu(z)x+v\).
Then \((X,G)\) is a complex homogeneous space with fulfillment \(\pr{X,\bar{G}}\) where \(\bar{G}=\C{N} \rtimes \C{*n}\).

\begin{lemma}
Every algebraic homogeneous space \((X,G)\) is full.
\end{lemma}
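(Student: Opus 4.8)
Let $(X,G)$ be an algebraic homogeneous space. We must show that every biholomorphism of $X$ that is a uniform-on-compacts limit of elements of $G$ already lies in $G$; equivalently, $\bar G = G$. The plan is to exploit algebraicity in two ways: first, that $G$ itself is an algebraic group and $X=G/H$ an algebraic variety, so $X$ sits as a Zariski-open subset of a projective variety $\bar X$ on which a larger algebraic group may act; and second, that a biholomorphism obtained as a limit of algebraic automorphisms inherits enough ``algebraic'' behaviour to be recognized inside $G$. The cleanest route I would take: show that $\bar G$ acts holomorphically on $X$ as a \emph{finite-dimensional} complex Lie group (so that $(X,\bar G)$ is a complex homogeneous space and the fulfillment exists), and then show $\bar G$ acts algebraically, whence $\bar G \subseteq \Aut_{\text{alg}}(X)$; finally use transitivity of $G$ together with a stabilizer argument to force $\bar G = G$.

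**Key steps, in order.** (1) Reduce to the case where $G$ is connected (given) and identify $X = G/H$ with $H$ the stabilizer of $x_0$; since $(X,G)$ is algebraic, $H$ is an algebraic subgroup. (2) Show $\bar G$ is a Lie group acting holomorphically: the subtle point is that a priori $\bar G$ is just a topological group of biholomorphisms. Here I would invoke the fact that $G$ acts transitively, so any $g \in \bar G$ is determined by its value and finitely many derivatives at one point $x_0$ — indeed by where it sends $x_0$ and the induced map on a jet, because $G \to X$ has a local section and the action is real-analytic; a limit of such maps is again constrained to a finite-dimensional space of jets, and standard arguments (as in the theory of transformation groups, cf.\ the discussion preceding the lemma) give that $\bar G$ is a finite-dimensional Lie group. (3) Embed $X$ equivariantly as a Zariski-dense open subset of a projective $G$-variety $\bar X$; a biholomorphism of $X$ that is a limit of elements of $G \subset \Aut_{\text{alg}}(\bar X)$ extends to a birational self-map of $\bar X$, and being a limit of \emph{regular} automorphisms it extends to a biregular automorphism of $\bar X$ preserving $X$. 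Hence $\bar G \subseteq \Aut_{\text{alg}}(X)$, which is an algebraic group containing $G$. (4) Now $\bar G$ is a connected algebraic group acting on $X = G/H$ transitively (since $G$ already is), so $\bar G = G \cdot \bar G^{x_0}$ where $\bar G^{x_0} \supseteq H$ is an algebraic subgroup; it remains to see $\bar G^{x_0} = H$, i.e.\ that the limit maps fixing $x_0$ are trivial. This follows because an element of $\bar G$ fixing $x_0$ and inducing the identity on all jets at $x_0$ must be the identity (by the transitivity and real-analyticity), and a limit of elements of $G$ fixing $x_0$ has the same $\infty$-jet at $x_0$ as some element of $H$ — so after composing with that element of $H$ it is the identity.

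**Main obstacle.** The delicate step is (2)–(3): controlling the nature of $\bar G$. Proving $\bar G$ is finite-dimensional is exactly the issue flagged in the paragraph above (``It does not appear to be known if there are complex homogeneous spaces which do not have a fulfillment''), so the proof must genuinely use algebraicity and not merely homogeneity. The argument I would push is that the $G$-action on the algebraic variety $X$ can be linearized: there is a faithful algebraic representation and an equivariant locally closed embedding of $X$ into projective space, so that $G$ lands in $\operatorname{PGL}$ and a limit of such maps lands in the closure of $G$ inside $\operatorname{PGL}$, which is an algebraic (hence finite-dimensional) group by Chevalley's theorem on closures of images of algebraic groups. Thus $\bar G$ is squeezed between $G$ and an algebraic group, and one is reduced to the purely algebraic statement that a closed (in the classical topology) subgroup of an algebraic group containing a transitive subgroup $G$ of $\Aut_{\text{alg}}(X)$, all of whose elements fix jets compatibly with $G$, coincides with $G$ — which step (4) handles. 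I would write the details of (3) carefully, since that is where ``algebraic'' is doing all the work.
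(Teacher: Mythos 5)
Your overall strategy---linearize the $G$-action and use algebraicity to show that the image of $G$ in the relevant automorphism group is closed---is in the same spirit as the paper's proof, but there is a genuine gap at the central step, and it sits exactly where the content of the lemma lies. In step (3) you assert that a biholomorphism $g$ of $X$ obtained as a uniform-on-compacts limit of $g_n\in G$ ``extends to a birational self-map of $\bar X$, and being a limit of regular automorphisms it extends to a biregular automorphism of $\bar X$.'' No argument is given, and none of this is automatic: convergence of the restrictions $g_n|_X$ on compact subsets of $X$ does not a priori imply convergence of the $g_n$ as points of $G$, of $\operatorname{PGL}$, or of any bounded algebraic family --- the sequence can escape to infinity in the group while its restrictions to $X$ still converge. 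Ruling this out \emph{is} the lemma; the paper's example $X=\C{N}$, $G=\C{N}\rtimes\C{}$ acting through $z\mapsto e^{zA}$ shows that for non-algebraic actions this escape genuinely occurs, so a proof must pinpoint where algebraicity forbids it. The same circularity infects step (2), where you invoke ``standard arguments'' from transformation group theory to conclude that $\bar G$ is a finite-dimensional Lie group: as the paper remarks just before the lemma, it is not even known whether every complex homogeneous space has a fulfillment, so no such general argument exists, and the finite-dimensionality has to come out of the very step (3) that is missing. Step (4) has a smaller gap of the same nature: the claim that a limit of elements of $G$ fixing $x_0$ ``has the same $\infty$-jet at $x_0$ as some element of $H$'' is again an unproved closedness-of-image assertion.

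For comparison, the paper closes this gap by lifting the action to the bundle $X^{(k)}$ of $k$-jets of coordinate charts: by Baouendi--Rothschild--Winkelmann--Zaitsev the lifted action is \emph{free} for $k$ large (using that stabilizers of algebraic actions have finitely many components), freeness forces all orbits to have equal dimension, and for an algebraic action the orbits of minimal dimension are Zariski closed --- hence every orbit in $X^{(k)}$ is a closed subvariety isomorphic to $G$. A limit $g$ of $g_n$ acts on $X^{(k)}$, so $g_n\cdot p$ converges inside the closed orbit $G\cdot p\cong G$, which is precisely what produces convergence of $g_n$ in $G$ and identifies $g$ with an element of $G$. If you prefer to salvage the projective linearization, you would need (i) an equivariant locally closed embedding $X\hookrightarrow\Proj{N}$ (a Sumihiro-type equivariant completion), (ii) an argument that the $g_n$, represented by norm-one matrices, cannot subconverge to a singular matrix --- for instance because the limit would then collapse a nonempty open subset of $X$ into a proper linear section, contradicting that $g$ is an open map --- and (iii) the observation that the image of $G$ in $\operatorname{PGL}(N+1,\C{})$ is a constructible, hence Zariski closed, hence classically closed subgroup. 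Your proposal contains only a version of (iii); (ii) is the analogue of the paper's closed-orbit argument and is the essential missing piece.
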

\begin{proof}
Let \(X^{(k)}\) be the set of all \(k\)-jets of local holomorphic coordinate charts on \(X\) (also called the $k$-frame bundle of $X$). The $G$-action on $X$ lifts to a holomorphic $G$-action on 
\(X^{(k)}\).
For sufficiently large \(k\), every orbit of \(G\) on \(X^{(k)}\) is a holomorphic immersion of \(G\) as a complex submanifold of \(X^{(k)}\) \cite{Baouendi/Rothschild/Winkelmann/Zaitsev:2004} p. 4, Thm. 2.4. In other words this result asserts that  the $G$-action is free on  \(X^{(k)}\) (for  $k$ large enough) as soon as  the stabilizer of any  point  in $X$ has finitely many connected components (which is always  true for algebraic actions).

Moreover,  the  \(G\)-action being supposed  algebraic,  all $G$-orbits of minimal dimension  in \(X^{(k)}\)  are closed in the algebraic Zariski topology \cite{Springer:2009} p. 28 Lemma 2.3.3 (ii). Since  the $G$-action is free on  \(X^{(k)}\), all orbits have the same dimension and they are all closed in the algebraic Zariski topology of \(X^{(k)}\).

Therefore the orbits of the  \(G\)-action on  \(X^{(k)}\)  define an algebraic  foliation  on  \(X^{(k)}\) such that each leaf is an embedded algebraic subvariety biholomorphic to  \(G\). Recall now that the Lie group \(G\)
has its Maurer-Cartan 1-form with values in its Lie algebra (defining a holomorphic parallelization of the holomorphic  tangent bundle of \(G\)) and elements of \(G\) are precisely those biholomorphisms  of \(G\) preserving the Maurer-Cartan 1-form.

Hence the cotangent sheaf of our algebraic  foliation on  \(X^{(k)}\)  admits a global holomorphic section with values in the Lie algebra of   \(G\) which coincides on each  leaf with the Maurer-Cartan 1-form.
Elements in  \(G\) are  precisely those  biholomorphisms  of \(X\) which (when lifted to \(X^{(k)}\))   preserve a leaf (and hence all leafs) of this foliation and its Maurer-Cartan 1-form. Consequently, \(G\) is a closed subgroup in the biholomorphism group of \(X\), for  the topology of uniform convergence on compact sets.
\end{proof}


%
%
%


The elements of \(Z_M\) are precisely the diffeomorphisms of \(M\) that lift to elements of \(Z_{\tilde{M}}\).
If the model \((X,G)\) is full then 
the groups \(Z_{\tilde{M}}, Z_X\) and \(Z_M\) are closed subgroups of the appropriate biholomorphism groups, as limits of symmetries are symmetries.
Even if the model is not full, \(Z_X^0\) is a closed subgroup of the biholomorphism group by lemma~\vref{lemma:centralizer}.
Denote by \(\LieZ\) the Lie algebra of \(Z_X \subset G\), which is also the Lie algebra of \(Z_M\) and of \(Z_{\tilde{M}}\).

For  a complex torus \(T=V/ \Lambda \), the previous notations become  \(Z_M =Z_T\)  and \(Z_{\tilde{M}}=Z_V\). Every element \(\gamma=(f,g) \in Z_V\) has \(f \colon V \to V\) dropping to an automorphism of the torus, so \(f(v)=b+av\) where \(b \in V\) and \(a=f_* \in \GL{V}\) and \(a \Lambda=\Lambda\).
 As above, the holonomy morphism  \(h\) extends from the fundamental group \(\Lambda\)  to a complex Lie group morphism \(h \colon Z_V \to Z_X\) so that \(\delta(\gamma v)=h(\gamma)\delta(v)\), for all \(v \in V\) and \(\gamma \in Z_V\).

The following lemma characterizes those \((X,G)\)-structures on complex tori which are translation invariant.

\begin{lemma}\label{lemma:algebraic.description}
For any complex homogeneous space \((X,G)\) and any holomorphic \((X,G)\)-structure on a complex torus \(T\), with notation as above, the identity component \(Z^0_X\) of \(Z_X \) is an abelian group, acting locally freely on the image of the developing map.
The Lie algebra \(\LieZ \subset V\) is a complex linear subspace acting on \(V\) by translations.
The subgroup \(Z_X \subset G\) is a closed complex subgroup and has a finite index subgroup lying inside \(\centralizer{h\of{\Lambda}}{G}\).
If \((X,G)\) is full then the identity component of the Lie group \(Z_T\) is  a subtorus \(Z_T^0 \subset T\) covered by  \(Z_V^0=\LieZ\).
The following are equivalent:
\begin{enumerate}
\item
The \((X,G)\)-structure on \(T\) is translation invariant.
\item
\(\dimC \LieZ=\dimC T\).
\item
The morphism \(h\) extends to a Lie group morphism \(h \colon V \to G\) for which the developing map is equivariant.
\item
There is a morphism of complex Lie groups \(h \colon V \to G\) with image transverse to \(H\) so that the local biholomorphism \(\delta \colon v \in V \mapsto h(v)x_0 \in X\) is the developing map.
\end{enumerate}
\end{lemma}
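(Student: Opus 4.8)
The plan is to establish the structural assertions first, then close the loop (1)–(4). For the first claim, I would start from the fact — from Lemma~\ref{lemma:centralizer} — that $Z_X^0$ is the identity component of $\centralizer{h\of{\Lambda}}{G}$; in particular a finite index subgroup of $Z_X$ lies inside $\centralizer{h\of{\Lambda}}{G}$ once we show $Z_X/Z_X^0$ is finite, which follows because $Z_T = Z_V/\Lambda$ acts on the compact torus $T$ through automorphisms $f(v)=b+av$ with $a\Lambda=\Lambda$, so the component group injects into $\GL{\Lambda}\cong\GL{n,\Z{}}$ acting on a lattice while preserving additional finite data, hence is finite (this uses that symmetries with trivial linear part and trivial translation part that are connected to the identity are controlled by $Z_X^0$). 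To see $Z_X^0$ is abelian and acts locally freely on $\delta(V)$: pull back the Lie algebra $\LieZ$ of $Z_X$ by $\delta$ to get vector fields on $V$ commuting with the $\Lambda$-action and descending to $T$; holomorphic vector fields on a complex torus $T=V/\Lambda$ are exactly the translation-invariant ones, i.e. constant vector fields valued in $V$. Thus $\LieZ$, viewed via $\delta$, becomes a complex linear subspace of $V$ acting by translations; two translations commute, so $\LieZ$ is abelian and $Z_X^0$ (being connected with abelian Lie algebra) is abelian; and a nonzero constant vector field on $V$ never vanishes, so the action is locally free on $V$, hence on $\delta(V)$ since $\delta$ is a local biholomorphism.

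The closedness of $Z_X\subset G$ I would get by combining the cases: if $(X,G)$ is full then $Z_X$ is closed as a limit of symmetries (stated in the text just before the lemma); in general $Z_X^0$ is closed by Lemma~\ref{lemma:centralizer} and $Z_X$ has finitely many components, each a coset of the closed subgroup $Z_X^0$, hence closed. For the full case, $Z_V^0$ is the connected subgroup of the translation group with Lie algebra $\LieZ\subset V$, i.e. the linear subspace $\LieZ$ itself acting by translation; its image in $T=V/\Lambda$ is the connected subgroup $\LieZ/(\LieZ\cap\Lambda)$, and this is closed in $T$ precisely because $Z_T^0$ is closed when the model is full, so $\LieZ\cap\Lambda$ is a lattice in $\LieZ$ and $Z_T^0$ is a subtorus.

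For the equivalences: (2)$\Leftrightarrow$(1) because $\LieZ$ acts on $V$ by translations, so if $\dimC\LieZ=\dimC T=n$ then $\LieZ=V$ and all translations are symmetries, giving translation invariance; conversely if the structure is translation invariant then the full translation group $V$ embeds in $Z_V$, forcing $\LieZ=V$. For (1)$\Rightarrow$(3): translation invariance means the whole translation group of $V$ lies in $Z_V$, so $h\colon Z_V\to Z_X\subset G$ restricted to this copy of $V$ gives the desired extension, and $\delta$-equivariance is exactly the relation $\delta(\gamma v)=h(\gamma)\delta(v)$ restricted to translations. (3)$\Rightarrow$(4): given $h\colon V\to G$ with $\delta(v+w)=h(v)\delta(w)$, evaluate at $w=0$ to get $\delta(v)=h(v)x_0$; since $\delta$ is a local biholomorphism, $v\mapsto h(v)x_0$ is one, which forces the image of $h$ to be transverse to $H=G^{x_0}$ at the identity (the derivative of $v\mapsto h(v)x_0$ at $0$ is injective into $T_{x_0}X=\Lie{g}/\Lie{h}$). (4)$\Rightarrow$(2): given such an $h\colon V\to G$, its image is an abelian subgroup of $G$ commuting with itself and in particular with $h(\Lambda)$ (since $\Lambda\subset V$), so $h(V)\subset\centralizer{h\of{\Lambda}}{G}$; transversality to $H$ means $h(V)$ has an open orbit, and its Lie algebra is an $n$-dimensional complement to $\Lie{h}$, so $h(V)\subseteq Z_X$ (up to the finite-index matching already established) forces $\dimC\LieZ\ge n$, hence $=n$.

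The main obstacle is the bookkeeping around the finite-index/component-group statements — making precise that $Z_X$ is only \emph{virtually} inside the centralizer (because a symmetry may induce a nontrivial automorphism $a\in\GL{\Lambda}$ that conjugates $h(\Lambda)$ nontrivially) while $Z_X^0$ sits honestly inside it, and threading this carefully through implications (3)$\Rightarrow$(4)$\Rightarrow$(2) so that one does not accidentally need $h(V)=Z_X$ on the nose. The rest is the standard dictionary: holomorphic vector fields on a torus are constant, $\delta$ is étale, and transversality to $H$ is openness of an orbit.
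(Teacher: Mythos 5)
Most of your proposal tracks the paper's argument: holomorphic vector fields on \(T\) are constant, so \(\LieZ\) pulled back by \(\delta\) is a linear subspace of \(V\) acting by translations, whence \(Z_X^0\) is abelian and acts locally freely on \(\delta(V)\); and your cycle \((1)\Rightarrow(3)\Rightarrow(4)\Rightarrow(2)\Rightarrow(1)\) is correct and in fact more explicit than the paper, which only really spells out \((1)\Leftrightarrow(2)\). The step \((4)\Rightarrow(2)\) is best closed exactly as you suggest at the end: \(h(V)\) is connected and abelian, contains \(h(\Lambda)\), hence lies in the identity component of \(\centralizer{h\of{\Lambda}}{G}\), which is \(Z_X^0\) by Lemma~\ref{lemma:centralizer}; this does not require any finite-index matching.

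The genuine gap is in your component-group bookkeeping. You claim that \(Z_X/Z_X^0\) is finite because ``the component group injects into \(\GL{\Lambda}\)''. It does not: the map \((f,g)\in Z_V\mapsto f'(0)\in\GL{V}\) kills every pair whose \(f\) is a translation, and these include all of \(\Lambda\) itself, whose image \(h(\Lambda)\subset Z_X\) is in general an infinite discrete group not contained in \(Z_X^0\). So \(Z_X\) typically has infinitely many components, and both of your deductions from finiteness of \(Z_X/Z_X^0\) --- that a finite-index subgroup of \(Z_X\) lies in the centralizer ``because \(Z_X^0\) does'', and that \(Z_X\) is closed ``as a finite union of cosets of the closed subgroup \(Z_X^0\)'' --- break. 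The repair is the paper's route: the kernel of \((f,g)\mapsto f'(0)\) is a finite-index subgroup of \(Z_V\) (here one uses that the linear parts form a finite group), it consists of pairs with \(f\) a translation, hence with \(g\) commuting with \(h(\Lambda)\); its image is therefore a finite-index, generally \emph{disconnected}, subgroup of \(Z_X\) lying inside \(\centralizer{h\of{\Lambda}}{G}\). Closedness of \(Z_X\) then follows not from finiteness of the component group but from the fact that this finite-index subgroup is an open subgroup of the closed complex subgroup \(\centralizer{h\of{\Lambda}}{G}\), whose components are disjoint closed analytic subvarieties, so that open subgroup is itself closed and \(Z_X\) is a finite union of its cosets.
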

\begin{proof}
Every holomorphic vector field on \(T\) is a translation, and translations commute, so \(Z_T^0\) is a complex abelian subgroup in (the translation group) \(T\).
If \((X,G)\) is full, then \(Z_T\) is a closed complex subgroup of \(G\).
No vector field on \(T\) can vanish at a point without vanishing everywhere: \(\LieZ \cap \LieH = 0\), with \(\LieH\) the Lie algebra of the stabilizer \(H \) of the point \(x_0\) in the image of the developing map. 
Since \(\LieZ \subset V\) is a complex linear subspace, \(\dimC{\LieZ} \le \dimC{V}\),  with equality exactly  when \(\LieZ=V\) and then the \((X,G)\)-structure is translation invariant. 
The map \((f,g) \in Z_V \mapsto f'(0) \in \GL{V}\) has finite image, because \(f'(0)\Lambda=\Lambda\).
So the kernel of this map is a finite index subgroup of \(Z_V\), consisting of those pairs \((f,g) \in Z_V\) for which \(f\) is a translation, and therefore commutes with the translations \(\Lambda\), and so \(g\) commutes with \(h(\Lambda)\).
So this finite index kernel maps to \(\centralizer{h(\Lambda)}{G}\).
But it maps to a finite index subgroup of \(Z_X\).
So, using Lemma \ref{lemma:centralizer}, \(Z_X\) is a finite extension of an open subgroup of \(\centralizer{h(\Lambda)}{G}\).
Since \(\centralizer{h(\Lambda)}{G}\) is a closed complex subgroup, its components are closed in the complex analytic Zariski topology, and disjoint.
Therefore the open subgroup is also a closed analytic subvariety, and so \(Z_X\) is also a closed analytic subvariety.
\end{proof}

Hence, to prove translation invariance of  an \((X,G)\)-structure on a complex torus \(T=\C{n}/\Lambda\), one needs to  prove that the centralizer of \(h(\Lambda)\) in \(G\) is of complex dimension \(n\).
We will see in section \ref{section:finite.holonomy} that, at least for \((X,G)\) algebraic, this centralizer is always of positive dimension.

\begin{example}
If we have a translation invariant \((X,G)\)-structure on \(T=\C{n}/\Lambda\), the \emph{same} holonomy morphism \(h\) and developing map \(\delta\) defines a translation invariant \((X,G)\)-structure on \emph{any} complex torus \(T'=\C{n}/\Lambda'\) of the same dimension.
\end{example}

\begin{example} 
Take \(X=\C{2}\) and \(G\) the complex special affine group.
The generic 1-dimensional subgroup of \(G\) has centralizer also 1-dimensional, so it is thus far possible that \(Z_X\) is one dimensional.
We need something more to decide translation invariance.
\end{example}

\begin{example}\label{example:B.beta.1}
Pick a positive integer \(k\).
Let \(X\defeq \C{2}\) and let \(G\) be the set of pairs \((t,f)\) for \(t \in \C{}\) and \(f\) a complex-coefficient polynomial in one variable of degree at most \(k\), with multiplication 
\[
\pr{t_0,f_0}\pr{t_1,f_1}=\pr{t_0+t_1,f_0(u)+f_1\of{u-t_0}}
\]
and action on \(X\)
\[
\pr{t,f}\pr{u,v}=\pr{u+t,v+f\of{u+t}}.
\]
As we vary \(k\) we obtain all of the complex algebraic homogeneous surfaces of class \(B\beta{1}\), in Sophus Lie's notation \cite{McKay:2014}.
One checks easily that all 1-dimensional subgroups of \(G\) have 2-dimensional centralizer, so our group \(Z_X\) must have dimension at least 2.
Therefore for this particular \((X,G)\), every \((X,G)\)-structure on any complex 2-torus is translation invariant.
\end{example}

\section{Finite holonomy} \label{section:finite.holonomy}

Pick a complex vector space \(V\) and lattice \(\Lambda \subset V\). 
Let \(X\defeq G \defeq V/\Lambda'\) for some (possibly different) lattice \(\Lambda' \subset V\) and let \(\delta \colon z \in V \mapsto z+\Lambda' \in X\), so that
\[
h\of{\Lambda}=\Lambda/\pr{\Lambda \cap \Lambda'}.
\]
The tori \(V/\Lambda\) and \(V/\Lambda'\) are isogenous just when \(h(\Lambda) \subset V/\Lambda'\) is a finite set, and then the Zariski closure is just \(h\of{\Lambda}\), finite.
Since \(G\) is abelian (and connected), lemma~\vref{lemma:centralizer} implies \(Z_X=G\), so the \((X,G)\)-structure with developing map \(\delta\) and holonomy morphism \(h\) is translation invariant.

\begin{lemma} \label{lemma:translation invariant}
A holomorphic \((X,G)\)-structure on a complex torus \(V/\Lambda\) has finite holonomy group just when it is constructed as above. In particular, it is translation invariant.
\end{lemma}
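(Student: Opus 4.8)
We need to show: if a holomorphic $(X,G)$-structure on a complex torus $T = V/\Lambda$ has finite holonomy group $h(\Lambda)$, then it arises from the construction just described — i.e. $X = G = V/\Lambda'$ for some lattice $\Lambda'$ with $\Lambda \cap \Lambda'$ of finite index in $\Lambda$, with the obvious developing map and holonomy. The plan is to use Lemma \ref{lemma:algebraic.description}: it suffices to prove the structure is translation invariant, and then identify the model. First, assume $h(\Lambda)$ is finite. Since $h(\Lambda)$ is a finite subgroup of $G$, its centralizer $\centralizer{h(\Lambda)}{G}$ is a closed complex subgroup of $G$ of the same dimension as $G$ (it contains the identity component of $G$, because $h(\Lambda)$ is contained in the center of $G^0 \cdot h(\Lambda)$? — no, more carefully: the centralizer of a finite set need not be all of $G^0$). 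So the key point is not automatic; I'll come back to it.

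The honest approach: pass to a finite-index sublattice. Since $h(\Lambda)$ is finite, the kernel $\Lambda_0 \defeq \ker h$ is a finite-index sublattice of $\Lambda$, hence a lattice in $V$, and $T_0 \defeq V/\Lambda_0 \to T$ is a finite covering. The pulled-back $(X,G)$-structure on $T_0$ has trivial holonomy, so its developing map $\delta \colon V \to X$ descends to a holomorphic map $\bar\delta \colon T_0 \to X$. As $T_0$ is compact and $\bar\delta$ is a local biholomorphism, its image is open and closed in the connected manifold $X$, hence $\bar\delta$ is surjective and, being a local biholomorphism from a compact manifold, a covering map; thus $X$ is compact. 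A compact complex manifold carrying a transitive holomorphic action of a complex Lie group $G$, together with the fact that $\delta \colon V \to X$ is a local biholomorphism from $\C{n}$, forces $X$ to be a complex torus: indeed $V \to \tilde X$ (universal cover) is a biholomorphism since both are simply connected and $V \to X$ factors through it as a covering, so $\tilde X = \C{n}$ and $X = \C{n}/\Gamma$ for a lattice $\Gamma = \pi_1(X)$. The deck group of $\delta \colon V \to X$ is then a lattice $\Lambda' \subset V$ with $X = V/\Lambda'$; the $G$-action on $X$ by biholomorphisms contains the translations, and effectiveness plus $G$ connected (together with $\Aut(X)^0 = X$ acting by translations on a torus) gives $G = V/\Lambda'$ acting by translation. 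Finally, $\delta$ equivariant for $h$ means $h(\gamma)$ is translation by the class of $\gamma$ in $V/\Lambda'$, so $h(\Lambda) = \Lambda/(\Lambda \cap \Lambda')$, which is finite exactly as in the construction. This also shows $\Lambda \cap \Lambda'$ has finite index in $\Lambda$. Translation invariance is then immediate: $\delta$ is the quotient map $V \to V/\Lambda'$, equivariant under all of $V$ acting by translations, so criterion (3) of Lemma \ref{lemma:algebraic.description} holds.

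The main obstacle is pinning down that $X$ (hence $G$) is genuinely a torus rather than merely a compact quotient of $\C{n}$ in some weaker sense — concretely, verifying that the transitive holomorphic $G$-action on the compact torus $X$ must be the translation action, using that the identity component of the biholomorphism group of a complex torus is the torus itself and that $G$ acts effectively. Once that is in hand, everything else is bookkeeping with the covering $V \to V/\Lambda'$ and the equivariance of $\delta$. Note the converse direction — that the construction yields finite holonomy — was already observed in the paragraph preceding the lemma, so only the forward implication requires argument.
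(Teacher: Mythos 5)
Your overall skeleton matches the paper's: pass to the finite cover \(T_0=V/\ker h\) with trivial holonomy, observe that the developing map descends to \(T_0\), that its image is open and closed, hence that \(\bar\delta\colon T_0\to X\) is a finite covering and \(X\) is compact. But the pivotal step --- that \(X\) is a complex torus and \(G\) its translation group --- is asserted rather than proved, and the justification you give is false as stated. From ``\(V\to X\) is the universal covering'' you conclude ``\(X=\C{n}/\Gamma\) for a lattice \(\Gamma=\fundamentalGroup{X}\)'', but a discrete group acting freely, properly discontinuously and cocompactly on \(\C{n}\) by biholomorphisms need not consist of translations: bielliptic surfaces are compact quotients of \(\C{2}\) finitely covered by tori that are not tori, and the Iwasawa manifold is a compact \emph{homogeneous} quotient of \(\C{3}\) that is not a torus. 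So neither ``universal cover \(=\C{n}\)'', nor ``finitely covered by a torus'', nor ``homogeneous'' alone closes this step, and your subsequent appeal to \(\Aut{X}^0=X\) presupposes exactly what is in question. You flag this as ``the main obstacle'' yourself, but you do not actually resolve it.

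The paper resolves it by running the logic in the opposite order. Once the holonomy is trivial, Lemma~\ref{lemma:centralizer} gives \(Z_X^0=\centralizer{h(\Lambda)}{G}^0=G\) (since \(G\) is connected), so the structure is translation invariant \emph{first}; Lemma~\ref{lemma:algebraic.description} then forces \(G\) to be \emph{abelian} and to act locally freely, so \(X\) is a quotient of the connected abelian complex Lie group \(G\) by a discrete subgroup. Only then does the compactness argument (descend the developing map, image open and closed) enter: a compact quotient of a connected abelian complex Lie group by a discrete subgroup is necessarily a complex torus, and effectiveness identifies \(G\) with its translation group. The abelianness of \(G\), extracted from the centralizer lemma, is precisely the ingredient your argument is missing; without it (or a substitute such as Borel--Remmert applied to the Kähler homogeneous manifold \(X\), which you do not invoke), the identification of \(X\) as a torus does not follow. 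I recommend you restructure the proof to deduce \(Z_X=G\) and translation invariance before attempting to identify the model.
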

\begin{proof}
If \(h(\Lambda)\) is finite, then we can lift to a finite covering of \(T\) to arrange that \(h(\Lambda)\) is trivial, and then, by lemma~\ref{lemma:centralizer}, \(Z_X=G\). In particular, the \((X,G)\)-structure is translation invariant. Lemma  \vref{lemma:algebraic.description} implies that 
\(G\) is an abelian group acting locally freely on \(X\), meaning that \(X\) is a quotient of \(G\) by a discrete subgroup.

The developing map descends
to \(T \). Its image is open and closed in \(X\), so the developing map is onto: it is a finite cover of  \(X\), by \(T\).  Consequently, \(X\) is  also a complex torus. Since \(G\) is connected and acts transitively and effectively on the complex torus  \(X\), \(G\) is the  translation group  \(X\). 
\end{proof}

\begin{corollary}\label{corollary:positive.dim.symmetries}
Suppose that \((X,G)\) is a complex algebraic homogeneous space.
Any holomorphic \((X,G)\)-structure on a complex torus has positive dimensional symmetry group.
\end{corollary}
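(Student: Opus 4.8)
The plan is to reduce the statement to a dimension count on the centralizer $\centralizer{h(\Lambda)}{G}$, using Lemma~\ref{lemma:algebraic.description}, and then to produce a positive-dimensional subgroup of that centralizer by Zariski-closure arguments available because $(X,G)$ is algebraic. By Lemma~\ref{lemma:algebraic.description} it suffices to show $\dimC \centralizer{h(\Lambda)}{G} \ge 1$; equivalently, $Z_X$ is positive dimensional. If $h(\Lambda)$ is finite, Lemma~\ref{lemma:translation invariant} gives $Z_X = G$ and we are done (indeed the structure is translation invariant). So we may assume $h(\Lambda)$ is infinite.

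First I would pass to the Zariski closure $\overline{h(\Lambda)}$ of $h(\Lambda)$ in $G$, which is a closed algebraic subgroup of $G$ since $G$ is algebraic; because $h(\Lambda)$ is infinite, $\overline{h(\Lambda)}$ has positive dimension, so its identity component $A \defeq \overline{h(\Lambda)}^0$ is a nontrivial connected algebraic subgroup. The key observation is that $h(\Lambda)$ is \emph{abelian} (it is the image under a group morphism of the free abelian group $\Lambda$), hence so is its Zariski closure $\overline{h(\Lambda)}$, and in particular $A$ is a nontrivial connected abelian algebraic group. Since $A$ is abelian, $A \subseteq \centralizer{A}{G} \subseteq \centralizer{h(\Lambda)}{G}$, because centralizing the Zariski-dense subgroup $h(\Lambda)$ of $A$ is the same as centralizing all of $A$ (the centralizer of a fixed element is Zariski closed). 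Therefore $\centralizer{h(\Lambda)}{G}$ contains the positive-dimensional group $A$, so $\dimC \centralizer{h(\Lambda)}{G} \ge 1$.

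By Lemma~\ref{lemma:centralizer}, $Z_X^0$ is the identity component of $\centralizer{h(\Lambda)}{G}$ (here $\pi = \Lambda$), hence $\dimC Z_X \ge 1$, and then Lemma~\ref{lemma:algebraic.description} identifies $\LieZ$ with a nonzero complex subspace of $V$ acting on $V = \tilde{T}$ by translations; since every algebraic homogeneous space is full (by the lemma proved above), $Z_T^0$ is a nontrivial subtorus of $T$ acting by translations as symmetries of the $(X,G)$-structure. This gives the positive-dimensional symmetry group.

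The main obstacle is the step asserting that the centralizer of the abelian group $h(\Lambda)$ contains a positive-dimensional subgroup: one must be careful that $h(\Lambda)$ could be a dense subgroup of a torus or unipotent group with no rational structure making $\overline{h(\Lambda)}$ a priori algebraic — but this is exactly where algebraicity of $(X,G)$ (equivalently of $G$) is used, guaranteeing that Zariski closures of subgroups are algebraic subgroups and that $\overline{h(\Lambda)}$, being infinite, is positive dimensional. A secondary subtlety is the passage to a finite cover when $h(\Lambda)$ is finite, but that is handled cleanly by Lemma~\ref{lemma:translation invariant}; everything else is bookkeeping with the lemmas already proved.
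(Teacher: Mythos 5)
Your proof is correct and follows essentially the same route as the paper's: both arguments rest on the observation that \(h(\Lambda)\) is abelian and hence contained in its own centralizer, so that if the (algebraic) centralizer were zero-dimensional then \(h(\Lambda)\) would be finite, which is exactly the case disposed of by Lemma~\ref{lemma:translation invariant}. The paper phrases this as a proof by contradiction while you argue directly via the Zariski closure of \(h(\Lambda)\), but the content is identical (and your closure step is even slightly more than needed, since an infinite subgroup of an algebraic centralizer already forces positive dimension).
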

\begin{proof}
Assume, by contradiction, that  \(\dimC{\LieZ}=0\).  Then \(\centralizer{h(\Lambda}{G})\) is finite, being a discrete  algebraic subgroup of \(G\). 
But \(h(\Lambda) \subset \centralizer{h(\Lambda)}{G}\), since \(\Lambda\) is abelian.
Therefore \(h(\Lambda)\) is finite.
Lemma~\ref{lemma:translation invariant} implies that \(Z_X=G\): a contradiction.
\end{proof}

\begin{corollary}\label{corollary:curves}
If a smooth compact complex curve has genus at most 1, then every holomorphic locally homogeneous structure on the curve is homogeneous.
If the curve has genus more than 1, then no holomorphic locally homogeneous structure on the curve is homogeneous.
\end{corollary}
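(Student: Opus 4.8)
The plan is to split the statement into the three genus ranges, reducing each to results already established above.

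\textbf{Genus $>1$.} Let $C$ be a compact Riemann surface of genus $g\ge 2$ carrying a holomorphic $(X,G)$-structure, and suppose toward a contradiction that the structure is homogeneous, i.e.\ that $Z_C$ acts transitively on $C$. First I would note that the assignment $[(f,g)]\mapsto\bar f$ defines an injective homomorphism $Z_C\to\Aut{C}$ into the group of biholomorphisms of $C$: every element of $Z_C$ is, by construction, a biholomorphism of $C$, and the kernel is exactly the (already divided out) image of $\fundamentalGroup{C}$. Now invoke the classical finiteness of the automorphism group of a curve of genus $\ge 2$ (Schwarz, with Hurwitz's bound $84(g-1)$): $\Aut{C}$ is finite, hence cannot act transitively on the infinite set $C$. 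This contradiction shows no holomorphic locally homogeneous structure on $C$ is homogeneous.

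\textbf{Genus $0$.} Here $C=\Proj{1}$ is simply connected, so the holonomy morphism is trivial and the developing map $\delta\colon\Proj{1}\to X$ is a local biholomorphism of a compact complex curve. It is therefore a finite holomorphic covering onto its image, which is open and closed in the connected curve $X$; hence $\delta$ is onto and $X$ is compact. A finite covering of $X$ by $\Proj{1}$ forces $X\cong\Proj{1}$ and $\delta$ a biholomorphism, since $\Proj{1}$ admits no nontrivial free action of a finite group (by the Euler characteristic). With trivial holonomy, Lemma~\ref{lemma:centralizer} gives $Z_X^0=G$ (the centralizer of the trivial group is all of $G$), so $Z_X=G$; since $\fundamentalGroup{C}$ is trivial, $Z_C=Z_{\tilde{C}}$, and transporting the $G$-action through the biholomorphism $\delta$ shows $Z_C\cong G$ acts transitively on $C\cong X$. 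Thus the structure is homogeneous.

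\textbf{Genus $1$.} Now $C=\C{}/\Lambda$ is a one-dimensional complex torus. Running through the classification of complex homogeneous curves \cite{McKay:2012,McKay/Pokrovskiy:2010,McKay:2014}, $X$ is one of $\C{}$, $\C{*}$, $\Proj{1}$, or an elliptic curve, and in every case $(X,G)$ is an algebraic homogeneous space. Corollary~\ref{corollary:positive.dim.symmetries} then gives $\dimC{\LieZ}\ge 1=\dimC{C}$, and Lemma~\ref{lemma:algebraic.description} forces $\dimC{\LieZ}=\dimC{C}$; hence the structure is translation invariant. In particular the translations of $C$ are automorphisms, so $Z_C$ acts transitively and the structure is homogeneous. (Alternatively, this case is the one-dimensional instance of the main theorem.)

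\textbf{Where the difficulty lies.} The genus $0$ and genus $>1$ arguments are soft. The only substantive input is the assertion used in the genus $1$ case that \emph{every} complex homogeneous curve $(X,G)$ is an algebraic homogeneous space --- equivalently, that the group $G$ occurring in the structure, which a priori could be a proper subgroup of the full biholomorphism group of $X$, is nonetheless algebraic, so that Corollary~\ref{corollary:positive.dim.symmetries} (and with it Lemma~\ref{lemma:translation invariant}) applies. This is the point at which I would appeal to the classification; everything else is bookkeeping with the lemmas of Sections~\ref{section:symmetries} and~\ref{section:finite.holonomy}.
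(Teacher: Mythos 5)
Your proof is correct and follows essentially the same route as the paper: algebraicity of all complex homogeneous curves feeding into Corollary~\ref{corollary:positive.dim.symmetries} for genus $1$, the developing map identifying the structure with the model for genus $0$, and a soft argument for genus $>1$ (the paper uses the absence of nonzero holomorphic vector fields where you use Hurwitz finiteness of $\Aut{C}$, a cosmetic difference).
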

\begin{proof}
Every complex homogeneous curve \((X,G)\) is algebraic \cite{McKay:2011b} p. 14 Theorem 2.
By Corollary~\ref{corollary:positive.dim.symmetries}, any holomorphic \((X,G)\)-structure on any elliptic curve has positive dimensional symmetry group, with identity component consisting of translations, so is translation invariant.
Any locally homogeneous structure on a simply connected compact manifold is identified with a cover of the model by the developing map, so there is only one locally homogeneous structure on \(\Proj{1}\).
Higher genus Riemann surfaces have no nonzero holomorphic vector fields.
\end{proof}

\section{Discrete stabilizer}

\begin{lemma}\label{lemma:X.is.G}
Suppose that \((X,G)\) is a complex homogeneous space and that \(\dimC X = \dimC G\).
Then every holomorphic \((X,G)\)-structure on any complex torus is translation invariant.
\end{lemma}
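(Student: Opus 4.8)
The plan is to exploit the hypothesis $\dimC X = \dimC G$, which says that the stabilizer $H = G^{x_0}$ is discrete, i.e. $\LieH = 0$ and $G \to X$, $g \mapsto g x_0$, is a covering map onto its image (an open orbit, hence all of $X$ since $X$ is connected and $G$ acts transitively). First I would observe that the developing map $\delta \colon V \to X$ can be lifted: because $V$ is simply connected and $\delta$ is a local biholomorphism into $X = G/H$ with $H$ discrete, the composition $V \xrightarrow{\delta} X$ lifts through the covering $G \to X$ to a holomorphic map $\tilde\delta \colon V \to G$, and after left translation we may assume $\tilde\delta(0) = e$. Moreover $\tilde\delta$ is itself a local biholomorphism (it is a local biholomorphism composed with, locally, the inverse of a covering), so $d\tilde\delta_0 \colon V \to \Lie{g}$ is a linear isomorphism, and $\Lie{g} = \LieH \oplus (\text{image}) = 0 \oplus \Lie{g}$, so automatically the image of any candidate morphism $V \to G$ is transverse to $H$.

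Next I would upgrade $\tilde\delta$ to a group morphism. The equivariance $\delta(\gamma v) = h(\gamma)\delta(v)$ for $\gamma \in \Lambda$ lifts, using discreteness of $H$ and connectedness of $V$, to $\tilde\delta(v + \lambda) = \tilde h(\lambda)\,\tilde\delta(v)$ for a lifted holonomy $\tilde h \colon \Lambda \to G$ (the lift is forced once we fix $\tilde\delta(0) = e$, since two lifts of $\delta$ over the covering $G \to X$ differ by a locally constant $H$-valued factor, hence a constant). Now the key point: I want to show $\tilde\delta \colon V \to G$ is a group homomorphism from the additive group $V$. Consider, for fixed $w \in V$, the two maps $v \mapsto \tilde\delta(v + w)$ and $v \mapsto \tilde\delta(v)\tilde\delta(w)$; both are lifts through $G \to X$ of $v \mapsto \delta(v+w)$ and $v \mapsto \tilde\delta(v)\delta(w)$ respectively, but these two maps into $X$ need not coincide a priori — this is exactly where the argument must do real work. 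The honest route is: the pulled-back Maurer–Cartan form $\tilde\delta^* \omega_G$ is a holomorphic $\Lie{g}$-valued $1$-form on $V$, hence has constant coefficients (holomorphic functions on a torus are constant, and $\tilde h$-equivariance makes $\tilde\delta^*\omega_G$ descend to $T$), so $\tilde\delta^*\omega_G = d\tilde\delta_0$ viewed as a constant form; integrating, $v \mapsto \exp(v)$ after identifying $V \cong \Lie{g}$ via $d\tilde\delta_0$ — wait, $G$ need not be abelian. The clean statement is: a holomorphic map $V \to G$ from a vector space whose pullback of the Maurer–Cartan form is a constant (translation-invariant) $1$-form $\xi_0 \colon V \to \Lie{g}$ must satisfy the Maurer–Cartan integrability $d\xi_0 + \tfrac12[\xi_0,\xi_0] = 0$; since $\xi_0$ is constant, $d\xi_0 = 0$, so $[\xi_0(v), \xi_0(w)] = 0$ for all $v, w$, i.e. the image of $\xi_0$ is an abelian subalgebra, and $\tilde\delta(v) = \exp(\xi_0(v))$ is the homomorphism from $V$ into the corresponding connected abelian subgroup. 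Because $d\tilde\delta_0 = \xi_0(\cdot)|_{v}$ restricted... more precisely $\xi_0$ evaluated linearly is the differential, which is an isomorphism onto $\Lie{g}$, forcing $\Lie{g}$ itself abelian and $\tilde\delta$ a local-biholomorphism homomorphism $V \to G$.

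Finally I would invoke Lemma~\ref{lemma:algebraic.description}: we have produced a morphism of complex Lie groups $h = \tilde h^{\mathrm{ext}} \colon V \to G$ (namely $v \mapsto \exp(\xi_0(v))$, which restricts to $\tilde h$ on $\Lambda$ since both are the unique homomorphic lift) with image transverse to $H$ and with $\delta(v) = h(v)x_0$ the developing map, which is condition (4). Hence the $(X,G)$-structure is translation invariant. The main obstacle — and the step deserving the most care — is the passage from "$\tilde\delta^*\omega_G$ is a holomorphic $1$-form on a torus, hence constant" to the conclusion that $\tilde\delta$ is a homomorphism: one must check that $\tilde h$-equivariance of $\tilde\delta$ indeed makes the pulled-back Maurer–Cartan form descend to a well-defined holomorphic $1$-form on $T = V/\Lambda$ (it does, since $\omega_G$ is left-invariant and $\tilde\delta(v+\lambda) = \tilde h(\lambda)\tilde\delta(v)$), and then that constancy of this form plus the Maurer–Cartan equation genuinely forces abelianness of the relevant subalgebra rather than merely pointwise vanishing of a bracket. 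Everything else (lifting through the covering $G \to X$, uniqueness of lifts, identifying the result with case (4) of Lemma~\ref{lemma:algebraic.description}) is routine covering-space bookkeeping.
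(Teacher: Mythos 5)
Your architecture agrees with the paper's at both ends --- lift \(\delta\) through the covering \(g \in G \mapsto g x_0 \in X\) (possible since \(H\) is discrete), show the lift is a group morphism, and finish with condition (3)/(4) of Lemma~\ref{lemma:algebraic.description} --- but your middle step is genuinely different. The paper forms the ``associator'' \(\Delta(x,y)=\delta(x)^{-1}\delta(y)^{-1}\delta(x+y)\), passes to the universal covering group of \(G\), and quotes Matsushima--Morimoto (holomorphic maps from a compact torus to a simply connected complex Lie group are constant) to conclude \(\Delta \equiv 1\). You instead pull back the Maurer--Cartan form, use that holomorphic \(1\)-forms on a torus have constant coefficients, and read off from the Maurer--Cartan equation that \(\LieG\) is abelian and \(\tilde\delta = \exp\circ\,\xi_0\). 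That portion of your argument is correct and in some ways cleaner: it avoids the universal cover of \(G\) and the external citation, and it makes explicit that the hypotheses force \(G\) to be abelian (which is consistent with, and indeed implied by, the paper's conclusion).

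There is, however, a genuine gap at exactly the step you flag, and it is not the one you worry about. The deck group of the covering \(G \to X=G/H\) is \(H\) acting by \emph{right} translations, so the two lifts \(v \mapsto \tilde\delta(v+\lambda)\) and \(v \mapsto h(\lambda)\tilde\delta(v)\) of \(v \mapsto \delta(v+\lambda)\) differ by a constant \emph{right} factor: \(\tilde\delta(v+\lambda)=h(\lambda)\,\tilde\delta(v)\,\eta(\lambda)\) with \(\eta(\lambda)=h(\lambda)^{-1}\tilde\delta(\lambda)\in H\). This is of your claimed form \(\tilde h(\lambda)\tilde\delta(v)\) only if \(\eta(\lambda)\) centralizes \(\tilde\delta(V)\), hence all of \(G\) (since \(\tilde\delta(V)\) is open and generates \(G\)). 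With the correct equivariance one gets \(\tau_\lambda^*\bigl(\tilde\delta^*\omega_G\bigr)=\Ad\of{\eta(\lambda)^{-1}}\,\tilde\delta^*\omega_G\), so the pulled-back Maurer--Cartan form is not \(\Lambda\)-invariant and need not descend to a holomorphic \(1\)-form on \(T\); it is only a section of a flat bundle twisted by \(\Ad\circ\eta\), and the ``constant coefficients'' step collapses. To be fair, the paper's own proof is equally terse here: its displayed quantity \(\delta(x+\lambda)\delta(x)^{-1}h(\lambda)^{-1}\) lies in the conjugate \(\delta(x+\lambda)H\delta(x+\lambda)^{-1}\) rather than in \(H\), and its conclusion \(\delta(x+\lambda)=\delta(\lambda)\delta(x)\) likewise presupposes that \(\eta(\lambda)\) is central. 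But in a self-contained proof you must either show \(\eta(\Lambda)\subset Z(G)\) (equivalently, that the structure genuinely lifts to a \((G,G)\)-structure in the sense of Section~\ref{section:lifting}), or rework the descent to accommodate the \(\Ad\of{\eta(\Lambda)}\)-twist. Everything downstream of that point in your write-up is sound.
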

\begin{proof} Here \(X=G/H \), with \(H \) a discrete subgroup in \(G \).
Lift the developing map uniquely to a map to \(G\), so that \(\delta(0)=1\), and then 
\[
\delta(x+\lambda)\pr{h(\lambda)\delta(x)}^{-1} \in H,
\]
i.e.
\[
\delta(x+\lambda)\delta(x)^{-1}h(\lambda)^{-1} \in H
\]
is constant, because the stabilizer \(H\) has dimension zero.
Plug in \(x=0\) to find \(\delta(x+\lambda) = \delta(\lambda)\delta(x)\), i.e. we can arrange that \(H=\curly{1}\) and \(h=\left.\delta\right|_{\Lambda}\).

Consider the universal covering group \(\tilde{G} \to G\).
The developing map \(\delta \colon V \to G\) lifts uniquely to a map \(\tilde{\delta} \colon V \to \tilde{G}\) so that \(\tilde{\delta}(0)=1\).
By the same argument, \(\tilde{\delta}(x+\lambda)=\tilde{\delta}(\lambda)\tilde{\delta}(x)\) for all \(x \in V\) and \(\lambda \in \Lambda\), i.e. \(\tilde{\delta}\) is a developing map for a \(\pr{\tilde{G},\tilde{G}}\)-structure.
So without loss of generality, we can assume that \(X=G\) and that \(G\) is simply connected.

Consider the map
\[
\Delta \colon (x,y) \in V \times V \mapsto \delta(x)^{-1}\delta(y)^{-1}\delta(x+y) \in G.
\]
Clearly \(\Delta(x,y+\lambda)=\Delta(x,y)\) if \(\lambda \in \Lambda\).
So \(\Delta \colon V \times T \to G\) is holomorphic.
Fixing \(x\), \(y \mapsto \Delta(x,y) \in G\) is a holomorphic map from a complex torus to a simply connected complex Lie group, and therefore is constant \cite[p. 139, theorem 1]{Matsushima/Morimoto:1960}.
So \(\Delta(x,y)=\Delta(x,0)=1\) for all \(x,y\), i.e. \(\delta \colon V \to G\) is a holomorphic Lie group morphism, hence, by Lemma \ref {lemma:algebraic.description} point  (3),  a translation invariant \((X,G)\)-structure.
\end{proof}

\begin{example}
From the classification of complex homogeneous surfaces \((X,G)\) \cite{McKay:2014}, the surfaces \(D1, D1_1, \dots, D1_5\) and \(D2, D2_1, \dots, D2_{14}\) are the smooth quotients of 2-dimensional complex Lie groups by various discrete groups, i.e. they are precisely the complex homogeneous surfaces \((X,G)\) with \(2=\dimC{X}=\dimC{G}\).
By lemma~\vref{lemma:X.is.G}, all \((X,G)\)-structures on complex tori, with \((X,G)\) among the surfaces \(D1, D1_1, \dots, D1_5\) and \(D2, D2_1, \dots, D2_{14}\), are translation invariant.
\end{example}

\section{Enlarging the model and its symmetry group}

A \emph{morphism} \((X,G) \to \pr{X',G'}\) of complex homogeneous spaces is a holomorphic map \(X \to X'\) equivariant for a holomorphic group morphism \(G \to G'\).
If moreover \(X \to X'\) is a local biholomorphism, then every \((X,G)\)-structure induces an \(\pr{X',G'}\)-structure by composing the developing map with \(X \to X'\) and the holonomy morphism with \(G \to G'\), and any \(\pr{X',G'}\)-structure is induced by at most one \((X,G)\)-structure.

Lemma~\vref{lemma:algebraic.description} together with  lemma~\vref{lemma:X.is.G} lead to the following corollary:

\begin{corollary}
For any complex homogeneous space \((X,G)\), an \((X,G)\)-structure on a complex torus is translation invariant just when it is induced by an \(\pr{X_0,G_0}\)-structure, where \(G_0 \subset G\) is a connected complex subgroup acting transitively and locally freely on an open set \(X_0 \subset X\) and if this occurs then \(G_0\) is abelian.
\end{corollary}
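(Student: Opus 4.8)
The plan is to derive both directions of the equivalence from Lemma~\ref{lemma:algebraic.description} and Lemma~\ref{lemma:X.is.G}, writing as usual $T=V/\Lambda$, with developing map $\delta\colon V\to X$, holonomy $h\colon\Lambda\to G$, $x_0=\delta(0)$, $H=G^{x_0}$. For the forward implication, suppose the $(X,G)$-structure on $T$ is translation invariant. By Lemma~\ref{lemma:algebraic.description} this forces $\dimC{\LieZ}=\dimC{T}$, hence $\LieZ=V$ (a linear subspace of $V$ of full dimension), and point~(4) of that lemma provides a morphism of complex Lie groups $h\colon V\to G$ extending the holonomy with $\delta(v)=h(v)x_0$. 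I would take $X_0\defeq\delta(V)$, which is open in $X$ because $\delta$ is a local biholomorphism, and $G_0\defeq Z_X^0$. By Lemma~\ref{lemma:algebraic.description}, $G_0$ is a closed connected abelian complex subgroup of $G$ that acts locally freely on $X_0$; it acts transitively on $X_0$ because $\LieZ=V$ acts on $V$ by all translations, whose $\delta$-images form the single orbit $\delta(V)=X_0$. Finally the connected subgroup $h(V)$ contains the identity and lies in $Z_X$ --- since for each $w\in V$ the pair $(v\mapsto v+w,\,h(w))$ lies in $Z_V$, by the identity $\delta(v+w)=h(w)\delta(v)$ --- so $h(\Lambda)\subseteq h(V)\subseteq Z_X^0=G_0$. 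Thus $v\mapsto h(v)x_0\in X_0$ together with $h|_\Lambda\colon\Lambda\to G_0$ is an $(X_0,G_0)$-structure on $T$ which, under the morphism of models $(X_0,G_0)\to(X,G)$ given by the two inclusions, induces the original $(X,G)$-structure.

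For the converse, suppose the $(X,G)$-structure on $T$ is induced by an $(X_0,G_0)$-structure with $G_0\subseteq G$ connected acting transitively and locally freely on the open set $X_0\subseteq X$. First I would note that $(X_0,G_0)$ is itself a complex homogeneous space: an element of $G_0$ fixing the nonempty open subset $X_0$ of the connected manifold $X$ pointwise must fix all of $X$ by analytic continuation, hence be trivial, so $G_0$ acts effectively on $X_0$; and transitivity together with local freeness makes the stabilizer discrete, so $\dimC{X_0}=\dimC{G_0}$. Lemma~\ref{lemma:X.is.G} then applies to $(X_0,G_0)$ and says the induced $(X_0,G_0)$-structure on $T$ is translation invariant. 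By point~(3) of Lemma~\ref{lemma:algebraic.description}, applied to $(X_0,G_0)$, its holonomy extends to a morphism $V\to G_0$ for which the $(X_0,G_0)$-developing map is equivariant; composing with $G_0\hookrightarrow G$ and $X_0\hookrightarrow X$ shows that the holonomy of the $(X,G)$-structure extends to a morphism $V\to G$ with the $(X,G)$-developing map equivariant, so point~(3) of Lemma~\ref{lemma:algebraic.description} gives that the $(X,G)$-structure on $T$ is translation invariant.

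To see that $G_0$ is necessarily abelian in this situation, I would apply point~(4) of Lemma~\ref{lemma:algebraic.description} to the (now translation invariant) $(X_0,G_0)$-structure on $T$: its developing map has the form $\delta_0(v)=h_0(v)x_0'$ for a morphism $h_0\colon V\to G_0$, with $\delta_0$ a local biholomorphism. As the stabilizer of $x_0'$ in $G_0$ is discrete, the orbit map $G_0\to X_0$ has injective differential; since $\delta_0$ is the composite of $h_0$ with this orbit map and is a local biholomorphism between manifolds of equal dimension $\dimC{V}=\dimC{G_0}$, the differential of $h_0$ at the origin is a linear isomorphism $V\to\LieG_0$ which, being also a Lie algebra morphism out of the abelian algebra $V$, forces $\LieG_0$ to be abelian; hence the connected group $G_0$ is abelian.

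I expect the delicate points to be purely organizational: making precise what it means for the $(X,G)$-structure to be ``induced'' by an $(X_0,G_0)$-structure, so that developing maps and holonomy morphisms compose correctly along a morphism of models, and --- in the forward direction --- using the closed subgroup $Z_X^0$ rather than the possibly non-closed immersed subgroup $h(V)$, which is why the inclusion chain $h(\Lambda)\subseteq h(V)\subseteq Z_X^0$ and the transitivity of $Z_X^0$ on the image of the developing map are the points worth spelling out in detail.
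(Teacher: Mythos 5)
Your proof is correct and follows exactly the route the paper intends: the paper gives no detailed argument, merely citing Lemma~\ref{lemma:algebraic.description} together with Lemma~\ref{lemma:X.is.G} and remarking that \(X_0\) is the image of the developing map viewed as a homogeneous space of \(G_0=Z_X^0\). You have simply filled in the details of that same argument, including the correct verification that \((X_0,G_0)\) is an effective homogeneous space with discrete stabilizer so that Lemma~\ref{lemma:X.is.G} applies in the converse direction.
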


In the statement above $X_0$ is the image of the developing map of the \((X,G)\)-structure, seen as a homogeneous space of the Lie group $G_0=Z^0_X$.

\begin{proposition}\label{proposition:enlarge}
Suppose that \((X,G) \to \pr{X',G'}\) is a morphism of complex homogeneous spaces for which \(X \to X'\) is a local biholomorphism and \(G \to G'\) has closed image \(\bar{G} \subset G'\).
Suppose that there is no positive dimensional compact complex torus in \(G'/\bar{G}\) acted on transitively by a subgroup of \(G'\).
For example, there is no such torus when \(G'\) is linear algebraic and \(G \to G'\) is a morphism of algebraic groups.
Every translation invariant \(\pr{X',G'}\)-structure on any complex torus with holonomy contained in \(\bar{G}\) is induced by a unique \((X,G)\)-structure, which is also translation invariant.
\end{proposition}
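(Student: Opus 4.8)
The plan is to make three moves. Write \(\phi\colon G\to G'\) for the group morphism and \(p\colon X\to X'\) for the map of homogeneous spaces, so that \(\phi(G)=\bar G\). First I would upgrade the hypothesis ``the holonomy lies in \(\bar G\)'' to the much stronger statement that the entire developing homomorphism takes values in \(\bar G\); this is the only place the hypothesis on \(G'/\bar G\) is used. Second I would lift that homomorphism through \(\phi\), using that \(\phi\) turns out to be a covering homomorphism. Third I would reassemble a developing map from the lift and verify it defines a translation invariant \((X,G)\)-structure inducing the given one; uniqueness is then automatic from the remark preceding the proposition, that an \((X',G')\)-structure is induced by at most one \((X,G)\)-structure.

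For the first move, let a translation invariant \((X',G')\)-structure on \(T=V/\Lambda\) with holonomy in \(\bar G\) be given. By Lemma~\ref{lemma:algebraic.description}(4) its developing map can be written \(\delta'(v)=h'(v)x_0'\) for a morphism of complex Lie groups \(h'\colon V\to G'\) with image transverse to the stabilizer of \(x_0'\defeq\delta'(0)\), and ``holonomy in \(\bar G\)'' reads \(h'(\Lambda)\subset\bar G\). Since \(\bar G\) is a closed complex subgroup of \(G'\), the coset space \(G'/\bar G\) is a complex manifold, and since \(h'(\Lambda)\subset\bar G\) the rule \(\psi\colon v+\Lambda\mapsto h'(v)\bar G\) is a well-defined holomorphic map \(T\to G'/\bar G\). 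It intertwines the translation action of \(V\) on \(T\) with the action of \(V\) on \(G'/\bar G\) by left multiplication through \(h'\), so it has constant rank and descends to an embedding of the compact complex torus \(V/\Sigma\) into \(G'/\bar G\), where \(\Sigma=\{v\in V:h'(v)\in\bar G\}\supset\Lambda\) is a closed subgroup. On this image the subgroup \(h'(V)\subset G'\) acts transitively, so \(\psi(T)\) is a compact complex torus in \(G'/\bar G\) on which a subgroup of \(G'\) acts transitively; by hypothesis it must be a point, and evaluating at \(0\) gives \(\psi(T)=\{\bar G\}\), that is, \(h'(V)\subset\bar G\).

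For the second and third moves, I would first observe that \(\ker\phi\) is discrete. Since \(p\) is a local biholomorphism, its differential at the base point is injective, which forces the Lie algebra of \(\ker\phi\) to lie in \(\LieH\); hence the identity component of \(\ker\phi\) is a connected normal subgroup of \(G\) fixing \(x_0\), hence --- being normal --- fixing all of \(X\), hence trivial since \(G\) acts effectively on \(X\). Thus \(\phi\colon G\to\bar G\) is a covering homomorphism and identifies Lie algebras, so the homomorphism \(h'\colon V\to\bar G\) lifts, uniquely, to a morphism of complex Lie groups \(h\colon V\to G\) with \(\phi\circ h=h'\), because \(V\) is simply connected. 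Now choose \(x_0\in X\) with \(p(x_0)=x_0'\) and set \(\delta(v)\defeq h(v)x_0\). Then \(p\circ\delta=\delta'\), so \(d\delta_0\) is an isomorphism because \(dp_{x_0}\) and \(d\delta'_0\) are; since \(\delta(v)=h(v)\delta(0)\) with \(h\) a homomorphism, \(\delta\) has constant rank and is therefore a local biholomorphism; and \(\delta(v+\lambda)=h(\lambda)\delta(v)\) for \(\lambda\in\Lambda\). So \((\delta,h|_\Lambda)\) is the developing map and holonomy of an \((X,G)\)-structure on \(T\), translation invariant by Lemma~\ref{lemma:algebraic.description}(3) because \(h\colon V\to G\) extends \(h|_\Lambda\); composing with \(p\) and \(\phi\) shows it induces the given structure, and uniqueness is as noted. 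For the parenthetical example, if \(G'\) is linear algebraic and \(G\to G'\) is a morphism of algebraic groups, then \(\bar G\) is an algebraic subgroup and \(G'/\bar G\) is quasi-projective, so any compact complex torus in it is an abelian subvariety --- and no connected linear algebraic group can act transitively on a positive-dimensional abelian variety, since every morphism from such a group to an abelian variety is constant.

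I expect the main obstacle to be the first move: recognizing that the image of \(T\) in \(G'/\bar G\) is forced to be a compact complex torus homogeneous under a subgroup of \(G'\) --- this is the geometric content of the statement --- together with the mild point-set bookkeeping needed to present that image as an embedded complex subtorus to which the hypothesis literally applies. Once \(h'(V)\subset\bar G\) is in hand, the discreteness of \(\ker\phi\) and the reconstruction of \(\delta\) are routine.
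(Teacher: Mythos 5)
Your proof is correct and follows essentially the same route as the paper's: the key step in both is to descend \(h'\) to a map \(T \to G'/\bar{G}\) whose image is a homogeneous compact complex torus, forced to be a point by hypothesis, so that \(h'(V) \subset \bar{G}\), after which one lifts through the covering and reconstructs the developing map. The only (harmless) difference is that you lift the homomorphism \(h'\) through \(G \to \bar{G}\) and define \(\delta(v) = h(v)x_0\) directly, whereas the paper lifts \(\delta'\) through the covering \(X \to X'\) and identifies \(\delta(v)=h(v)x_0\) by analytic continuation; your more explicit justification that the image in \(G'/\bar{G}\) really is a compact complex torus acted on transitively by \(h'(V)\) is a welcome elaboration of a point the paper states tersely.
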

\begin{proof}
Denote the developing map and holonomy morphism of the \(\pr{X',G'}\)-structure by \(\delta'\) and \(h'\).
Since the structure is translation invariant, extend \(h'\) to a complex Lie group morphism \(h' \colon V \to G'\) so that \(\delta' \colon V \to X\) is just \(\delta'(v)=h'(v)x_0'\).
Denote the morphism \(G \to G'\) as \(\rho \colon G \to G'\).
The holonomy morphism \(h'\) descends to a complex Lie group morphism
\(
T \to G'\!/\bar{G}
\).

By hypothesis, this is constant: \(h'\) has image in \(\bar{G}=\rho(G)\).
The developing map is \(\delta'(v)=h'(v)x_0'\) so has image in the image of \(X \to X'\).
On that image, \(X \to X'\) is a covering map, by \(G \to G'\) equivariance, so \(\delta' \colon \pr{V,0} \to \pr{X',x'_0}\) lifts to a unique local biholomorphism \(\delta \colon \pr{V,0} \to \pr{X,x_0}\).
Similarly, the morphism \(h' \colon V \to \rho(G)\) lifts uniquely to a morphism \(h \colon V \to G\).
By analytic continuation \(\delta(v)=h(v)x_0\) for all \(v \in V\), so that the \((X,G)\)-structure is translation invariant.

Suppose that \(G'\) is linear algebraic and \(\rho \colon G \to G'\) is a morphism of algebraic groups.
The quotient of a linear algebraic group by a Zariski closed normal subgroup is linear algebraic \cite{Borel:1991} p.93 theorem 5.6, so \(Z_{X'}/\rho\of{Z_X}\) is a linear algebraic group and therefore contains no complex torus subgroup.
\end{proof}

\begin{example}
 If \(G\) is the universal covering space of the group of complex affine transformations of \(\C{}\), and \(X=G\) acted on by left translation, then the center of \(G\) consists in the deck transformations over the complex affine group.
 The surface \((X,G)\) is not algebraic, but the quotient \(\pr{X',G'}\) by the center is algebraic; any \((X,G)\)-structure induces and arises uniquely from an \(\pr{X',G'}\)-structure.
\end{example}

\begin{example}
The classification of the complex homogeneous surfaces \cite{McKay:2014} yields unique morphisms
\(A2\to A1\), 
\(A3\to A1\), 
\(A3\to A2\), 
\(B\beta{1} \to B\beta{2}\),
\(B\beta{1}B0 \to B\beta{2}'\), 
\(B\gamma{1} \to B\delta{4}\),
\(B\gamma{2} \to B\delta{4}\),
\(B\gamma{3} \to B\delta{4}\),
\(B\gamma{4} \to B\delta{4}\),
\(B\delta{1} \to B\delta{2}\),
\(B\delta{1}' \to B\delta{2}'\),
\(B\delta{3} \to B\delta{4}\),
\(C2 \to C7\),
\(C2' \to C5'\),
\(C3 \to C7\),
\(C5 \to C7\),
\(C6 \to C7\),
\(C8 \to A1\),
\(D1 \to A1\),
\(D1_1 \to C7\),
\(D1_2 \to C7\),
\(D1_3 \to C5'\),
\(D1_4 \to C5'\),
\(D2 \to A1\),
\(D3 \to A1\).
For each of these morphisms \((X,G) \to \pr{X',G'}\),  \(G'/G\) contains no homogenenous complex torus.
Below we will prove that all \(\pr{X',G'}\)-structures on complex tori are translation invariant.

It follows that all \((X,G)\)-structures on complex tori are translation invariant, for each of these morphisms \((X,G) \to \pr{X',G'}\).
This reduces the proof of translation invariance of \((X,G)\)-structures on tori for most of the transcendental surfaces \((X,G)\) to the same problem for algebraic surfaces \(\pr{X',G'}\).
\end{example}

\section{Normalizer chain of the holonomy}

Continue with our notation as in section~\ref{section:Notations and main result}: \((X,G)\) is a complex homogeneous space, \(x_0 \in X\) some point, \(H \subset G\) the stabilizer of \(x_0\), \(T=V/\Lambda\) is a complex torus, \(\delta \colon \pr{V,0} \to \pr{X,x_0}\) is the developing map and \(h \colon \Lambda \to G\) the holonomy morphism for an \((X,G)\)-structure on \(T\).
Extend \(h\) as above to a morphism of Lie groups \(h \colon Z_V \to Z_X\).
Let \(S_{-1}\defeq \curly{1}\), \(S_0\defeq Z^0_X\) and let \(S_{i+1}\defeq \pr{\normalizer{S_i}{G}}^0\) with Lie algebras \(\LieS_i\). 
Recall that \(\pr{\normalizer{S_i}{G}}^0\) is the identity component of the normalizer of \(S_{i}\) in \(G\). 
Call the sequence \(S_{-1} \trianglelefteq S_0 \trianglelefteq \dots\) the \emph{normalizer chain} of the structure.
Since \(S_0\) is \(\Ad{h(\Lambda)}\)-invariant, so are all of the \(S_i\).

\begin{lemma}\label{lemma:normalizing.sequence}
Consider an   \((X,G)\)-structure on a complex torus \(T\).
The groups \(S_0 \trianglelefteq S_1 \trianglelefteq \dots \) in the normalizer chain of that structure are solvable connected complex Lie groups with abelian quotients \(S_{i+1}/S_i\).
Each of these groups acts locally freely on the image of the developing map of the \((X,G)\)-structure.
\end{lemma}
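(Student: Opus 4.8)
The plan is to prove the three assertions — solvability, abelianness of successive quotients, and local freeness of the action on $\delta(V)$ — by induction along the normalizer chain, using Lemma~\ref{lemma:algebraic.description} as the base case and a descent-to-the-torus argument for the inductive step. First I would record the base case: Lemma~\ref{lemma:algebraic.description} already tells us $S_0 = Z_X^0$ is abelian (hence solvable), and that it acts locally freely on the image $X_0 \defeq \delta(V)$ of the developing map; note $S_{-1} = \{1\}$ is trivially normal in $S_0$ with abelian quotient $S_0$. So the induction hypothesis at stage $i$ reads: $S_i$ is a connected solvable complex Lie group acting locally freely on $X_0$, and $S_{i-1} \trianglelefteq S_i$ with $S_i/S_{i-1}$ abelian.

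For the inductive step, the key point is that $S_{i+1}$ normalizes $S_i$, so it acts on the Lie algebra $\LieS_i$ by the adjoint representation, giving a morphism $S_{i+1} \to \Aut{\LieS_i}$ whose kernel is (the identity component of) the centralizer $Z_{S_{i+1}}(S_i)$. The crux is to control the image: I would argue that the image of $S_{i+1}$ in $\GL{\LieS_i}$ is abelian, so that $S_{i+1}$ is an extension of an abelian group by $Z_{S_{i+1}}(S_i)$, and then to bootstrap solvability from there. The mechanism for abelianness is the same one that drives Lemma~\ref{lemma:algebraic.description}: every vector field in $\LieS_{i+1}$ is a complete holomorphic vector field on $X$ which, pulled back by $\delta$, descends to a holomorphic (hence translation) vector field on $T$; thus $\LieS_{i+1}$ injects, via the linearization of the $\delta$-equivariant action at $0$, into a space of commuting translation vector fields on $V$. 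Concretely, the action of $S_{i+1}$ on $X_0$ lifts through $\delta$ to an action on $V$ by affine transformations (as in the description of $Z_V$: $f(v) = b + av$), and the holomorphic (not merely the linear) part of this must be constant on the torus, forcing the linear parts $a$ to lie in a common group — combined with the fact that the $\delta$-pullback identifies $\LieS_{i+1}$ with a subspace of $V$ on which $S_i$ acts by translations, one extracts that $[\LieS_{i+1}, \LieS_{i+1}] \subseteq \LieS_i$, which gives both $S_{i+1}/S_i$ abelian and, with the inductive solvability of $S_i$, solvability of $S_{i+1}$. Local freeness of $S_{i+1}$ on $X_0$ then follows from the no-vanishing principle for vector fields on a torus exactly as in the proof of Lemma~\ref{lemma:algebraic.description}: a vector field in $\LieS_{i+1}$ vanishing at a point of $X_0$ pulls back to a translation vector field on $V$ vanishing somewhere, hence identically zero, hence zero on all of $X_0$ by analytic continuation.

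The main obstacle I anticipate is making the bracket inclusion $[\LieS_{i+1},\LieS_{i+1}] \subseteq \LieS_i$ genuinely rigorous, i.e.\ turning the heuristic "the transcendental part of the action is constant on the torus" into a clean statement. The subtlety is that $S_{i+1}$ acts on $X_0$ but only $S_i$ is known a priori to act in a way that lifts nicely to translations of $V$; for $S_{i+1}$ one must use that it normalizes $S_i$, so it permutes the $S_i$-orbits, which are the leaves of the foliation on $X_0$ pulled back from the translation foliation by $\LieS_i \subset V$, and then argue that the induced action of $S_{i+1}$ on the (local) leaf space is by translations of a torus quotient and hence abelian. An alternative, perhaps cleaner route that sidesteps the geometry: observe that $S_{i+1} \subseteq \normalizer{S_i}{G}$ is $\Ad{h(\Lambda)}$-invariant, reduce via a finite cover (as in Lemma~\ref{lemma:translation invariant}) to the case $h(\Lambda)$ centralized, and then use that $\LieS_{i+1} \cap \LieH = 0$ together with the developing map's equivariance to embed $\LieS_{i+1}$ as a complex subspace of $V$ whose Lie bracket, computed in $V \rtimes (\text{something})$, is forced into $\LieS_i$ by the torus having only translation vector fields — this is essentially the Matsushima–Morimoto rigidity used in Lemma~\ref{lemma:X.is.G}. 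Either way, once $S_{i+1}/S_i$ is abelian the rest is bookkeeping, and the induction closes.
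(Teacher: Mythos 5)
Your overall strategy is the paper's: induct along the chain with Lemma~\ref{lemma:algebraic.description} as the base case, and obtain \(\lb{\LieS_{i+1}}{\LieS_{i+1}}\subseteq\LieS_i\) by descending the \(\LieS_{i+1}\) vector fields to the torus modulo \(\LieS_i\) and using that holomorphic vector fields on a torus are commuting translations. Your first route for making this rigorous (the induced \(S_{i+1}\)-action transverse to the \(S_i\)-foliation) is exactly what the paper does: the bracket mod \(\LieS_i\) is a holomorphic map from \(T\) to the fixed vector space \(\Lm{2}{\LieS_{i+1}/\LieS_i}^*\otimes\pr{\LieS_{i+1}/\LieS_i}\), hence constant and equal to the structure constants; a linear splitting \(V=\LieS_i\oplus\LieS_i^{\perp}\) then represents each class of \(\LieS_{i+1}/\LieS_i\) by an honest translation field on \(T\), whose brackets vanish, so the quotient is abelian and solvability follows.

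The one step that fails as literally written is your local-freeness argument. An element \(v\in\LieS_{i+1}\) does \emph{not} pull back under \(\delta\) to a translation field on \(V\) (if it did, \(\LieS_{i+1}\) would already be abelian and the bracket discussion would be vacuous); it does not even descend to \(T\), only its class mod \(\LieS_i\) does. So ``vanishes somewhere, hence identically zero'' cannot be applied to \(\delta^*v\) directly. The repair is the same splitting you already need for the bracket computation: the normal component of \(v\) relative to \(V=\LieS_i\oplus\LieS_i^{\perp}\) is a translation field on \(T\), hence vanishes at one point if and only if it vanishes everywhere, which happens if and only if \(v\in\LieS_i\); the inductive hypothesis that \(S_i\) acts locally freely on \(\delta(V)\) then forces \(v=0\). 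With that two-step version your induction closes and coincides with the paper's proof.
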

\begin{proof}
Lemma~\vref{lemma:algebraic.description} proves that \(S_0=Z^0_X\) is abelian and acts locally freely at every point in \(\delta(V)\).
Each element of \(\LieS_1 \subset \LieG\) is a vector field on \(X\), whose flow preserves the Lie subalgebra \(\LieS_0\).
Such a vector field pulls back via the local biholomorphism \(\delta\) to a vector field on \(V\), whose flow preserves the translations \(\LieS_0 = \LieZ \subset V\).
The \(\LieS_1\) vector fields on \(V\) locally descend to \(T\), but globally they only do so modulo transformations of \(\Lambda\), which add elements of \(\LieS_0\).
The Lie brackets of the \(\LieS_1\) vector fields are only defined on \(T\) modulo the \(\LieS_0\) vector fields.
The part of the bracket lying in the quotient \(\LieS_1/\LieS_0\) is a holomorphic map \(T \to \Lm{2}{\LieS_1/\LieS_0}^* \otimes \pr{\LieS_1/\LieS_0}\), so constant.
This constant gives the structure constants  of the Lie algebra \(\LieS_1/\LieS_0\). 
The normal bundle of the  foliation inherited  in \(T\) by the \(S_0\)-action admits an \(S_0\)-invariant integrable subbundle with fiber isomorphic to \(\LieS_1/\LieS_0\): it is a partial transverse structure to the foliation modelled on \(S_1/S_0 \).

Split the tangent bundle of \(T\) by some linear splitting \(V=\LieS_0 \oplus \LieS_0^{\perp}\).
Since \(S_0\) acts by translations, this splitting is preserved.
The normal bundle to the foliation sits inside the tangent bundle of \(T\), and every vector field from \(\LieS_1/\LieS_0\) is represented as a vector field on the torus, hence a translation field.
The brackets of these vector fields on the torus agree, modulo the constant translations in \(\LieS_0\), with those of \(\LieS_1/\LieS_0\).
But Lie brackets of holomorphic vector fields on the torus are trivial, so \(\LieS_1/\LieS_0\) is abelian.

Each of the vector fields arising from this splitting is translation invariant, so has vanishing normal component at a point in \(T\) just when its normal component vanishes everywhere on \(T\), i.e. just when the associated element in \(\LieS_1\) belongs to \(\LieS_0\).
An element of \(\LieS_1\) pulls back by the developing map to \(V\) to agree with an element of \(\LieS_0\) at some point just when they agree at every point of \(V\), and so they agree at every point of \(\delta(V)\).
In other words, \(\LieS_1\) acts locally freely on \(\delta(V)\).
The same argument holds by induction for the successive subgroups \(\LieS_i \subset \LieS_{i+1}\).
\end{proof}

\begin{proposition}
Suppose that \((X,G)\) is a complex homogeneous space and that \(T\) is a complex torus with a holomorphic \((X,G)\)-structure.
As above, let \(S_{-1} \subset S_0 \subset S_1 \subset \dots \subset G\) be the normalizer chain of the holonomy morphism and let \(S=\bigcup_i S_i\), i.e. \(S\) be the terminal subgroup in the chain of connected complex subgroups.
Then \(\dimC{S} \le \dimC{X}\) with equality if and only if the \((X,G)\)-structure is translation invariant.
\end{proposition}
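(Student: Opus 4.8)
The plan is to show the two inequalities and the characterization by combining Lemma~\ref{lemma:normalizing.sequence} with Lemma~\ref{lemma:algebraic.description}.

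First I would prove $\dimC{S} \le \dimC{X}$. By Lemma~\ref{lemma:normalizing.sequence}, every group $S_i$ in the normalizer chain acts locally freely on $\delta(V)$, the image of the developing map. Since the chain $S_{-1} \subset S_0 \subset S_1 \subset \dots$ is an increasing chain of connected complex subgroups of the finite-dimensional group $G$, it stabilizes: there is an index $i_0$ with $S_{i_0} = S_{i_0+1} = \dots = S$, and $S$ itself acts locally freely on $\delta(V)$. A group acting locally freely on a complex manifold has all orbits of dimension equal to $\dimC{S}$, and these orbits are immersed complex submanifolds of $\delta(V) \subset X$; hence $\dimC{S} \le \dimC X$.

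Next, the easy direction of the equivalence: if the $(X,G)$-structure is translation invariant, then by Lemma~\ref{lemma:algebraic.description} we have $\dimC \LieZ = \dimC T = \dimC X$, and since $\LieZ = \Lie{S}_0 \subset \Lie{S}$, this forces $\dimC S \ge \dimC \LieZ = \dimC X$, hence equality with the inequality just proved.

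The substantive direction is the converse: assume $\dimC S = \dimC X = \dimC T$ and deduce translation invariance. Here is where I would work. Since $S$ acts locally freely on $\delta(V)$ with orbits of dimension $\dimC S = \dimC X$, the orbits are open in $\delta(V)$; as $\delta(V)$ is connected (image of the connected space $V$), $\delta(V)$ is a single $S$-orbit, and $S$ acts on it with discrete stabilizer. Pulling back the $\Lie{S}$ vector fields by the local biholomorphism $\delta$ produces, as in the proof of Lemma~\ref{lemma:normalizing.sequence}, a Lie algebra of holomorphic vector fields on $V$ of dimension $\dimC X = \dimC V$ that span $TV$ at each point. Now I would argue, by induction along the chain just as in Lemma~\ref{lemma:normalizing.sequence}, that \emph{all} of these vector fields on $V$ are in fact constant translation fields: the $\Lie{S}_0$ fields are translations by Lemma~\ref{lemma:algebraic.description}, and each successive quotient $\Lie{S}_{i+1}/\Lie{S}_i$ is represented, after the $S_0$-invariant linear splitting of $V$, by translation-invariant (hence constant) vector fields on the torus $T$, since holomorphic vector fields on a complex torus are translations. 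Thus $\Lie{S}$ is realized as a $\dimC V$-dimensional space of constant vector fields on $V$, i.e.\ $\Lie{S} = V$ acting by translations, and in particular $\LieZ = \Lie{S}_0$ sits inside this translation action. The point I must then extract is that $\LieZ$ itself has dimension $\dimC V$: the local freeness of $S$ on $\delta(V)$ means the holonomy-invariant part, namely $S_0 = Z_X^0$, already exhausts the translations --- more precisely, since $S$ acts by translations on $V$ and commutes appropriately, the normalizer chain collapses, forcing $S_0 = S$, so $\dimC \LieZ = \dimC V = \dimC T$, and Lemma~\ref{lemma:algebraic.description}(2) gives translation invariance.

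The main obstacle I anticipate is precisely this last collapse $S_0 = S$: a priori $S$ could be strictly larger than $S_0 = Z_X^0$, with the extra dimensions coming from normalizer steps that are \emph{not} holonomy-invariant elementwise but only up to $\Lambda$-translations. The resolution is that once every $\Lie{S}_i$ field is a constant translation field on $V$, it automatically commutes with $h(\Lambda)$ (translations commute with everything in the translation group, and the holonomy action on the torus is by translations composed with the finite linear part, which acts trivially on constant fields up to that finite group), so the whole of $\Lie{S}$ lies in the centralizer $\centralizer{h(\Lambda)}{G}$, whence in $\LieZ$ by Lemma~\ref{lemma:centralizer}; combined with $\LieZ \subset \Lie{S}$ this gives $\LieZ = \Lie{S}$ and closes the argument. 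Getting this commutation statement stated cleanly --- rather than hand-waved --- is the delicate bookkeeping step.
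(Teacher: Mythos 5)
Your inequality $\dimC{S}\le\dimC{X}$ and the easy direction (translation invariance gives $\dimC{\LieS_0}=\dimC{X}$, hence equality) are correct and agree with the paper. The converse contains a genuine gap. You assert that, by induction along the chain, every vector field obtained by pulling back $\LieS$ through $\delta$ is a \emph{constant} field on $V$, ``since each successive quotient $\LieS_{i+1}/\LieS_i$ is represented by constant vector fields on the torus.'' But Lemma~\ref{lemma:normalizing.sequence} only gives that the \emph{class} of an $\LieS_{i+1}$-field modulo $\LieS_i$ is constant: writing the pulled-back field as $\tilde\xi=c+f(v)$ with $c$ constant in the chosen complement and $f$ valued in $\LieS_i$, the component $f$ need not be constant. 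It is not $\Lambda$-periodic; it only satisfies $f(v+\lambda)-f(v)=\mathrm{const}$, so $f$ is affine, $f(v)=Av+b$, and neither local freeness nor the normalizing condition $[\tilde\xi,\LieS_i]\subset\LieS_i$ forces $A=0$ (for instance $\partial_2+v_2\partial_1$ normalizes $\mathbb{C}\partial_1$, and together they act locally freely on $\C{2}$, yet it is not constant). Since constancy of all of $\LieS$ is essentially equivalent to the conclusion $\LieS=\LieS_0=\LieZ$ you are trying to reach, the induction is circular precisely at the decisive step. Your closing observation --- that a genuinely constant field on $V$ is $\Lambda$-invariant, hence corresponds to an element of $\LieG$ centralizing $h(\Lambda)$ and so lies in $\LieZ$ --- is fine, but it rests entirely on the unproven constancy.

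The paper closes this gap by a different mechanism: once $\dimC{S}=\dimC{X}$ and $S$ acts locally freely on $\delta(V)$, it replaces $X$ by $\delta(V)$ and $G$ by $S$ (modulo elements acting trivially there), so the stabilizer becomes discrete, and then invokes Lemma~\ref{lemma:X.is.G}. The essential input of that lemma is the Matsushima--Morimoto theorem that a holomorphic map from a compact complex torus to a simply connected complex Lie group is constant; this global compactness argument is what shows the developing map is a Lie group morphism (equivalently, that the $\LieS$-fields straighten to translations), and it has no counterpart in your purely infinitesimal argument. To salvage your route you would need to import that step, or find some other global argument killing the linear parts $A$.
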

\begin{proof}
By lemma~\vref{lemma:normalizing.sequence}, \(S\) acts locally freely on \(\delta(V)\). Consequently, \(\dimC{S} \le \dimC{X}\).  If \(\dimC{S} < \dimC{X}\), then \(\dimC{S_0}=\dimC{Z_X^0} < \dimC{X}\) and
thus the \((X,G)\)-structure is not translation invariant.

Assume now that \(\dimC{S} = \dimC{X}\). Replace \(G\) by \(S\) (modulo any elements of \(S\) acting trivially on \(\delta(V)\)) and \(X\) by \(\delta(V)\) to arrange that \(G\) acts on \(X\) locally freely, so \(\dim G=\dim X\) and apply lemma~\vref{lemma:X.is.G}.
\end{proof}

\begin{theorem}\label{theorem:nilpotent}
If  \((X,G)\) is a complex homogeneous space and \(G\) is nilpotent then every holomorphic \((X,G)\)-structure on any complex torus is translation invariant.
\end{theorem}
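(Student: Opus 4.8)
\section*{Proof proposal}

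The plan is to feed the nilpotency hypothesis into the normalizer chain of Lemma~\ref{lemma:normalizing.sequence} and the proposition immediately preceding this theorem, using the following elementary Lie-theoretic fact: \emph{in a finite dimensional nilpotent Lie algebra every proper subalgebra is strictly contained in its normalizer}. Concretely, let \(\LieG\) be nilpotent with lower central series \(\LieG = \LieG^1 \supseteq \LieG^2 \supseteq \dots\), \(\LieG^{k+1}=\lb{\LieG}{\LieG^k}\), and let \(\LieS \subsetneq \LieG\) be a proper subalgebra. Choosing \(j\) largest with \(\LieG^j \not\subseteq \LieS\) (so \(\LieG^{j+1}\subseteq \LieS\)), one has \(\lb{\LieG^j}{\LieS} \subseteq \LieG^{j+1} \subseteq \LieS\), hence \(\LieG^j\) lies in the normalizer of \(\LieS\) in \(\LieG\) but not in \(\LieS\); so that normalizer strictly contains \(\LieS\).

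\emph{Step 1: the normalizer chain terminates at \(G\).} All the groups \(S_{-1}=\curly{1}\), \(S_0=Z_X^0\), \(S_{i+1}=\pr{\normalizer{S_i}{G}}^0\) are connected, so each is determined by its Lie algebra, and \(\LieS_{i+1}\) is the normalizer of \(\LieS_i\) in \(\LieG\). By the fact above, \(\LieS_i \subsetneq \LieS_{i+1}\) as long as \(\LieS_i \neq \LieG\); since \(\dimC{\LieG} < \infty\), the chain strictly increases until it reaches \(\LieG\), i.e. the terminal subgroup is \(S=\bigcup_i S_i = G\). (Note no lower bound on \(\dim Z_X^0\) is needed: even if \(S_0=\curly{1}\), already \(S_1 = \pr{\normalizer{\curly{1}}{G}}^0 = G\).)

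\emph{Step 2: conclude.} By Lemma~\ref{lemma:normalizing.sequence}, \(S\) acts locally freely on the image \(\delta(V)\) of the developing map, which is open in \(X\); hence \(\dimC G = \dimC S \le \dimC X\). Since \(G\) acts transitively on \(X\) we also have \(\dimC G \ge \dimC X\), so \(\dimC G = \dimC X\), i.e. the stabilizer \(H\) is discrete. Now Lemma~\vref{lemma:X.is.G} applies and gives that every holomorphic \((X,G)\)-structure on any complex torus is translation invariant. (Equivalently, the equality case \(\dimC S = \dimC X\) of the preceding proposition yields the conclusion at once.)

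\emph{Where the difficulty lies.} There is essentially no analytic work left: everything hard has been absorbed into Lemma~\ref{lemma:normalizing.sequence} (that each \(S_i\) acts locally freely on \(\delta(V)\), which forces \(\dimC{S_i}\le \dimC X\)) and Lemma~\ref{lemma:X.is.G}. The only point requiring care is the purely algebraic Step~1 — one must apply the normalizer-grows property to the chain of \emph{identity components} and check that, since every \(S_i\) is connected, passing to Lie algebras is legitimate, so that strict inclusion of subalgebras indeed forces the chain to climb all the way up to \(\LieG\). This step uses nilpotency of \(G\) in an essential way: for non-nilpotent \(G\) the chain may stabilize at a proper self-normalizing subgroup, which is precisely why the general conjecture remains open.
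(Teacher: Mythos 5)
Your proof is correct and follows essentially the same route as the paper: the paper's entire argument is the one-line observation that the normalizer chain of a nilpotent group strictly increases until it reaches \(G\) (citing Borel for the normalizer-grows property you prove directly via the lower central series), after which the equality case \(\dimC S=\dimC X\) of the preceding proposition gives translation invariance. Your explicit verification that passing to Lie algebras of the connected groups \(S_i\) is legitimate, and your direct proof of the normalizer-grows fact, simply fill in details the paper leaves to the reference.
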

\begin{proof}
The normalizer chain always increases in dimension until it reaches the dimension of \(G\) \cite{Borel:1991} p. 160.
\end{proof}

\begin{example}
For the surfaces \((X,G)\) in example~\vref{example:B.beta.1}, and even for transcendental \(B\beta{1}\)-surfaces and \(B\beta{2}\)-surfaces \cite{McKay:2014}, \(G\) is nilpotent.
The nilpotent complex homogeneous surfaces \((X,G)\) are \(B\beta{1}\), 
\(B\beta{2}\), 
\(D1\), 
\(D1_1, \dots, D1_5\),
\(D2\), 
\(D2_1, \dots, D2_{14}\) \cite{McKay:2014}.
Therefore for all of these surfaces \((X,G)\), all \((X,G)\)-structures on complex tori are translation invariant.
\end{example}

\section{Algebraic dimension}

If \((X,G)\) is a complex algebraic homogeneous space then any  holomorphic \((X,G)\)-structure is a holomorphic rigid geometric structure in Gromov's sense \cite{Dumitrescu:2001} and also
a  (flat) Cartan geometry  (see the definition in section \ref{section:cartan geometries}).

Recall that the \emph{algebraic dimension} of a complex manifold \(M\) is the transcendence degree of the field of meromorphic functions of \(M\) over the field of complex numbers. A generic torus
has algebraic dimension zero, meaning that all its meromorphic functions are constant~\cite{Ueno:1975}.

\begin{lemma}\label{lemma:symmetries.torus}
The identity component of the symmetry group of any holomorphic geometric structure on a complex torus \(T\) acts as a subtorus  \(T_0\) of dimension at least the algebraic codimension of \(T\) (i.e. \(n-\kappa\), where \(n=\dimC{T}\) and \(\kappa\) is the algebraic dimension of \(T\)).

The quotient of \(T\) by the subtorus \(T_0\) is an abelian variety (which coincides with the algebraic reduction of \(T\) if and only if \(T_0\)  is of complex dimension \(n-\kappa\)).
\end{lemma}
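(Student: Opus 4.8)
The plan is to realize the identity component of the symmetry group concretely as a subtorus of $T$, to manufacture translational symmetries out of the algebraic reduction of $T$, and then to read off the structure of the quotient.

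First I would record the shape of the symmetries. Since $T$ is parallelizable, the natural bundle $E\to T$ of which a geometric structure $\phi$ of the given type is a holomorphic section is trivial, $E\cong T\times Z$ with $Z$ the (quasi-projective) variety parametrizing structures of that type; thus $\phi$ amounts to a holomorphic map $\phi\colon T\to Z$. Every biholomorphism of $T$ isotopic to the identity is a translation, because the identity component of the biholomorphism group of a complex torus is the torus acting on itself by translations. Hence the identity component of the symmetry group of $\phi$ consists of translations; the set of $v\in T$ with $\phi(x+v)=\phi(x)$ for all $x$ is a subgroup of $T$, and it is closed since the condition is closed under limits, so its identity component is a subtorus $T_0\subseteq T$. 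This $T_0$ is the subtorus of the statement.

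The core of the argument is the inequality $\dimC{T_0}\ge n-\kappa$. The image $\phi(T)$ is a compact irreducible analytic subset of the algebraic variety $Z$, hence projective by Chow's theorem, so $\phi\colon T\to\phi(T)$ is a dominant meromorphic map from $T$ onto a projective, in particular Moishezon, variety. By the universal property of the algebraic reduction $\pi\colon T\to T_{\mathrm{alg}}$, every such map factors through $\pi$ over a Zariski-dense open set, so $\phi$ is constant along the fibres of $\pi$ over a dense open set. Writing $\pi$ as the quotient of $T$ by a subtorus $T_{\mathrm{red}}$ with $\dimC{T_{\mathrm{red}}}=n-\kappa$, this says $\phi(x+v)=\phi(x)$ for $v\in T_{\mathrm{red}}$ and $x$ ranging over a dense open set; since the locus of such $(x,v)$ is closed in $T\times T_{\mathrm{red}}$, it is all of $T\times T_{\mathrm{red}}$. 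Therefore $T_{\mathrm{red}}$ acts by symmetries of $\phi$, so $T_{\mathrm{red}}\subseteq T_0$ and $\dimC{T_0}\ge n-\kappa$. The delicate point of this step is the factorization through the algebraic reduction: it presupposes that the parameter variety $Z$ is algebraic, which for a general holomorphic rigid geometric structure is precisely the kind of input furnished by~\cite{Dumitrescu:2010b,Dumitrescu:2011}; granting that, the remainder of the step is formal. This is the step I expect to be the main obstacle.

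It remains to analyse the quotient. Because $T_0\supseteq T_{\mathrm{red}}$, the torus $T/T_0$ is a quotient of $T/T_{\mathrm{red}}=T_{\mathrm{alg}}$, hence a quotient of an abelian variety, hence itself an abelian variety. If $\dimC{T_0}=n-\kappa=\dimC{T_{\mathrm{red}}}$, then $T_0=T_{\mathrm{red}}$, since they are connected subtori of the same dimension with one contained in the other; hence $T\to T/T_0$ is literally the algebraic reduction of $T$. Conversely, if $T\to T/T_0$ is the algebraic reduction then $\dimC{(T/T_0)}=\kappa$, so $\dimC{T_0}=n-\kappa$. This establishes the last clause and completes the argument.
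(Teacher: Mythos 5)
Your proof is correct, but it takes a genuinely different route from the paper's. The paper's proof augments the given structure \(\phi\) with the holomorphic parallelization of \(T\) to obtain a \emph{rigid} holomorphic geometric structure, and then invokes the local homogeneity theorem of \cite{Dumitrescu:2010b,Dumitrescu:2011} (the local symmetry pseudogroup of a holomorphic rigid geometric structure on a compact complex manifold has orbits of codimension at most the algebraic dimension); since the resulting local Killing fields preserve the parallelization they are translations, hence extend globally, and the closedness of the symmetry group makes them a subtorus. You instead use the parallelization to trivialize the natural bundle of which \(\phi\) is a section, so that \(\phi\) becomes a holomorphic map \(T\to Z\) into the algebraic fibre; you then combine Chow's theorem, the factorization of dominant meromorphic maps onto Moishezon targets through the algebraic reduction, and Ueno's description of the algebraic reduction of a torus as a subtorus quotient \(T\to T/T_{\mathrm{red}}\) onto an abelian variety, to exhibit the explicit subtorus \(T_{\mathrm{red}}\subseteq T_0\) of dimension \(n-\kappa\). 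Your route is more elementary and self-contained for tori --- it bypasses the rigidity machinery entirely --- and it actually establishes the second paragraph of the lemma, which the paper's proof does not address; the paper's route is the one that generalizes to nonabelian parallelizable quotients \(P/\pi\) in Theorem~\ref{theorem:algebraic.dimension.zero}, where your translation-based trivialization argument has no direct analogue. One small correction: the algebraicity of the fibre \(Z\) is not an input ``furnished by'' \cite{Dumitrescu:2010b,Dumitrescu:2011} --- it is part of the definition of a geometric structure of algebraic type in Gromov's sense, an assumption both proofs rely on; what those references furnish is the local homogeneity theorem, which your argument does not need.
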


\begin{proof}
The pair of the holomorphic geometric structure and the translation structure (the holomorphic parallelization) of \(T\)  is a holomorphic rigid geometric structure on \(T\). 
The symmetry pseudogroup of any such structure acts transitively on sets of codimension \(\kappa\) \cite{Dumitrescu:2010b,Dumitrescu:2011}.
Therefore near each point there are locally defined holomorphic vector fields preserving both the holomorphic geometric structure and the translation structure (the holomorphic parallelization), acting with orbits of dimension \(\ge \kappa\).
Each of these vector fields preserves the translation structure, so is a translation.
Translations on \(T\)  extend globally, and give global symmetries.
The family of symmetries is Zariski closed in the complex analytic Zariski topology, so forms a subtorus.
\end{proof}

\begin{corollary}  \label{corollary:symmetries.torus} Suppose that \((X,G)\) is a complex algebraic homogeneous space and  \(T\) is a complex torus of algebraic dimension zero. Every holomorphic \((X, G) \)-structure on \(T\) is translation invariant.
\end{corollary}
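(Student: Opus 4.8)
The plan is to combine lemma~\ref{lemma:symmetries.torus} with the dictionary provided by lemma~\ref{lemma:algebraic.description}. Since \((X,G)\) is a complex algebraic homogeneous space, any holomorphic \((X,G)\)-structure on \(T\) is a holomorphic rigid geometric structure in Gromov's sense (equivalently a flat holomorphic Cartan geometry), as recalled at the start of this section, so lemma~\ref{lemma:symmetries.torus} applies to it.

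Apply that lemma with \(\kappa=0\): the identity component of the symmetry group acts as a subtorus \(T_0\subseteq T\) of complex dimension at least the algebraic codimension \(n-\kappa=n=\dimC{T}\). As \(T_0\subseteq T\), this forces \(T_0=T\), so every translation of \(T\) is a symmetry of the \((X,G)\)-structure. Hence the identity component \(Z^0_T\) of the automorphism group contains all of \(T\) acting by translations, so \(Z^0_T=T\) and \(\dimC{\LieZ}=\dimC{T}\). By the equivalence of (1) and (2) in lemma~\ref{lemma:algebraic.description}, the \((X,G)\)-structure on \(T\) is translation invariant.

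There is no real obstacle remaining here: the analytic input — transitivity of the symmetry pseudogroup of a holomorphic rigid geometric structure on sets of codimension \(\kappa\) \cite{Dumitrescu:2010b,Dumitrescu:2011} — has already been used in proving lemma~\ref{lemma:symmetries.torus}, and the passage from ``every translation is a symmetry'' to ``translation invariant'' is precisely the content of lemma~\ref{lemma:algebraic.description}. The only subtlety, already handled inside lemma~\ref{lemma:symmetries.torus}, is that \cite{Dumitrescu:2010b,Dumitrescu:2011} is applied to the pair consisting of the \((X,G)\)-structure together with the translation parallelization of \(T\), so that the locally defined infinitesimal symmetries one obtains are automatically translations, and hence extend to global symmetries of \(T\).
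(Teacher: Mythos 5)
Your proposal is correct and follows the same route as the paper: apply Lemma~\ref{lemma:symmetries.torus} with \(\kappa=0\) to the \((X,G)\)-structure (noting it is rigid since \((X,G)\) is algebraic), conclude the subtorus of symmetries is all of \(T\), and hence the structure is translation invariant. Your additional appeal to the equivalence of (1) and (2) in Lemma~\ref{lemma:algebraic.description} just makes explicit the final step the paper leaves implicit.
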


\begin{proof}  Here the holomorphic geometric structure in the previous proof is the \((X,G)\)-structure. If \(T \) is of algebraic dimension zero, then the subtorus of common symmetries of the \((X,G)\)-structure and of  the translation structure of \(T \) acts transitively. Consequently, the \((X,G)\)-structure is translation invariant.
\end{proof}

The results from  \cite{Dumitrescu:2010b,Dumitrescu:2011} that we used in the proof of lemma~\ref{lemma:symmetries.torus} hold not only for tori, but  for all complex manifolds: any holomorphic rigid geometric structure  (or holomorphic Cartan geometry modelled on an algebraic homogeneous space)  on a complex manifold of algebraic dimension zero is locally homogeneous on a dense open set (away from a nowhere dense analytic subset of positive codimension).
 
With the same method we can then prove the following:

\begin{theorem} \label{theorem:algebraic.dimension.zero} Let \(M \defeq P / \pi\) be a  compact quotient of a complex Lie group \(P\) by a lattice \(\pi\).  If \(M\) is of algebraic dimension zero, then any holomorphic geometric structure \(\phi\) on \(M\) pulls back to a 
translation invariant geometric structure on \(P\). Consequently, for a complex algebraic homogeneous space \((X,G)\), any \((X,G)\)-structure on \(M\) pulls back to \(P\) to a right invariant \((X,G)\)-structure.
\end{theorem}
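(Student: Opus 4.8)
## Proof proposal for Theorem~\ref{theorem:algebraic.dimension.zero}

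The plan is to run the same argument as in Lemma~\ref{lemma:symmetries.torus} and Corollary~\ref{corollary:symmetries.torus}, but with the translation structure of the torus replaced by the right-invariant parallelization of $P$. First I would form, on $M = P/\pi$, the pair $(\phi, \omega)$ where $\omega$ is the holomorphic parallelization of $TM$ coming from the right-invariant Maurer--Cartan form of $P$ (this descends to $M$ since right translations commute with the left $\pi$-action). This pair is a holomorphic rigid geometric structure on $M$ in Gromov's sense. Since $M$ has algebraic dimension zero, the results of \cite{Dumitrescu:2010b,Dumitrescu:2011} apply: the Killing Lie algebra of local holomorphic vector fields preserving $(\phi,\omega)$ acts transitively on a dense open subset of $M$, in fact (algebraic dimension being zero) on all of $M$. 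Each such local vector field preserves $\omega$, hence is locally a left-invariant vector field on $P$; left-invariant vector fields are complete and globally defined, so the local symmetries integrate to a global left $P$-action (more precisely, an action of a quotient of $P$) on $M$ preserving $\phi$. Because this action is transitive and preserves $\omega$, the stabilizer of a point acts trivially on $TM$ at that point and hence, by rigidity/connectedness, trivially; so the action is simply transitive, identifying $M$ with a homogeneous space on which $\phi$ is invariant. Pulling back to $P$, this exactly says $\phi$ lifts to a right-invariant geometric structure on $P$.

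For the second assertion, specialize $\phi$ to an $(X,G)$-structure with $(X,G)$ algebraic. By the first part, the developing map $\delta \colon \tilde{M} = P \to X$ and holonomy are invariant under the right $P$-action on $P$, i.e. under right translations. Translating to the language of Lemma~\ref{lemma:algebraic.description}(3)--(4) adapted to $P$ in place of $V$: the holonomy morphism extends to a morphism of complex Lie groups $h \colon P \to G$ for which $\delta(p) = h(p)x_0$, with image transverse to $H = G^{x_0}$. Concretely, one shows as in Lemma~\ref{lemma:X.is.G} that $\Delta(p,q) \defeq \delta(p)^{-1}\delta(q)^{-1}\delta(pq)$, after lifting to the universal cover of $G$ and using the discreteness forced on the relevant stabilizer together with the vanishing of holomorphic maps from a compact quotient $M$ to a simply connected complex Lie group \cite{Matsushima/Morimoto:1960}, is identically trivial; hence $\delta$ is a group morphism and the $(X,G)$-structure is right invariant on $P$.

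The main obstacle I expect is purely bookkeeping: I must check that the descent of the right-invariant parallelization of $P$ to $M$ genuinely yields a \emph{rigid} geometric structure of the type covered by \cite{Dumitrescu:2010b,Dumitrescu:2011} (a holomorphic parallelization is the prototypical rigid structure, so this is standard), and that combining it with $\phi$ keeps rigidity (adding structures never destroys rigidity). The only genuine subtlety is passing from ``local Killing fields act transitively'' to ``a global group of symmetries acts simply transitively'': here one uses that a local vector field preserving $\omega$ \emph{is} left-invariant, so it is automatically complete and extends to all of $P$, and that the resulting global action is free because its stabilizers are trivial (a symmetry preserving $\omega$ and fixing a point is the identity). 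Once simple transitivity is in hand, both conclusions follow formally.
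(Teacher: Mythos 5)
Your proposal follows essentially the same route as the paper: combine \(\phi\) with the holomorphic parallelization of \(TM\) to form a rigid structure \(\phi'\), invoke the algebraic-dimension-zero local homogeneity theorem of \cite{Dumitrescu:2010b,Dumitrescu:2011}, observe that the local Killing fields of \(\phi'\) preserve the parallelization and hence lift to invariant vector fields on \(P\), and conclude that the flows of all such fields preserve the pullback of \(\phi\). Two slips are worth correcting. First, the flows of the vector fields commuting with the parallelization do \emph{not} descend to \(M=P/\pi\) (they are translations on the side on which \(\pi\) acts), so there is no global \(P\)-action on \(M\) and no identification of \(M\) with a \(P\)-homogeneous space; the integration step must be carried out on \(P\) itself, where these fields are globally defined and complete and their flows are exactly the translations asserted in the statement --- your left/right bookkeeping should be straightened out accordingly. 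Second, you should make explicit the step that converts ``the Killing algebra is transitive at one point'' into ``\emph{every} invariant vector field is a Killing field'': the Killing fields of \(\phi'\) form a linear subspace of the \(\dim P\)-dimensional space of parallelizing-invariant fields, and a subspace of a parallelization that spans the tangent space at a single point must be the whole space; this is the paper's key remark and it is only implicit in your write-up. Finally, your second paragraph is unnecessary and somewhat shaky: once the pullback of \(\phi\) to \(P\) is invariant under right translations, the same holds verbatim when \(\phi\) is an \((X,G)\)-structure, so there is no need to rerun the argument of Lemma~\ref{lemma:X.is.G} (whose hypothesis that the stabilizer is discrete need not hold here, and whose appeal to \cite{Matsushima/Morimoto:1960} is stated for tori rather than for general compact quotients \(P/\pi\)).
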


\begin{proof} We add together the geometric structure \(\phi\) and the holomorphic parallelization of \(TM\) to give a holomorphic rigid  geometric structure \(\phi'\). Then Corollary 2.2  in \cite{Dumitrescu:2011} shows that
\(\phi'\) is locally homogeneous on an open dense set in \(M\), in the sense that the local  holomorphic vector fields preserving both the holomorphic parallelization and \(\phi\) are transitive on an open dense set in \(M\).
But the local  holomorphic vector fields preserving the holomorphic parallelization (which is given by global right invariant vector fields) are those vector fields which are left invariant (they are locally defined on \(M\) and their pull back is globally defined on \(P\)). Hence, all left invariant vector fields on \(P\)  must preserve the pull back of 
\(\phi\); if not \(\phi'\) is not locally homogeneous on any open set. Left invariant vector fields generate right translation: consequently, the pull back of \(\phi\) on \(P\)  is invariant by right translation.

If \(\phi\) is defined by an \((X,G)\)-structure with \((X,G)\) a complex algebraic homogeneous space, then the pull back of the \((X,G)\)-structure to \(P\) is right invariant.
\end{proof}

Theorem~\ref{theorem:algebraic.dimension.zero} generalizes a result of Ghys dealing with  holomorphic tensors on \(SL(2,\C{})/\pi\)~\cite{Ghys:1995}.

\begin{theorem}
Consider  a compact complex manifold \(M\) of complex dimension \(n\), algebraic dimension zero and Albanese dimension \(n\).
Then \(M\) admits a holomorphic rigid geometric structure  (or a holomorphic Cartan geometry  modelled on an algebraic homogeneous space) if and only if \(M\) is a complex torus and the holomorphic rigid geometric structure (the Cartan geometry)  is translation invariant. 
\end{theorem}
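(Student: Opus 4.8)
The plan is to dispose of the easy implication and then reduce the hard one to a single geometric claim. The implication $(\Leftarrow)$ is immediate: a complex torus carries translation invariant holomorphic rigid geometric structures --- its tautological holomorphic parallelization, or a flat holomorphic affine connection --- which are also flat Cartan geometries. For the converse, the whole difficulty will be to prove that $M$ \emph{is} a complex torus: once that is known, $M$ has algebraic dimension $\kappa=0$, so Lemma~\ref{lemma:symmetries.torus} forces the symmetry subtorus of the structure to have dimension at least $n-\kappa=n$, hence to be all of $M$, so the structure is translation invariant (in the Cartan geometry case one may instead invoke Corollary~\ref{corollary:symmetries.torus}).

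So assume $M$ is compact complex of dimension $n$, of algebraic dimension $0$ and of Albanese dimension $n$, carrying a holomorphic rigid geometric structure $\phi$ (a holomorphic Cartan geometry modelled on an algebraic homogeneous space is, in particular, such a structure). Let $a\colon M\to\operatorname{Alb}(M)$ be the Albanese map and let $\omega_1,\dots,\omega_q$ be the pullbacks by $a$ of a basis of the translation invariant holomorphic $1$-forms on $\operatorname{Alb}(M)$; these are closed holomorphic $1$-forms on $M$ which, by the Albanese dimension hypothesis, span $T^*_xM$ for $x$ in a dense open set. Adjoining tensor fields to a rigid geometric structure preserves rigidity, so $\phi'\defeq(\phi,\omega_1,\dots,\omega_q)$ is a holomorphic rigid geometric structure; since $M$ still has algebraic dimension $0$, the results of \cite{Dumitrescu:2010b,Dumitrescu:2011} quoted above show that $\phi'$ is locally homogeneous on a dense open set $M^{*}\subset M$ whose complement is a nowhere dense analytic subset of positive codimension.

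Next I would extract the translation structure. At a generic point $x\in M^{*}$, some $n$ of the $\omega_i$ form a holomorphic coframe; being closed, in suitable local coordinates they read $dg_1,\dots,dg_n$. A local holomorphic vector field $K$ preserving $\phi'$ preserves each $\omega_i$, whence $d(Kg_j)=\LieDer_K dg_j=0$, so each $Kg_j$ is constant and $K$ is a constant-coefficient vector field in the coordinates $g_j$ --- a translation. Thus the local Killing algebra of $\phi'$ near $x$ is abelian of dimension at most $n$, hence, being transitive, exactly $n$-dimensional and equal to the translations; so $\phi'$ restricts on $M^{*}$ to the flat holomorphic translation structure, and in particular $a$ is a local biholomorphism onto its image there. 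The image $Y\defeq a(M)$ is an $n$-dimensional compact subvariety of the complex torus $\operatorname{Alb}(M)$ onto which $a$ is generically finite, so $Y$ has algebraic dimension $0$; by Ueno's analysis of compact subvarieties of complex tori \cite{Ueno:1975}, an $n$-dimensional such subvariety of algebraic dimension $0$ is a translate of an $n$-dimensional subtorus, so $Y$ is a complex torus and $a\colon M\to Y$ is a generically finite holomorphic surjection onto an $n$-torus.

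The final step --- and where I expect the real work to be --- is to show $a$ is unramified: then $a$, being proper and everywhere a local biholomorphism onto the connected manifold $Y$, is a finite covering, and $M$ is a complex torus, as required. The ramification locus of $a$ is the zero divisor $R$ of the globally defined holomorphic $n$-form $a^{*}\Omega_Y$, where $\Omega_Y$ is a translation invariant volume form on $Y$; on $M^{*}$ this coincides up to a constant with the flat volume form of the translation structure, hence is nowhere zero, so $R\subset M\setminus M^{*}$. The local Killing flows of $\phi'$ preserve $a^{*}\Omega_Y$ and act transitively on $M^{*}$, which already rules out $R$ meeting $M^{*}$; what remains --- the possibility that $R$ is a nonempty hypersurface entirely swallowed by the non-locally-homogeneous locus $M\setminus M^{*}$ --- is the main obstacle, and it is exactly here that the rigidity of $\phi$ must be used decisively, in the spirit of the non-existence of holomorphic rigid geometric structures on blow-ups of complex tori. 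Once $R=\emptyset$ is established, the proof concludes as indicated.
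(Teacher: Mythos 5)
You have correctly identified the crux, but you have not supplied it: your proof stops exactly at the step that does all the work, namely ruling out a ramification divisor $R$ of the Albanese map hiding inside the non-locally-homogeneous locus $M\setminus M^{*}$. This is not a minor loose end --- the blow-up of an algebraic-dimension-zero torus at a point satisfies every hypothesis of the theorem and has nonempty $R$ (the exceptional divisor), so the theorem is \emph{equivalent} to excluding this configuration, and appealing to ``the spirit of the non-existence of rigid structures on blow-ups of tori'' is circular. Up to that point your argument runs essentially parallel to the paper's (the paper works on $\operatorname{Alb}(M)$ minus the codimension-$\ge 2$ image of the degeneracy locus, descends $\phi$ there, adjoins the translation structure, and gets translation Killing fields from local homogeneity in algebraic dimension zero; your adjunction of the closed $1$-forms $\omega_i$ upstairs is an equivalent packaging), so the proposal as written has a genuine gap rather than a different proof.

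The paper closes the gap with two moves you are missing. First, the translation vector fields, pulled back to commuting holomorphic fields $\kappa_1,\dots,\kappa_n$ on the dense open set where the Albanese map is a biholomorphism, are local Killing fields of the holomorphic \emph{rigid} analytic structure, and therefore extend to all of $M$ by the Amores--Nomizu extendability theorem \cite{Amores:1979,Nomizu:1960} --- this is the decisive use of rigidity you were looking for. Second, let $\operatorname{vol}$ be the pullback by the Albanese map of a translation invariant volume form; it is a global holomorphic section of $K_M$ vanishing exactly on $R$. Then $\operatorname{vol}(\kappa_1,\dots,\kappa_n)$ is a holomorphic function on the compact manifold $M$, hence constant, and it equals a nonzero constant on the dense open set; so it is nowhere zero, forcing $R=\emptyset$ and showing that $\kappa_1,\dots,\kappa_n$ parallelize $TM$ everywhere. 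Being commuting, they exhibit $M$ as a complex torus, and the rest of your argument (algebraic dimension zero plus Lemma~\ref{lemma:symmetries.torus} or Corollary~\ref{corollary:symmetries.torus}) then yields translation invariance as you say.
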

\begin{proof}
Since \(M\) is of algebraic dimension zero, it is known that the Albanese map \(M \to \alb{M}\) is surjective, with connected fibers and the Albanese torus  \(\alb{M}\) contains no closed complex hypersurface (i.e. divisor) \cite{Ueno:1975}  lemmas 13.1, 13.3, 13.6.  Here, \(\alb{M}\)  is of the same dimension as \(M\), so the Albanese map \(M \to \alb{M}\) is a modification (see lemma 13.7 in \cite{Ueno:1975}).

Let \(H\) be the locus in \(M \) on which the Albanese map drops rank.
The image \(\alpha(H)\)  of \(H\) through  the Albanese map \(\alpha\) is a nowhere dense analytic subset of \(\alb{M}\)  of complex codimension at least two.

The Albanese map is a biholomorphism between the  open sets \(M \setminus \alpha^{-1}(\alpha(H))\) in \(M\)  and \(\alb{M} \setminus \alpha(H)\) in \(\alb{M}\). This implies that the holomorphic rigid geometric structure of \(M\) drops down to a  holomorphic geometric structure  \(\phi\)  on \(\alb{M} \setminus \alpha(H)\).  Now we put together \(\phi\) and the translation structure (holomorphic parallelization) of \(\alb{M}\) together to form a holomorphic rigid
geometric structure \(\phi'\) on \(\alb{M} \setminus \alpha(H)\). The complex manifold  \(\alb{M} \setminus \alpha(H)\) being of algebraic dimension zero, the geometric structure \(\phi'\) is locally homogeneous on an open dense set in \(\alb{M} \setminus \alpha(H)\) \cite{Dumitrescu:2010b,Dumitrescu:2011}.  The local infinitesimal symmetries are translations, because they preserve the translation structure.  They extend to global  translations on \(\alb{M}\) preserving  \(\phi'\). Consequently, \(\phi'\) is the restriction to \(\alb{M} \setminus \alpha(H)\) of  a translation invariant geometric structure defined on all of  \(\alb{M}\). 

Consider a family of linearly independent translations on \(\alb{M}\). They pull back to commuting holomorphic vector fields  \(\kappa_1, \dots, \kappa_n\) on \(M \setminus \alpha^{-1}(\alpha(H))\) which preserve the initial geometric structure and parallelize  the holomorphic tangent bundle \(TM\) over \(M \setminus \alpha^{-1}(\alpha(H))\). Since they are symmetries of an analytic rigid geometric structure,  they extend to  all of  \(M\) \cite{Amores:1979,Nomizu:1960}.

Pull back a holomorphic volume form by the Albanese map: a  holomorphic section \(\operatorname{vol}\) of the canonical bundle of \(M\) which  vanishes on the branch locus  \(H\).
Plug \(\kappa_1, \dots, \kappa_n\) into the volume form and get the holomorphic function 
\[
\operatorname{vol}\of{\kappa_1, \dots, \kappa_n}
\]
which is constant and nonzero  on \(M \setminus \alpha^{-1}(\alpha(H))\), since it corresponds to a constant nonzero function on the Albanese torus, so constant and nonzero on all of \(M\).

This implies that the holomorphic vector fields \(\kappa_1, \dots, \kappa_n\)  holomorphically parallelize \(TM\) on all of \(M\). Since they commute, \(M\) is a complex torus and \(\kappa_1, \dots, \kappa_n\)
are translation vector fields. The initial geometric structure on \(M\) is translation invariant.
\end{proof}

\section{
\texorpdfstring{Deformation space of \((X,G)\)-structures}{Deformation space of (X,G)-structures}
}

Consider an \((X,G)\)-structure on a manifold \(M\) and the corresponding holonomy morphism \(h : \pi_1(M) \to G\).  The deformation space of \((X,G)\)-structures on \(M\) is the quotient of the space
of \((X,G)\)-structures on \(M\) by the group  of diffeomorphisms of \(M\) isotopic to the identity.

By the Ehresmann-Thurston principle (see, for instance, \cite{Goldman:2010} p. 7), the deformation space of \((X,G)\)-structures on   \(M \)  is locally homeomorphic, through the holonomy map,  to
an open neighborhood of \(h\) in the space of group homomorphisms from  \( \pi_1(M) \)  into \(G \) (modulo the action of  \(G \) on the target \(G\) by  inner automorphisms). 

In other words, any group homomorphism from \( \pi_1(M) \) into \(G \) close to \(h\)  is itself the holonomy morphism of an \((X,G)\)-structure on \(M \) close to the initial one. Also, two close  \((X,G)\)-structures with the same holonomy morphism are each conjugated to the other by an isotopy of \(M\).

For any finitely generated group \(\pi\) and any algebraic group \(G\), \(\Hom{\pi}{G}\) is an algebraic variety (a subvariety of \(G^k\),  if \(\pi\) can be generated by \(k\) elements). If $G$
is a complex Lie group, the space \(\Hom{\pi}{G}\) is a complex  analytic space. A family of \((X,G)\)-structures on $M$ parametrized by a complex reduced space $S$ is called holomorphic if the family of the corresponding holonomy morphisms lifts as a complex analytic map from $S$ to \(\Hom{\pi}{G}\).

Notice that, in our case, the \(G \)-action preserves a complex structure on \(X \) and hence any \((X,G)\)-structure on a manifold \(M \) induces an underlying complex structure on \(M \) (for which the
\((X,G) \)-structure is holomorphic). In particular, when deforming the \((X,G) \)-structure on \(M \),  one also deforms the complex structure on \(M\).

Let us make precise  how the complex structure varies under the  deformation of a holomorphic  \((X,G)\)-structure  on a  complex torus.

\begin{lemma}\label{lemma:perturb}
Consider  a complex homogeneous space \((X,G)\).
Suppose that we have a holomorphic \((X,G)\)-structure on a complex \(n\)-torus \(T=V/\Lambda\), with holonomy morphism \(h \colon \Lambda \to G\).
If \(h_s \in \Hom{\Lambda}{G}\) is a holomorphic family of group morphisms for \(s\)  in some reduced complex space \(S\), with \(h_{s_0}=h\) for some \(s_0 \in S\),  then there is a holomorphic family of \((X,G)\)-structures on a holomorphic family of complex tori \(T_s\) with holonomy morphism \(h_s\), for \(s\) in an open neighborhood of \(s_0\).
\end{lemma}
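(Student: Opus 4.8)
The plan is to combine the Ehresmann--Thurston principle, recalled above, with a Hodge-theoretic construction of the varying complex structures on the torus. Since $\Hom{\Lambda}{G}$ is a complex analytic space and $h_s$ is a holomorphic family of group morphisms passing through $h=h_{s_0}$, the Ehresmann--Thurston principle immediately produces, for $s$ near $s_0$, a family of $(X,G)$-structures on the fixed \emph{smooth} manifold $T$ with holonomy $h_s$, depending continuously (and, in the appropriate sense, holomorphically) on $s$. The only real content of the lemma is that this family of $(X,G)$-structures organizes into a \emph{holomorphic} family of \emph{complex tori} $T_s$; that is, the underlying complex structure on $T$ induced by the $(X,G)$-structure with holonomy $h_s$ is again the complex structure of a complex torus, and these vary holomorphically with $s$.

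First I would fix a developing map $\delta_s \colon V \to X$ for each structure, normalized by $\delta_s(0)=x_0$, depending holomorphically on $s$ (this normalization is possible by the uniqueness of the developing map up to post-composition by $G$, together with the Ehresmann--Thurston construction, which gives the developing maps as perturbations of $\delta = \delta_{s_0}$ on compact sets). Pulling back the complex structure of $X$ by $\delta_s$ equips the real vector space underlying $V$ with a complex structure $J_s$ that is invariant under the $\Lambda$-action by $J_s$-affine transformations $v \mapsto h_s(\gamma)$-translates; more precisely, the $(X,G)$-structure on $T$ makes the covering space $V$ into an open subset of $X$ via $\delta_s$, hence gives $V$ a complex manifold structure on which $\Lambda$ acts by biholomorphisms. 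The key point is that a complex torus structure on the manifold $T = V/\Lambda$ is the same as a complex structure on $V$ that is \emph{translation invariant}, i.e. a point of the period domain; a priori $\delta_s$ only gives a complex structure invariant under the discrete group $\Lambda$. Here I would invoke Lemma~\ref{lemma:algebraic.description}: the developing map $\delta_{s_0}$ is translation invariant for the structure at $s_0$, and more to the point, for each $s$ the $(X,G)$-structure on the torus-with-its-new-complex-structure is again (by the very statement of the main theorems, or rather, by the fact that we only need the complex structure to be that of a torus, not full translation invariance) — no: the cleanest route is to argue directly that a complex structure on $V$ invariant under a lattice $\Lambda$ acting freely and properly discontinuously, such that $V/\Lambda$ is compact, must be linear. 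This is the statement that a compact complex manifold diffeomorphic to a torus, carrying $n$ commuting holomorphic vector fields (here the vector fields $\partial/\partial v_i$ are holomorphic on $V/\Lambda$ for the new structure, since the structure is locally homogeneous and these translation fields are Killing for the flat model pulled back — one checks they push forward to global holomorphic fields using the local homogeneity), is biholomorphic to a complex torus with a linear complex structure. Thus $J_s$ is conjugate to a constant (linear) complex structure $J_s^{\mathrm{lin}}$, giving the complex torus $T_s \defeq (V, J_s^{\mathrm{lin}})/\Lambda$.

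The main obstacle is the holomorphic dependence on $s$, and the identification $T_s = (V,J^{\mathrm{lin}}_s)/\Lambda$: I must show the family $s \mapsto J_s^{\mathrm{lin}}$ is a holomorphic map into the period domain. For this I would take the $n$ global holomorphic $1$-forms $\omega_1^s, \dots, \omega_n^s$ on $V/\Lambda$ for the structure at $s$ (dual to the translation vector fields, which span the holomorphic tangent bundle of $T_s$ since the structure is locally homogeneous), pulled back to $V$; these are closed translation-invariant $1$-forms, hence linear, and their period matrices over a fixed basis of $\Lambda$ give the point of the period domain defining $T_s$. Because $\delta_s$ depends holomorphically on $s$, so do these forms and their periods; this yields the holomorphic family of tori. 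One then has, tautologically, a holomorphic family of $(X,G)$-structures — developing maps $\delta_s$, holonomy $h_s$ — on the family $T_s$, as required. I expect the bookkeeping of ``holomorphic in $s$'' through the Ehresmann--Thurston homeomorphism, and the argument that the new complex structure on the torus manifold is linear, to be the two places demanding care; everything else is formal.
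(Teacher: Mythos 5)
There is a genuine gap at the one point where the lemma has real content: showing that the compact complex manifold \(T_s\) (the real torus equipped with the complex structure induced by the deformed \((X,G)\)-structure) is actually a complex \emph{torus}. You propose to produce \(n\) commuting holomorphic vector fields on \(T_s\) by pushing forward the translation fields \(\partial/\partial v_i\) of \(V\), asserting that these are holomorphic for the new structure ``using the local homogeneity.'' This is not justified, and it is essentially the statement whose proof occupies the rest of the paper: the translations of \(V\) preserve \(J_s=\delta_s^*J_X\) exactly when the deformed \((X,G)\)-structure is translation invariant, which is not known at this stage. Local homogeneity only provides the locally defined vector fields coming from \(\LieG\) pulled back through \(\delta_s\); these are neither the translations of \(V\) nor globally defined on the quotient, and they need not commute. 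Likewise your fallback claim --- that any \(\Lambda\)-invariant complex structure on \(V\) with compact quotient must be linear --- cannot simply be asserted: the paper explicitly leaves as a conjecture (in its conclusion) that nonstandard complex structures on real tori admit no holomorphic \((X,G)\)-structures, so such structures are not known to be excluded, and your argument would need to rule them out rather than assume them away.

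The paper closes this gap with an external input rather than a direct argument: since \(T_s\) is a small deformation of the complex torus \(T_{s_0}\), it is again a complex torus by Theorem 2.1 of Catanese (small deformations of complex tori are complex tori, going back to Andreotti). With that theorem in hand, the remaining steps of your proposal --- Ehresmann--Thurston to obtain the perturbed structures with holonomy \(h_s\), the induced complex structure coming from the \(G\)-invariant complex structure on \(X\), and the holomorphic dependence on \(s\) --- are fine and agree with the paper's (three-sentence) proof. So the proposal is structured correctly but substitutes an unjustified, essentially circular argument for the one essential citation.
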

\begin{proof}
By the Ehresmann--Thurston principle, there is a unique nearby \((X,G)\)-structure on the same underlying real manifold with holonomy morphism \(h_s\).
Since \(G\) preserves a complex structure on \(X\), this \((X,G)\)-structure is holomorphic for a unique complex structure on \(T_s\).
Being a small deformation of a complex torus, \(T_s\) is a complex torus by Theorem 2.1 in  \cite{Catanese}.
\end{proof}

The following result deals with the deformation space of translation invariant \((X,G)\)-structures on complex tori. Notice that the condition of the symmetry group \(Z_X\) being of dimension \(n\) is closed under deformation of \((X,G)\)-structures,  since a limit of \((X,G)\)-structures could have smaller holonomy group (i.e. some generators of \(\Lambda\) landing in some special position), but that would only decrease the collection of conditions that determine the centralizer \(Z_X^0\), so make \(Z_X^0\) larger. Hence   limits of translation invariant \((X,G)\)-structures  are translation invariant, as the centralizer of the holonomy can only increase in dimension.

\begin{theorem} \label{theorem:translation invariant.deformation}  Let \((X,G)\) be a complex algebraic homogeneous space.
If a holomorphic \((X,G)\)-structure on a complex torus \( T= V/\Lambda\) is translation invariant, then so is any deformation of that structure.  Consequently, translation invariant \((X,G)\)-structures form a union of connected components in the deformation space of \((X,G)\)-structures on the (real) manifold \(T \).
\end{theorem}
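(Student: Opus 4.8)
The plan is to prove that inside the deformation space the translation-invariant \((X,G)\)-structures form a subset that is both closed and open; an open and closed subset is automatically a union of connected components. By the Ehresmann--Thurston principle, near our structure the deformation space is parametrized through the holonomy by a neighborhood \(\mathcal U\) of \(h\) in \(\Hom{\Lambda}{G}\) (conjugation by \(G\) playing no role below), and by Lemma~\ref{lemma:perturb} each \(h'\in\mathcal U\) is the holonomy of a holomorphic \((X,G)\)-structure on a nearby complex torus. By Lemmas~\ref{lemma:centralizer} and~\ref{lemma:algebraic.description} such a structure is translation invariant exactly when \(\dimC{\centralizer{h'\of{\Lambda}}{G}}=n\), and one always has \(\dimC{\centralizer{h'\of{\Lambda}}{G}}\le n\), since the identity component of \(\centralizer{h'\of{\Lambda}}{G}\) acts locally freely on the \(n\)-dimensional image of the developing map.

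Closedness is the remark preceding the theorem, made precise as follows. Since \((X,G)\) is algebraic, \(\Hom{\Lambda}{G}\) is an algebraic variety and \(\curly{\pr{h',g}:g\in\centralizer{h'\of{\Lambda}}{G}}\) is Zariski closed in \(\Hom{\Lambda}{G}\times G\), so the fibre dimension \(h'\mapsto\dimC{\centralizer{h'\of{\Lambda}}{G}}\) of its projection to \(\Hom{\Lambda}{G}\) is upper semicontinuous. Hence \(\curly{h'\in\mathcal U:\dimC{\centralizer{h'\of{\Lambda}}{G}}\ge n}\) is closed, and as this dimension never exceeds \(n\) on \(\mathcal U\), the translation-invariant locus \(\curly{h'\in\mathcal U:\dimC{\centralizer{h'\of{\Lambda}}{G}}=n}\) is closed in \(\mathcal U\).

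For openness I would start from Lemma~\ref{lemma:algebraic.description}(4): translation invariance means the developing map is \(\delta\of v=\bar h\of v\,x_0\) for a morphism of complex Lie groups \(\bar h\colon V\to G\), with \(h\of{\Lambda}\subset\bar h(V)=Z_X^0\), an \(n\)-dimensional connected abelian subgroup which is Zariski closed in \(G\) (being the identity component of the algebraic group \(\centralizer{h\of{\Lambda}}{G}\)); moreover \(\bar h=\exp\circ\varphi\) with \(\varphi\colon V\to\LieZ\subset\LieG\) a \(\C{}\)-linear isomorphism onto the Lie algebra of \(Z_X^0\) and \(\exp\colon\LieZ\to Z_X^0\) a covering homomorphism. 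Given a deformation with holonomy \(h_s\) near \(h\), the idea is to hold \(\bar h\) fixed and absorb the change of holonomy into the complex structure of the torus: once one knows \(h_s\of{\Lambda}\) again lies in \(Z_X^0\), lift each \(h_s\of{\gamma_j}\) through \(\exp\) to the unique \(\xi_j\of s\in\LieZ\) near \(\varphi\of{\ell_j}\), put \(\ell_j\of s\defeq\varphi^{-1}\of{\xi_j\of s}\), and let \(\Lambda_s\) be the lattice they generate (a lattice, being close to \(\Lambda\)). Then \(v\mapsto\bar h\of v\,x_0\) is the developing map of a translation-invariant \((X,G)\)-structure on \(V/\Lambda_s\) with holonomy \(h_s\); by the uniqueness clause of the Ehresmann--Thurston principle it is the deformed structure, which is therefore translation invariant. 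If \(h_s\of{\Lambda}\) escapes \(Z_X^0\), one runs the same construction after replacing \(Z_X^0\) by a nearby \(n\)-dimensional abelian algebraic subgroup.

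The main obstacle is precisely the hypothesis used in the last step: that a small deformation of the holonomy of a translation-invariant structure still takes values in (a small deformation of) an \(n\)-dimensional abelian algebraic subgroup of \(G\) — equivalently, that \(\dimC{\centralizer{h_s\of{\Lambda}}{G}}\ge n\) for \(h_s\) near \(h\). I expect to obtain this from the algebraicity of \((X,G)\) together with the bound \(\dimC{\centralizer{h_s\of{\Lambda}}{G}}\le n\) and the upper semicontinuity above: these should force the generic value of \(h'\mapsto\dimC{\centralizer{h'\of{\Lambda}}{G}}\) to equal \(n\) on every irreducible component of \(\Hom{\Lambda}{G}\) through \(h\), hence to equal \(n\) on the whole component; since \(h_s\of{\Lambda}\) is abelian, \(\dimC{\centralizer{h_s\of{\Lambda}}{G}}=n\) then places \(h_s\of{\Lambda}\) inside an \(n\)-dimensional abelian algebraic subgroup, closing the loop. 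Combined with the closedness already proved, this shows the translation-invariant structures form a union of connected components of the deformation space of \((X,G)\)-structures on the real manifold \(T\).
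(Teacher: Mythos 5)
Your closedness half is exactly the paper's: it is the remark immediately preceding the theorem, made precise just as you do via upper semicontinuity of the fibre dimension of \(\curly{\pr{h',g}: g\in \centralizer{h'\of{\Lambda}}{G}}\to\Hom{\Lambda}{G}\) together with the a priori bound \(\dimC{\centralizer{h'\of{\Lambda}}{G}}\le n\) from Lemma~\ref{lemma:algebraic.description}. The openness half, however, contains a genuine gap, and it sits precisely where you flag ``the main obstacle''. You need the lower bound \(\dimC{\centralizer{h_s\of{\Lambda}}{G}}\ge n\) for \(h_s\) near \(h\), and you propose to get it from upper semicontinuity plus the bound \(\le n\) by arguing that the generic value of \(h'\mapsto\dimC{\centralizer{h'\of{\Lambda}}{G}}\) on each irreducible component of \(\Hom{\Lambda}{G}\) through \(h\) must equal \(n\). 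That does not follow: for an upper semicontinuous fibre dimension the generic value is the \emph{minimum} on the component, and your only constraints are that this minimum is \(\le n\) and that the value at the single point \(h\) equals \(n\). Nothing rules out that \(h\) lies in the jump locus, with the generic centralizer dimension strictly below \(n\) --- upper semicontinuity is perfectly consistent with that, and it is the resource you already spent on closedness. So the loop is not closed, and the rest of your openness argument (lifting \(h_s\of{\gamma_j}\) through \(\exp\), building the lattice \(\Lambda_s\), and invoking Ehresmann--Thurston uniqueness), while sound in itself, is conditional on the unproved step.

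The paper closes this gap by an entirely different device, with genuinely transcendental input. Since the holonomy of the translation-invariant structure extends to a morphism \(h\colon V\to G\), restricting \(h\) to other lattices \(\Lambda'\subset V\) moves the structure \emph{inside} the translation-invariant locus onto a torus of algebraic dimension zero; Corollary~\ref{corollary:symmetries.torus} --- which rests on the local homogeneity results of \cite{Dumitrescu:2010b,Dumitrescu:2011} for rigid geometric structures on manifolds of algebraic dimension zero, not on any centralizer count --- then forces \emph{every} \((X,G)\)-structure on such a torus, hence every nearby structure in the component, to be translation invariant. This makes the translation-invariant locus open and dense in the component, and combined with the closedness you proved, equal to the whole component. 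Some input of this kind is needed where your proposal currently relies on a semicontinuity statement running in the wrong direction.
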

\begin{proof}
Start with a translation invariant \((X,G)\)-structure on \(T \).
The holonomy morphism extends from \( \Lambda \)  to a complex Lie group morphism \(h \colon V \to G\).
For any other complex torus \(T'=V/\Lambda' \), restrict \(h\) to the period lattice \(\Lambda'\) of that torus and take the same developing map to construct a (translation invariant)  \((X,G)\)-structure on \(T'\).
In particular, we can deform the starting  translation invariant    \((X,G)\)-structure to another (translation invariant) \((X,G)\)-structure on a complex torus of algebraic dimension zero (by choosing a generic
lattice \( \Lambda' \)).  Moreover, by corollary  \vref{corollary:symmetries.torus},    all nearby \((X,G)\)-structures are translation invariant, since  the underlying  complex  structure of the  torus \(T \) remains of  algebraic dimension zero under small perturbation of the complex structure \(T'\).   Hence, the  translation invariant  \((X, G) \)-structures on \(T\)  form  an  open dense set in  our connected component of  the deformation space of \((X,G)\)-structures.  In particular, we proved that the natural map associating to an \((X,G)\)-structure the underlying complex structure on \(T\) is surjective on the Kuranishi space of \(V/\Lambda\).

All of these deformations of  the   \((X,G)\)-structure merely perturb the holonomy morphism  \(h\) through a family of complex Lie group morphisms \(h \colon V \to  G\) and the developing map is
 \(\delta(v)=h(v)x_0\), with \(x_0 \in X\).
 
But, we have seen that  the translation invariant \((X,G)\)-structures always  form a closed set. 
Therefore in that connected component of the deformation space of  \((X,G)\)-structures, all \((X,G)\)-structures are translation invariant.
\end{proof}

Let us give an easy argument implying, for various complex homogeneous surfaces \((X,G)\), that all \((X,G)\)-structures on complex tori of complex dimension two are translation invariant.

\begin{lemma}\label{lemma:Z.nought}
Suppose that \((X,G)\) is a complex algebraic homogeneous space.
If there is a holomorphic \((X,G)\)-structure on a complex torus \(T\), and that structure is not translation invariant, then there is another such structure on another complex torus nearby to a finite covering of \(T\), also not translation invariant, with holonomy morphism having dense  image  in \(Z_X^0\).
Any connected abelian subgroup near enough to \(Z_X^0\) is the identity component of its centralizer and arises as the Zariski closure of the image of the holonomy of a nearby \((X,G)\)-structure on a nearby complex torus.
\end{lemma}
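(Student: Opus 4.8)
The plan is to combine the deformation principle from Lemma~\ref{lemma:perturb} with the fact that $Z_X^0$ is, up to finite index, the identity component of the centralizer $\centralizer{h(\Lambda)}{G}$ (Lemma~\ref{lemma:algebraic.description} and Lemma~\ref{lemma:centralizer}), together with an explicit perturbation of the holonomy morphism inside the abelian group $Z_X^0$. First I would pass to a finite covering of $T$ so that $h(\Lambda)$ actually lies in $\centralizer{h(\Lambda)}{G}$ and, by Lemma~\ref{lemma:algebraic.description}, the finite-index ambiguity disappears: we may assume $h(\Lambda) \subset Z_X^0$ and $Z_X^0 = \pr{\centralizer{h(\Lambda)}{G}}^0$, an abelian connected complex algebraic group (it is Zariski closed by Lemma~\ref{lemma:algebraic.description}, and abelian). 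Since the structure is not translation invariant, $\dimC Z_X^0 < \dimC T = n$, so $h(\Lambda) \subset Z_X^0$ sits inside a proper abelian subgroup of $G$, and $\Lambda \cong \Z{2n}$ maps into $Z_X^0$.

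Next I would produce a nearby holonomy morphism with dense image in $Z_X^0$. Write $A \defeq Z_X^0$, an abelian complex algebraic group, so $A \cong \pr{\C{*}}^a \times \C{b} \times (\text{finite})$ up to isogeny; replacing $T$ by a further finite covering and $A$ by its identity component we may take $A = \pr{\C{*}}^a \times \C{b}$. The original holonomy is a homomorphism $h \colon \Lambda \to A$. Inside $\Hom{\Lambda}{A} = A^{2n}$, which is connected, the homomorphisms whose image is dense form a dense subset (for the $\C{b}$ factor this is automatic once the image is not contained in a proper sub-vector-space, which a generic small perturbation achieves; for the $\pr{\C{*}}^a$ factor, density is the statement that the images of the $2n$ generators generate a Zariski-dense, hence for the classical topology dense-in-$\pr{\C{*}}^a$, subgroup, again generic). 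Choose $h_s$ a holomorphic one-parameter family in $\Hom{\Lambda}{A} \subset \Hom{\Lambda}{G}$ with $h_0 = h$ and $h_s$ having dense image in $A$ for $s \neq 0$ small. By Lemma~\ref{lemma:perturb} this family is realized by a holomorphic family of $(X,G)$-structures on a holomorphic family of complex tori $T_s$, and since $h_s$ still has image in the abelian group $A$ which acts locally freely on $\delta(V)$ (Lemma~\ref{lemma:algebraic.description}), one checks $Z_{X,s}^0 \supseteq A$; by the semicontinuity remark preceding Theorem~\ref{theorem:translation invariant.deformation}, $\dimC Z_{X,s}^0 \geq \dimC A = \dimC Z_X^0 < n$, so in particular $\dimC Z_{X,s}^0 < n$ for $s$ small, i.e.\ $T_s$ is again not translation invariant. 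For $s \neq 0$ small, $h_s$ has dense image in $Z_{X,s}^0 = A$.

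For the last sentence: let $B$ be any connected abelian subgroup of $G$ near $Z_X^0 = A$ (say $B$ obtained by conjugating $A$ by a group element near $1$, or by a small deformation of the embedding $A \hookrightarrow G$ within the appropriate Chevalley-type parameter space). Since $\centralizer{A}{G}^0 = A$ and the condition "the centralizer of a connected subgroup has the same dimension" is an open algebraic condition on the subgroup, $\centralizer{B}{G}^0 = B$ for $B$ near $A$. Then perturb $h$ to a morphism $h' \colon \Lambda \to B$ with Zariski-dense image in $B$ (possible as in the previous paragraph, $B$ being abelian algebraic); apply Lemma~\ref{lemma:perturb} to realize $h'$ as the holonomy of a nearby $(X,G)$-structure on a nearby complex torus $T'$, and note that the Zariski closure of $h'(\Lambda)$ is $B$, while $Z_{X'}^0$ contains $B$ and is contained in $\centralizer{h'(\Lambda)}{G}^0 = \centralizer{B}{G}^0 = B$ up to finite index, hence equals $B$. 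The main obstacle I anticipate is the bookkeeping of finite coverings and isogenies: one must check that the finitely many coverings and isogenies needed to reduce $A$ to a product $\pr{\C{*}}^a \times \C{b}$ and to make $h(\Lambda)$ land in $Z_X^0$ can be performed simultaneously and do not interfere with the genericity arguments for density of the image; once the group-theoretic setup is clean, density of the image is standard and the deformation statements are all immediate consequences of Lemma~\ref{lemma:perturb} and the semicontinuity of $\dimC Z_X^0$.
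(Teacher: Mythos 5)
Your proposal is correct and follows essentially the same route as the paper: pass to a finite cover so that \(h(\Lambda)\subset Z_X^0\), use that \(Z_X^0\) is abelian and \(\Lambda\) is free abelian of rank \(2n>2\dimC{Z_X^0}\) to perturb the holonomy to a morphism with Zariski-dense image in \(Z_X^0\), realize the perturbation via Lemma~\ref{lemma:perturb}, and preserve non-translation-invariance by the semicontinuity of \(\dimC{Z_X^0}\) (the paper instead just cites Theorem~\ref{theorem:translation invariant.deformation}, which packages the same fact). Two small slips worth fixing: your inequality should be \(\dimC{Z_{X,s}^0}\le\dimC{Z_{X,0}^0}=\dimC{A}\) by upper semicontinuity, which combined with \(Z_{X,s}^0\supseteq A\) gives equality and hence \(<n\) --- as written, ``\(\ge\dots<n\)'' proves nothing; and Zariski density does not imply density for the classical topology, though only Zariski density is needed here.
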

\begin{proof} Since \(Z_X\) is the centralizer of \(h(\Lambda)\) in \(G\), it is an algebraic subgroup in \(G\). Therefore it consists of a finite number of connected components.
After perhaps replacing \(T\) by a finite cover  of \(T\),  we can assume  that \(h(\Lambda) \subset Z_X^0\).
By lemma~\vref{lemma:algebraic.description}, \(Z_X^0\) is abelian. 
Since \(\Lambda\) is free abelian, morphisms \(\Lambda \to Z_X^0\) are precisely arbitrary choices of where to send some generating set of \(\Lambda\).
Since \(\Lambda\) has rank \(2n\) and \(\dimC{Z_X^0} < n\), we can slightly deform the holonomy morphism to have Zariski dense image in \(Z_X^0\).
If we can perturb \(Z_X^0\) slightly to an abelian subgroup with larger centralizer, we can repeat the process. Since we stay in the same connected  component in the deformation space of \((X,G) \)-structures, theorem \vref{theorem:translation invariant.deformation}
implies that none of these \((X,G)\)-structures are translation invariant.
\end{proof}

\begin{example}
Suppose that \(\dimC{X}=2\) and the  Levi decomposition of \(G\) has reductive part with rank 2 or more.
Suppose we have a holomorphic \((X,G)\)-structure on a complex 2-torus.
The generic connected 1-dimensional subgroup of \(G\) is not algebraic, because the characters on a generic element of \(\LieG\) have eigenvalues with irrational ratio.
After perhaps a small perturbation of the \((X,G)\)-structure, \(Z_X^0\) has complex dimension 2 or more: the \((X,G) \)-structure becomes translation invariant. 
Every holomorphic \((X,G)\)-structure of this kind on a complex 2-torus must be  translation invariant (because of  lemma \ref{lemma:Z.nought}).
From the classification of the complex homogeneous surfaces \((X,G)\) \cite{McKay:2014}, this occurs for the complex homogeneous surfaces \(A1\), \(A2\), \(B\beta{2}\), \(B\gamma{4}\), \(B\delta{2}\), \(B\delta{4}\), \(C2\), \(C3\), \(C5\), \(C6\), \(C7\) and \(D1\).
\end{example}

\section{Reductive and parabolic Cartan geometries} \label{section:cartan geometries}

Pick a complex homogeneous space \((X,G)\),  with \(H \subset G\) the stabilizer of a point \(x_0 \in X\), and with the groups \(H \subset G\) having Lie algebras \(\LieH \subset \LieG\).
The space \((X,G)\) is \emph{reductive} if \(H\) is a reductive linear algebraic group, \emph{rational} if \(X\) is compact and birational to projective space and \(G\) is semisimple in adjoint form.

Recall that a \emph{Cartan geometry} (or a \emph{Cartan connection}) is a geometric structure infinitesimally modelled on a homogeneous space. The curvature of a Cartan geometry vanishes if and only if the Cartan geometry is an \((X,G)\)-structure. 
A \emph{holomorphic Cartan geometry} modelled on \((X,G)\) is a holomorphic \(H\)-bundle \(B \to M\) with a holomorphic connection \(\omega\) on \(B \times^H G\) so that the tangent spaces of \(B\) are transverse to the horizontal spaces of \(\omega\).
A Cartan geometry or locally homogeneous geometric structure is \emph{reductive (parabolic)} if its model is reductive (rational).

If a compact \Kaehler manifold has trivial canonical bundle and a holomorphic parabolic geometry then the manifold has a finite unbranched holomorphic covering by a complex torus and the geometry pulls back to be translation invariant \cite{McKay:2011} p. 3 theorem 1 and p. 9 corollary 2.

\begin{example}
Among complex homogeneous surfaces \((X,G)\) \cite{McKay:2014}, the rational homogeneous varieties are \(A1=\pr{\Proj{2},\PSL{3,\C{}}}\) and
\[
C7=\pr{\Proj{1} \times \Proj{1}, \PSL{2,\C{}} \times \PSL{2,\C{}}}.
\]
Therefore any holomorphic locally homogeneous structure on a complex torus modelled on either of these surfaces is translation invariant.
\end{example}

\begin{theorem}\label{theorem:reductive}
If a compact \Kaehler manifold has a holomorphic reductive Cartan geometry, then the manifold has a finite unbranched holomorphic covering by a complex torus and the geometry pulls back to be translation invariant.
\end{theorem}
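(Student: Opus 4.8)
The plan is to reduce the reductive case to the parabolic case already handled by the cited theorem (\cite{McKay:2011} p.~3 Theorem~1 and p.~9 Corollary~2), by enlarging the model. So let $(X,G)$ be a reductive complex homogeneous space: $H \subset G$ is reductive linear algebraic. First I would invoke the general structure theory of reductive homogeneous spaces: since $H$ is reductive, one can complete $(X,G)$ to a parabolic (rational) model. Concretely, embed $G$ into a complex semisimple group $\bar G$ (or take the adjoint form of the semisimple part together with the action on a suitable flag-type variety) so that $H$ sits inside a parabolic subgroup $P \subset \bar G$ with $P/H$ affine — the point being that a reductive subgroup of a parabolic has affine quotient, and the fibration $X = G/H \to \bar G/P$ exhibits $X$ as an affine bundle over a rational homogeneous variety $\bar X = \bar G/P$. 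This is the standard device: a holomorphic reductive Cartan geometry on $M$ induces a holomorphic parabolic Cartan geometry on $M$ by extending the structure group from $H$ to $P$, i.e. forming $B \times^H P$ with the induced Cartan connection.

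Next, apply the parabolic result directly: if $M$ is a compact Kähler manifold carrying the induced holomorphic parabolic Cartan geometry, I first need to know the canonical bundle $K_M$ is trivial. For this I would argue that a holomorphic reductive Cartan geometry forces $c_1(M)=0$: the Cartan connection trivializes (a power of) the canonical bundle, because on a reductive model the top exterior power of $\mathfrak g/\mathfrak h$ carries a trivial $H$-action up to a character, and for $H$ reductive that character is trivial on the identity component, so $K_M$ (or a finite power) is holomorphically trivial, hence $c_1(M)_{\mathbb R}=0$. By Yau's theorem a compact Kähler manifold with $c_1=0$ (after a finite étale cover to kill torsion in $K_M$) has trivial canonical bundle in the relevant sense, so the cited parabolic theorem applies: $M$ has a finite unbranched holomorphic cover $T \to M$ by a complex torus, and the induced parabolic geometry on $T$ is translation invariant.

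Finally I would descend back from the parabolic geometry to the original reductive one. On the torus $T$, the induced parabolic Cartan geometry being translation invariant means its Cartan connection, pulled back to the universal cover $V = \mathbb C^n$, is a left-invariant $\bar{\mathfrak g}$-valued form; the original reductive Cartan geometry is recovered as an $H$-reduction of the associated $P$-bundle, and translation invariance of the ambient data together with the rigidity already exploited in Lemma~\ref{lemma:normalizing.sequence} and Lemma~\ref{lemma:X.is.G} forces the reduction itself to be translation invariant — equivalently, the developing map of the flat reductive geometry is $v \mapsto h(v)x_0$ for a Lie group morphism $h \colon V \to G$, which is point~(3) of Lemma~\ref{lemma:algebraic.description}. (If the reductive Cartan geometry is not flat, one argues instead that the curvature is a holomorphic section of a bundle trivialized by translation-invariant framing, hence translation invariant, which suffices for the statement.) The main obstacle I anticipate is the passage from $H$ reductive to an honest parabolic completion $(X,G)\hookrightarrow(\bar X,\bar G)$ with affine fibers: one must check that such a completion always exists for an arbitrary reductive linear algebraic $H$ — this uses that $H$ is a reductive subgroup of some parabolic in a semisimple group (Matsushima-type criterion), and care is needed when $G$ is not itself semisimple (a possible central torus or unipotent radical must be carried along without destroying the Kähler triviality argument). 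Once that embedding is in hand, everything else is a routine invocation of the already-cited parabolic theorem plus the torus rigidity lemmas of Section~\ref{section:symmetries}.
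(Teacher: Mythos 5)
Your reduction to the parabolic case does not go through as written, and it bypasses the two places where the real content of the theorem lies. First, the completion of a reductive model to a parabolic one is not available in the generality needed: the paper's definition of ``reductive'' constrains only the stabilizer \(H\), not \(G\), so there is no reason \(G\) embeds in a semisimple group with \(H\) landing inside a parabolic subgroup --- you flag this yourself as the main obstacle, and no argument is supplied. Second, your route to triviality of the canonical bundle rests on the claim that the character by which a reductive \(H\) acts on \(\Lambda^{\mathrm{top}}(\mathfrak{g}/\mathfrak{h})\) is trivial on the identity component; this is false: \(H=\mathbb{C}^*\) is reductive and has nontrivial characters (scaling on \(W=\mathbb{C}^2\) has determinant \(t^2\)). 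Third, even granting a translation-invariant parabolic geometry on a covering torus, the original geometry is the additional datum of an \(H\)-reduction of the \(P\)-bundle, and nothing in your sketch forces that reduction to be translation invariant; Lemmas~\ref{lemma:normalizing.sequence} and~\ref{lemma:X.is.G} concern flat \((X,G)\)-structures, their developing maps and holonomy, whereas the theorem allows curved Cartan geometries, so they cannot be invoked here.

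The paper's proof sidesteps all of this by using reductivity directly. Splitting \(\mathfrak{g}=W\oplus\mathfrak{h}\) as \(H\)-modules, the Cartan connection decomposes into a coframe plus a connection form, exhibiting the geometry as an \(H\)-reduction of the frame bundle equipped with a holomorphic affine connection; the theorem of Inoue--Kobayashi--Ochiai then yields the finite unbranched covering by a complex torus, with no appeal to \(c_1=0\), Yau, or any parabolic completion. On the torus, comparing the tautological trivialization of the tangent bundle with the coframe produces a holomorphic map \(T\to\operatorname{GL}(W)/H\), which is constant because \(\operatorname{GL}(W)/H\) is \emph{affine} precisely when \(H\) is reductive (Matsushima's criterion); this constancy trivializes the \(H\)-bundle, after which the holomorphic connection, and hence the Cartan geometry, is translation invariant. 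That affineness argument is the missing substitute for your descent step, and it is the one point where the reductivity hypothesis is genuinely used.
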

\begin{proof}
Holomorphically split \(\LieG=W \oplus \LieH\) for some \(H\)-module \(W\); this \(H\)-module is effective \cite{McKay2013} p. 9 lemma 6.1.
At each point of \(B\), the Cartan connection splits into a 1-form valued in \(W\) and a connection 1-form, say \(\omega=\sigma + \gamma\).
At each point of the total space  \(B\) of the Cartan geometry, the 1-form \(\sigma\) is semibasic, so defines a 1-form \(\bar\sigma\) on the corresponding point of the base manifold, a coframe.
Because \(H\) acts effectively on \(W\), the map \(\bar\sigma\) identifies the total space of the Cartan geometry with a subbundle of the frame bundle of the base manifold \cite{McKay2013} corollary 6.2. 
Hence the Cartan geometry is precisely an \(H\)-reduction of the frame bundle with a holomorphic connection.
The tangent bundle admits a holomorphic connection, so has trivial characteristic classes \cite{Inoue/Kobayashi/Ochiai:1980}.
Therefore the manifold admits a finite holomorphic covering by a complex torus \cite{Inoue/Kobayashi/Ochiai:1980};  without loss of generality assume that the manifold is a complex torus  $T$.
The trivialization of the tangent bundle pulls back to the \(H\)-bundle to be a multiple \(g \sigma\) for some \(g \in \GL{W}\), transforming under \(H\)-action, so defining a holomorphic map \(T \to \GL{W}\!/H\).
But \(\GL{W}\!/H\) is an affine algebraic variety, so admits no nonconstant holomorphic maps from complex tori, so \(T \to \GL{W}\!/H\) is constant, hence without loss of generality is the identity, i.e. \(g\) is valued in \(H\), so there is a holomorphic global section of the bundle on which \(g=1\), trivializing the bundle.
The connection is therefore translation invariant, and so the Cartan connection is translation invariant.
\end{proof}

\begin{example}
Among complex homogeneous surfaces \((X,G)\) \cite{McKay:2014}, the reductive homogeneous surfaces are 
\(A2\), 
\(A3\),
\(C2\),
\(C2'\),
\(C3\),
\(C9\),
\(C9'\),
\(D1\),
\(D1_1\),
\(D1_2\),
\(D1_3\),
\(D1_4\) and
\(D3\).
Therefore any holomorphic locally homogeneous structure on a complex torus modelled on any one of these surfaces is translation invariant.
\end{example}

\begin{proposition}
Suppose that \((X,G)\) is a product of a reductive homogeneous space with  a  rational homogeneous variety.
Then every holomorphic Cartan geometry modelled on \((X,G)\) on any complex torus is translation invariant.
\end{proposition}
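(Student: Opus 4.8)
The plan is to write the model as a product $(X,G) = (X_1,G_1) \times (X_2,G_2)$, with $(X_1,G_1)$ reductive and $(X_2,G_2)$ rational, to split a holomorphic Cartan geometry on the torus accordingly into a reductive part and a parabolic part, and then to quote Theorem~\ref{theorem:reductive} for the first and the parabolic theorem of \cite{McKay:2011} stated above for the second. Write $\mathfrak{g} = \mathfrak{g}_1 \oplus \mathfrak{g}_2$, $H = H_1 \times H_2$ and $\mathfrak{h} = \mathfrak{h}_1 \oplus \mathfrak{h}_2$, and let $(B \to T, \omega)$ be a holomorphic Cartan geometry modelled on $(X,G)$ on a complex torus $T$. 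Then $\omega = \omega_1 + \omega_2$ with $\omega_i$ valued in $\mathfrak{g}_i$, and since $[\mathfrak{g}_1,\mathfrak{g}_2] = 0$ the curvature has no cross terms: $\Omega = \Omega_1 + \Omega_2$ with $\Omega_i = d\omega_i + \tfrac{1}{2}[\omega_i,\omega_i]$. Each $\omega_i$ is basic for the action of the complementary factor $H_j$ ($j\ne i$), so it descends to an $H_i$-equivariant $\mathfrak{g}_i$-valued $1$-form $\bar\omega_i$ on the principal $H_i$-bundle $B_i \defeq B/H_j \to T$; the kernel of $\bar\omega_i$ is transverse to the vertical bundle and defines a holomorphic distribution $\mathcal{K}_i$ on $T$ of rank $\dimC{X_j}$, the distributions $\mathcal{K}_1$ and $\mathcal{K}_2$ are complementary in the tangent bundle of $T$, and $\bar\omega_i$ is a Cartan connection of type $(X_i,G_i)$ transverse to $\mathcal{K}_i$.

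The key step I would carry out is to show that $\mathcal{K}_1$ and $\mathcal{K}_2$ are integrable, with leaf spaces quotient tori $T/\mathcal{K}_1$ and $T/\mathcal{K}_2$, so that $\bar\omega_i$ descends to a genuine holomorphic Cartan geometry modelled on $(X_i,G_i)$ on $T/\mathcal{K}_i$. For this I would first reduce to the case where $T$ has algebraic dimension zero: a holomorphic Cartan geometry modelled on an algebraic homogeneous space is a holomorphic rigid geometric structure, and $(X,G)$ is algebraic, so the pair of this geometry with the holomorphic parallelization of $T$ is a holomorphic rigid geometric structure; when $T$ has algebraic dimension zero it is locally homogeneous on a dense open set by \cite{Dumitrescu:2010b,Dumitrescu:2011}, its local symmetries preserve the parallelization hence are translations, and they are transitive and extend globally, so (as in Lemma~\ref{lemma:symmetries.torus} and Corollary~\ref{corollary:symmetries.torus}) the Cartan geometry is already translation invariant. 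For general $T$ I would deform, as in the proof of Theorem~\ref{theorem:translation invariant.deformation}, to a nearby torus of algebraic dimension zero; on such a torus the holomorphic subbundles $\mathcal{K}_1,\mathcal{K}_2$ of the (trivial) tangent bundle of $T$ are automatically constant, hence subtori, so they are integrable with torus leaf spaces as required.

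It would then remain to combine the two pieces: the Cartan geometry on $T/\mathcal{K}_1$ is a reductive Cartan geometry modelled on $(X_1,G_1)$ whose base is already a complex torus, so by Theorem~\ref{theorem:reductive} it is translation invariant; the Cartan geometry on $T/\mathcal{K}_2$ is a parabolic geometry modelled on the rational variety $(X_2,G_2)$ on the complex torus $T/\mathcal{K}_2$, which has trivial canonical bundle, so by the theorem of \cite{McKay:2011} stated above it is translation invariant too. Pulling both back along $T \to T/\mathcal{K}_i$ and recombining through $\omega = \omega_1 + \omega_2$, the connection $\omega$ on $B$ is invariant under every translation of $T$, so the Cartan geometry is translation invariant. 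The hard part will be the middle step: a priori $\mathcal{K}_1$ and $\mathcal{K}_2$ are neither integrable nor invariant and $\bar\omega_1,\bar\omega_2$ are only partial Cartan connections, and the leverage — that on a torus of algebraic dimension zero a holomorphic subbundle of the trivial tangent bundle is linear, so integrability and Hausdorffness of the leaf spaces come for free — rests on checking that the deformation argument of Theorem~\ref{theorem:translation invariant.deformation} genuinely carries a holomorphic Cartan geometry, through its rigidity rather than a finite-dimensional holonomy datum, to the nearby torus.
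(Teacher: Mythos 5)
Your opening move---splitting $\omega=\omega_1+\omega_2$ along $\mathfrak{g}=\mathfrak{g}_1\oplus\mathfrak{g}_2$ and obtaining the induced splitting of the tangent bundle of $T$ into two factors---is sound and agrees with the start of the paper's argument. The gap is exactly at the step you flag as hard: integrating $\mathcal{K}_1,\mathcal{K}_2$ into subtori by deforming $T$ to a torus of algebraic dimension zero. This does not go through, for two independent reasons. First, the deformation machinery of Theorem~\ref{theorem:translation invariant.deformation} rests entirely on the Ehresmann--Thurston principle, which parametrizes \emph{flat} structures by their holonomy morphisms $\Lambda\to G$; a curved holomorphic Cartan geometry carries no such finite-dimensional datum, and neither the paper nor your argument produces, from a geometry on $T$, a geometry on a nearby torus $T'$. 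Second, even in the flat case the deformation runs the wrong way: the proof of Theorem~\ref{theorem:translation invariant.deformation} starts from a \emph{translation invariant} structure, which by construction lives on every lattice in $V$ and hence can be transported to a torus of algebraic dimension zero; an arbitrary structure on $T$ cannot a priori be deformed so that the underlying complex torus acquires algebraic dimension zero, since the map from its component of the deformation space to the Kuranishi space of $T$ is only shown to be surjective on the translation-invariant component. A useful sanity check: if your reduction to algebraic dimension zero worked for an arbitrary Cartan geometry on an arbitrary torus, it would prove translation invariance for \emph{every} algebraic model $(X,G)$ in every dimension---the paper's main conjecture---so the hypothesis that $(X,G)$ is a product of a reductive and a rational space would be superfluous. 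That is a strong signal the step cannot be had this cheaply.

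The paper's proof avoids quotient tori and foliations altogether. From the splitting of $TT$ and the triviality of the canonical bundle of $T$, the determinant line bundles of the two factors of $TT$ are mutually dual; the reductive part of the geometry equips the first factor with a holomorphic connection, forcing its determinant line bundle to be trivial, hence the determinant of the parabolic factor is trivial as well. A trivializing holomorphic section of that determinant reduces the structure group of the parabolic part to a reductive group (\cite{McKay:2011}, p.~3), so the entire Cartan geometry becomes a reductive Cartan geometry on $T$ itself, and Theorem~\ref{theorem:reductive} applies directly. If you wish to salvage your two-quotient strategy, you would have to establish integrability of $\mathcal{K}_1,\mathcal{K}_2$ and compactness of their leaves directly on the given $T$, without any recourse to deformation; that is not easier than the determinant-bundle argument.
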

\begin{proof}
Write \((X,G)=\pr{X_1 \times X_2, G_1 \times G_2}\) as a product of a reductive homogeneous space \(\pr{X_1,G_1}\) and a rational homogeneous variety \(\pr{X_2,G_2}\).
The splitting of \(X\) into a product splits the tangent bundle of the torus into a product and the canonical bundle into a tensor product.
Since the canonical bundle of the complex torus is trivial, the determinant line bundles of the two factors in our splitting are dual.
The reductive geometry gives a holomorphic connection on the first factor of the splitting of the tangent bundle, so that the determinant line bundle of that factor is trivial.
Therefore the determinant line bundle of the second factor is trivial.
Taking a holomorphic section reduces the structure group of the parabolic part of the geometry to a reductive group \cite{McKay:2011} p. 3, and so the geometry is now reductive so the result follows from theorem~\vref{theorem:reductive}.
\end{proof}

\begin{example}\label{example:products}
Among complex homogeneous surfaces \((X,G)\) \cite{McKay:2014}, those which are a product of a reductive homogeneous curve and a rational homogeneous curve are \(C5\), \(C5'\) and \(C6\).
Therefore any holomorphic locally homogeneous structure on a complex torus modelled on any one of these surfaces is translation invariant.
\end{example}

\section{Lifting}\label{section:lifting}

If a complex homogeneous space \((X,G)\) has underlying manifold \(X\) not simply connected, take the universal covering space \(\tilde{X} \to X\), lift the vector fields that generate the Lie algebra \(\LieG\) of \(G\) to \(\tilde{X}\) and generate an action of a covering group, call it \(\tilde{G}\), acting on \(\tilde{X}\).
Caution: this process neither preserves nor reflects algebraicity.
The developing map \(\delta\) of any \((X,G)\)-structure lifts  to a local biholomorphism \(\tilde{\delta}\) to \(\tilde{X}\).
If all of the deck transformations of \(\fundamentalGroup{X}\) arise as elements of \(\tilde{G}\), then the \((X,G)\)-structure is induced by a unique \((\tilde{X},\tilde{G})\)-structure.
An \((X,G)\)-structure on a complex torus is then translation invariant just when the associated \(\pr{\tilde{X},\tilde{G}}\)-structure is translation invariant.

\begin{example}\label{example:lifts}
From the classification of complex homogeneous surfaces \cite{McKay:2014}, \((X,G)\) has universal covering space \((\tilde{X},\tilde{G})\) with deck transformations carried out by elements of \(\tilde{G}\) for 
any \((X,G)\) among
\(B\beta{1}A1\),
\(B\beta{1}D\),
\(B\beta{1}E\),
\(B\beta{2}'\),
\(B\gamma{2}'\),
\(B\delta{2}'\),
\(C2'\),
\(C5'\), 
\(D1_1\), 
\(D1_2\), 
\(D1_3\), 
\(D1_4\) and 
\(D1_5\).
Therefore the proof of translation invariance of holomorphic \((X,G)\)-structures on complex 2-tori, for these \((X,G)\), reduces to the proof of translation invariance of holomorphic \((\tilde{X},\tilde{G})\)-structures on complex tori, for their universal covering spaces.
\end{example}

A slight modification of this procedure, using proposition~\vref{proposition:enlarge}:
\begin{lemma}
Suppose that we have a morphism \(\pr{\tilde{X},\tilde{G}} \to \pr{X',G'}\) of complex homogeneous spaces from the universal covering space \(\pr{\tilde{X},\tilde{G}} \to \pr{X,G}\) of a complex homogeneous space \((X,G)\).
Suppose that \(\tilde{X} \to X'\) is a local biholomorphism and that \(\tilde{G} \to G'\) has closed image \(\bar{G} \subset G'\).
Suppose that there is no positive dimensional compact complex torus in \(G'/\bar{G}\) acted on transitively by a subgroup of \(G'\).
Suppose that all of the deck transformations of \(\fundamentalGroup{X}\) arise as elements of \(G'\).
Then the developing map of any \((X,G)\)-structure lifts uniquely to the developing map of a unique \(\pr{X',G'}\)-structure.
An \((X,G)\)-structure on a complex torus is then translation invariant just when the associated \(\pr{X',G'}\)-structure is translation invariant.
\end{lemma}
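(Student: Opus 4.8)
The plan is to follow the same strategy used in the proof of Proposition~\ref{proposition:enlarge} and in the lifting procedure of Section~\ref{section:lifting}, combining them. First I would start with an \((X,G)\)-structure on a complex torus \(T=V/\Lambda\), with developing map \(\delta \colon V \to X\) and holonomy morphism \(h \colon \Lambda \to G\). Lift \(\delta\) through the covering \(\tilde{X} \to X\) to a local biholomorphism \(\tilde{\delta} \colon V \to \tilde{X}\); this exists and is unique once a base point is fixed since \(V\) is simply connected. Composing \(\tilde{\delta}\) with \(\tilde{X} \to X'\) gives a local biholomorphism \(\delta' \colon V \to X'\). The point of the hypothesis that all deck transformations of \(\fundamentalGroup{X}\) arise as elements of \(G'\) is exactly that the holonomy morphism \(h \colon \Lambda \to G\), after composing with \(\tilde{G} \to G'\) (using that \(\tilde{G} \to G\) is surjective to lift \(h\) to \(\tilde{G}\), then pushing forward), descends to a well-defined morphism \(h' \colon \Lambda \to G'\) making \(\delta'\) equivariant: the ambiguity in lifting \(h\) to \(\tilde{G}\) is absorbed by \(\fundamentalGroup{X}\), which now lives inside \(G'\). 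Thus every \((X,G)\)-structure on \(T\) induces an \(\pr{X',G'}\)-structure, and by the uniqueness of lifts of developing maps through a covering onto the image, this induced structure is unique.

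Next I would prove the equivalence of translation invariance. One direction is immediate: if the \((X,G)\)-structure is translation invariant, then by Lemma~\ref{lemma:algebraic.description}(3) the morphism \(h\) extends to \(h \colon V \to G\) with \(\delta(v)=h(v)x_0\); pushing forward through \(\tilde{G} \to G'\) extends \(h'\) to \(V \to G'\) and \(\delta'(v)=h'(v)x_0'\), so the \(\pr{X',G'}\)-structure is translation invariant. For the converse, this is precisely where Proposition~\ref{proposition:enlarge} applies: the induced \(\pr{X',G'}\)-structure has holonomy contained in \(\bar{G}\) (the closed image of \(\tilde{G} \to G'\)), and the no-torus hypothesis on \(G'/\bar{G}\) is exactly the hypothesis of that proposition. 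So if the \(\pr{X',G'}\)-structure is translation invariant, Proposition~\ref{proposition:enlarge} produces a translation invariant \(\pr{\tilde{X},\tilde{G}}\)-structure inducing it. It then remains to check that this \(\pr{\tilde{X},\tilde{G}}\)-structure is the one coming from our original \((X,G)\)-structure via \(\tilde{X} \to X\), which follows again from uniqueness of the lift of the developing map, and that a translation invariant \(\pr{\tilde{X},\tilde{G}}\)-structure on \(T\) descends to a translation invariant \((X,G)\)-structure — the extended morphism \(V \to \tilde{G}\) composes with \(\tilde{G} \to G\) to extend \(h \colon V \to G\), giving translation invariance by Lemma~\ref{lemma:algebraic.description}(3).

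The main obstacle I expect is bookkeeping with the two coverings and the deck group: one must be careful that the composite \(\tilde{G} \to G'\) has the stated closed image \(\bar{G}\) and that \(\fundamentalGroup{X}\), viewed inside \(\tilde{G}\), maps into \(\bar{G}\) (it does, being a subgroup of \(\tilde{G}\))—so that Proposition~\ref{proposition:enlarge}'s hypothesis ``holonomy contained in \(\bar{G}\)'' is genuinely met, since \(h'(\Lambda)\) is a product of elements of \(\rho(\tilde G)\) and deck transformations, all of which lie in \(\bar G\). The only other subtlety is to confirm that \(\tilde X \to X'\) restricted to the image of the developing map is a covering map, so that lifts exist; this follows from \(\tilde{G} \to G'\)-equivariance exactly as in the proof of Proposition~\ref{proposition:enlarge}. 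Once these points are settled, the proof is a formal concatenation of the lifting construction of Section~\ref{section:lifting} with Proposition~\ref{proposition:enlarge}.
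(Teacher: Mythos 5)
The paper states this lemma without a proof, offering only the remark that it is ``a slight modification'' of the lifting procedure using Proposition~\ref{proposition:enlarge}; your proposal follows exactly that route. Your construction of the induced \(\pr{X',G'}\)-structure, its uniqueness, and the forward implication (translation invariance of the \((X,G)\)-structure implies translation invariance of the \(\pr{X',G'}\)-structure, by lifting the extended holonomy \(V \to G\) to \(\tilde{G}\) and pushing to \(G'\)) are all fine.

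The gap is in the converse, exactly where you invoke Proposition~\ref{proposition:enlarge}. That proposition requires the holonomy of the \(\pr{X',G'}\)-structure to be contained in \(\bar{G}\), and you verify this by asserting that \(\fundamentalGroup{X}\) ``is a subgroup of \(\tilde{G}\)'' and hence maps into \(\bar{G}\). That is not a hypothesis of the lemma: the lemma only assumes the deck transformations are realized as elements of \(G'\). If the deck group did lie in \(\tilde{G}\), the \((X,G)\)-structure would already be induced by an \(\pr{\tilde{X},\tilde{G}}\)-structure by the first paragraph of section~\ref{section:lifting}, and this lemma would add nothing; its whole purpose (see example~\ref{example:lift.and.extend}, where the deck transformations of quotients of \(B\beta 1\) land in the larger group \(B\beta 2\) rather than in \(\tilde{G}\)) is the case where the image \(D \subset G'\) of the deck group is \emph{not} contained in \(\tilde{G}\). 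In that case the induced holonomy satisfies \(h'(\lambda) \in D\cdot\rho\of{\tilde{G}}\), and \(D\) need not lie in \(\bar{G}\), so the map \(V \to G'/\bar{G}\) in the proof of Proposition~\ref{proposition:enlarge} does not descend to the torus and the proposition cannot be applied verbatim. To close the gap one needs an extra argument — for instance, using that \(D\) centralizes \(\rho\of{\tilde{G}}\) (deck transformations commute with the lifted vector fields), hence normalizes \(\bar{G}\), and running the quotient argument in \(\normalizer{\bar{G}}{G'}/\bar{G}\) after showing \(h'(V)\) lands in that normalizer, or otherwise controlling the deck-group contribution to the holonomy. As written, the implication ``\(\pr{X',G'}\)-structure translation invariant \(\Rightarrow\) \((X,G)\)-structure translation invariant'' is not established.
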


\begin{example}\label{example:lift.and.extend}
Take any complex homogeneous surface \((X,G)\) with universal covering space \(\pr{\tilde{X},\tilde{G}}=B\beta 1\) in the notation of \cite{McKay:2014} (see example~\vref{example:B.beta.1} for the definition).
The inclusion \(B\beta 1 \to B\beta 2\) puts the deck transformations of every quotient \(\pr{\tilde{X},\tilde{G}} \to (X,G)\) into the transformations of a group \(G'\) containing \(\tilde{G}\).
Therefore for all complex homogeneous surfaces \((X,G)\) covered by \(B\beta 1\), every holomorphic \((X,G)\)-structure on any complex torus is translation invariant.
\end{example}

\section{Complex tori of complex dimension 0, 1 or 2}

\begin{theorem}
Every holomorphic locally homogeneous geometric structure on a complex torus of complex dimension 0, 1 or 2 is translation invariant.
\end{theorem}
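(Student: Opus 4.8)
The plan is to reduce the statement to a finite case check handled by the preceding machinery. Dimension $0$ is trivial (a point). Dimension $1$ is exactly Corollary~\ref{corollary:curves}: every complex homogeneous curve is algebraic, so Corollary~\ref{corollary:positive.dim.symmetries} gives a positive-dimensional symmetry group on any elliptic curve, whose identity component consists of translations, hence the structure is translation invariant. So the content is entirely in complex dimension $2$, and there the strategy is to run through the Lie--McKay classification of complex homogeneous surfaces $(X,G)$ \cite{McKay:2014,McKay/Pokrovskiy:2010,McKay:2012} and verify translation invariance of all $(X,G)$-structures on complex $2$-tori model by model.

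The key organizing step is to sort the list of complex homogeneous surfaces into the families already dispatched above, so that nothing is left over. First I would record the reductions: (i) surfaces with $\dim_{\mathbb C} X=\dim_{\mathbb C} G$, i.e. $D1,D1_1,\dots,D1_5,D2,D2_1,\dots,D2_{14}$, handled by Lemma~\ref{lemma:X.is.G}; (ii) nilpotent models $B\beta1,B\beta2$ (and the $D1$-, $D2$-type surfaces again), handled by Theorem~\ref{theorem:nilpotent}; (iii) class $B\beta1$ and everything covered by it (Example~\ref{example:lift.and.extend}); (iv) models admitting a morphism $(X,G)\to(X',G')$ with $G'/G$ carrying no homogeneous complex torus, reducing each such $(X,G)$ to an algebraic $(X',G')$ by Proposition~\ref{proposition:enlarge} (the long list of morphisms $A2\to A1$, $A3\to A1$, $C2\to C7$, etc.); (v) models whose universal cover has deck group inside $\tilde G$ or inside an enlargement $G'$, handled by the lifting lemmas of Section~\ref{section:lifting} (Examples~\ref{example:lifts},~\ref{example:lift.and.extend}); (vi) reductive models $A2,A3,C2,C2',C3,C9,C9',D1,D1_1,\dots,D1_4,D3$, handled by Theorem~\ref{theorem:reductive}; (vii) rational homogeneous surfaces $A1=(\mathbb P^2,\operatorname{PSL}(3,\mathbb C))$ and $C7=(\mathbb P^1\times\mathbb P^1,\operatorname{PSL}(2,\mathbb C)^2)$, handled by the parabolic-geometry result (trivial canonical bundle plus holomorphic parabolic geometry forces a torus cover with translation-invariant structure); (viii) products of a reductive curve with a rational curve, $C5,C5',C6$ (Example~\ref{example:products}); and (ix) models whose reductive part has rank $\ge 2$, namely $A1,A2,B\beta2,B\gamma4,B\delta2,B\delta4,C2,C3,C5,C6,C7,D1$, handled by Lemma~\ref{lemma:Z.nought} via deformation to a torus of algebraic dimension zero and Corollary~\ref{corollary:symmetries.torus}.

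After assembling these, I would go down the master list from \cite{McKay:2014} and check that every surface appears in at least one of the categories above — paying particular attention to the transcendental (non-algebraic) surfaces, where one must exhibit either an enlargement $(X',G')$ that is algebraic, or a lift whose deck transformations land in $\tilde G$ or in such an enlargement, so that Corollary~\ref{corollary:symmetries.torus} or one of the algebraic-model results applies. The $B\gamma$ and $B\delta$ families route through $B\delta4$ and $B\delta2$; the $C$ family routes through $C7$ and $C5'$; the $A$ family through $A1$; the $D$ families are covered directly by Lemma~\ref{lemma:X.is.G} or are reductive. For any residual small surfaces (such as $C8\to A1$, $D3\to A1$, or the $B\beta1B0$, $B\beta2'$ cases) one uses the explicit morphisms already listed in the excerpt. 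I expect the main obstacle to be purely bookkeeping: ensuring the case analysis is genuinely exhaustive over Lie's full classification, with no transcendental surface slipping through without a valid enlargement or lift — this is where one has to be careful that $G'/\bar G$ really contains no homogeneous complex torus (automatic when $G'$ is linear algebraic and $G\to G'$ is algebraic, by Proposition~\ref{proposition:enlarge}) and that the deck group really does embed in $\tilde G$ or $G'$. Once every entry of the classification is accounted for, the theorem follows by combining the cited results.
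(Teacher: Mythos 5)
Your proposal is correct and follows essentially the same route as the paper: dimension $1$ via Corollary~\ref{corollary:positive.dim.symmetries} (every complex homogeneous curve being algebraic), and dimension $2$ by running through the classification of complex homogeneous surfaces and assigning each model to one of the techniques already established (discrete stabilizer, nilpotency, enlargement, lifting, reductive/parabolic Cartan geometry, products, and the rank-$\ge 2$ deformation argument). The paper's own proof is exactly this appeal to the examples between Example~\ref{example:B.beta.1} and Example~\ref{example:lift.and.extend}, so your more explicit bookkeeping of which surface falls under which lemma is, if anything, a fuller account of the same argument.
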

\begin{proof}
Corollary~\vref{corollary:positive.dim.symmetries} covers any complex torus of dimension 1.
The tricks in examples~\ref{example:B.beta.1} to \ref{example:lift.and.extend} prove translation invariance of all \((X,G)\)-structures for all of the complex homogeneous surfaces \((X,G)\), from the classification \cite{McKay:2014}.
\end{proof}

\section{Conclusion}

It seems that our methods are unable to prove the translation invariance of holomorphic solvable \((X,G)\)-structures on complex tori. We conjecture that all  holomorphic  locally  homogeneous  geometric structures on complex tori are translation invariant.


Moreover, suppose that a complex compact manifold $M$ homeomorphic to a torus  admits a holomorphic \((X,G)\)-structure.
Then we conjecture that  $M$  is biholomorphic to  the quotient \(V/\Lambda\) of a complex vector space $V$  by a lattice $\Lambda$. In other words, nonstandard complex structures on real tori do not admit any holomorphic \((X,G)\)-structure and, more generally, do not admit   any holomorphic rigid geometric structure.

\bibliographystyle{amsplain}
\bibliography{toriGeometry}

\end{document}